\def\ker{\operatorname{ker}}
\def\card{\operatorname{card}}
\def\supp{\operatorname{supp}}
\def\rank{\operatorname{rank}}
\def\dim{\operatorname{dim}}
\def\Ran{\operatorname{Ran}}
\def\shift{\operatorname{shift}}
\newtheorem{theorem}{Theorem}[section]
\newtheorem{lemma}[theorem]{Lemma}
\newproof{proof}[theorem]{Proof}
\newtheorem{proposition}[theorem]{Proposition}
\newtheorem{corollary}[theorem]{Corollary}
\newtheorem{problem}[theorem]{Problem}
\newtheorem{example}[theorem]{Example}
\newtheorem{remark}[theorem]{Remark}
\numberwithin{equation}{section}
\begin{document}

\begin{frontmatter}

\title{ Quasinormality of powers of \\ commuting pairs of bounded operators}
\author{Ra\'{u}l E. Curto\footnote{The first named author was partially supported by NSF Grant
DMS-1302666.}}
\address{Department of Mathematics, The University of Iowa, Iowa City, Iowa
52242}
\ead{raul-curto@uiowa.edu}
\ead[url]{http://www.math.uiowa.edu/\symbol{126}rcurto/}

\author{Sang Hoon Lee\footnote{The second author of this paper was partially supported by NRF
(Korea) grant No. 2016R1D1A1B03933776.}}
\address{Department of Mathematics, Chungnam National University, Daejeon,
34134, Republic of Korea}
\ead{slee@cnu.ac.kr}
\author{Jasang Yoon\footnote{The third named author was partially supported by a grant from the
University of Texas System and the Consejo Nacional de Ciencia y Tecnolog%
\'{\i}a de M\'{e}xico (CONACYT).}}
\address{School of Mathematical and Statistical Sciences, The University of
Texas Rio Grande Valley, Edinburg, Texas 78539, USA}
\ead{jasang.yoon@utrgv.edu}

\begin{abstract}
We study jointly quasinormal and spherically quasinormal
pairs of commuting operators on Hilbert space, as well as their powers. \ We first prove that, up to a constant multiple, the only jointly quasinormal $2$-variable weighted shift is the Helton-Howe shift. \ Second, we show that a left invertible subnormal operator $T$ whose square $T^{2}$ is quasinormal must be quasinormal. \ Third, we generalize a characterization of quasinormality for subnormal operators in terms of their normal extensions to the case of commuting subnormal $n$-tuples. \ Fourth, we show that if a $2$-variable weighted shift $W_{\left(\alpha ,\beta \right) }$ and its powers $W_{\left(\alpha ,\beta \right)}^{(2,1)}$ and $W_{\left(\alpha ,\beta \right)}^{(1,2)}$ are all spherically quasinormal, then $W_{\left( \alpha ,\beta \right)}$ may not necessarily be jointly quasinormal. \ Moreover, it is possible for both $W_{\left(\alpha ,\beta \right)}^{(2,1)}$ and $W_{\left(\alpha ,\beta \right)}^{(1,2)}$ to be spherically quasinormal without $W_{\left(\alpha ,\beta \right)}$ being spherically quasinormal. \ Finally, we prove that, for $2$-variable weighted shifts, the common fixed points of the toral and spherical Aluthge transforms are jointly quasinormal.  
\end{abstract}

\begin{keyword}
Aluthge transform, spherically quasinormal pairs, $2$-variable weighted shifts

\textit{2010 Mathematics Subject Classification.} \ Primary 47B20, 47B37, 47A13, 28A50; Secondary 44A60, 47-04,
47A20

\end{keyword}

\end{frontmatter}


\section{\label{Int}Introduction}

Let $\mathcal{H}$ be a complex Hilbert space and let $\mathcal{B}(\mathcal{H})$ denote the algebra of bounded linear operators on $\mathcal{H}$. \ An operator $T\in \mathcal{B}(\mathcal{H})$ is said to be \textit{normal} if $T^{\ast}T=TT^{\ast }$, \textit{quasinormal} if $T$ commutes with $T^{\ast }T$,
i.e., $TT^{\ast }T=T^{\ast }T^{2}$, \textit{subnormal} if $T=N|_{\mathcal{H}%
} $, where $N$ is normal and $N(\mathcal{H})\subseteq \mathcal{H}$, and
\textit{hyponormal} if $T^{\ast }T\geq TT^{\ast }$.\ It is well known that 
\begin{equation*}
\begin{tabular}{l}
normal $\Longrightarrow $ quasinormal $\Longrightarrow $ subnormal $%
\Longrightarrow $ hyponormal.
\end{tabular}
\end{equation*}
For $S,T\in \mathcal{B}(\mathcal{H})$ let $[S,T]:=ST-TS$. \ We say that an
$n$-tuple $\mathbf{T}=(T_{1},\cdots ,T_{n})$ of operators on $\mathcal{H}$
is (jointly) \textit{hyponormal} if the operator matrix
\begin{equation*}
\lbrack \mathbf{T}^{\ast },\mathbf{T]:=}\left(
\begin{array}{llll}
\lbrack T_{1}^{\ast },T_{1}] & [T_{2}^{\ast },T_{1}] & \cdots & [T_{n}^{\ast
},T_{1}] \\
\lbrack T_{1}^{\ast },T_{2}] & [T_{2}^{\ast },T_{2}] & \cdots & [T_{n}^{\ast
},T_{2}] \\
\text{ \thinspace \thinspace \quad }\vdots & \text{ \thinspace \thinspace
\quad }\vdots & \ddots & \text{ \thinspace \thinspace \quad }\vdots \\
\lbrack T_{1}^{\ast },T_{n}] & [T_{2}^{\ast },T_{n}] & \cdots & [T_{n}^{\ast
},T_{n}]%
\end{array}%
\right)
\end{equation*}%
is positive on the direct sum of $n$ copies of $\mathcal{H}$ (cf. \cite{Ath}%
, \cite{bridge}, \cite{CMX}). \ The $n$-tuple $\mathbf{T}$ is said to be
\textit{normal} if $\mathbf{T}$ is commuting and each $T_{i}$ is normal, and
\textit{subnormal }if $\mathbf{T}$ is the restriction of a normal $n$-tuple
to a common invariant subspace. \ For $i,j,k\in \left\{ 1,2,\ldots
,n\right\} $, $\mathbf{T}$ is called \textit{matricially quasinormal} if
each $T_{i}$ commutes with each $T_{j}^{\ast }T_{k}$, $\mathbf{T}$ is
(jointly) \textit{quasinormal} if each $T_{i}$ commutes with each $%
T_{j}^{\ast }T_{j}$, and \textit{spherically quasinormal} if each $T_{i}$
commutes with $\sum_{j=1}^{n}T_{j}^{\ast }T_{j}$. \ As shown in \cite{AtPo} and \cite{Gle}, we have
\begin{equation}
\begin{tabular}{l}
normal $\Longrightarrow $ matricially quasinormal $\Longrightarrow $
(jointly) quasinormal \\
$\Longrightarrow $ spherically quasinormal $\Longrightarrow $ subnormal.%
\end{tabular}
\label{implication}
\end{equation}%
On the other hand, the results in \cite{CuYo6} and \cite{Gle} show that
the inverse implications in (\ref{implication}) do not hold.

It is well known that the only quasinormal $1$-variable weighted
shift is, up to a constant multiple, the (unweighted) unilateral shift $U_{+}:=\shift(1,1,\cdots )$. \ One of the aims of this paper is to show that there is a clear distinction between quasinormality in the single
operator case and spherical quasinormality for commuting pairs of operators. \ To describe our
results, we first need to introduce some notation and terminology. \ 
First, we consider the polar decomposition and Aluthge transforms for commuting
pairs $\mathbf{T}=(T_{1},T_{2})$. \ The reader will notice at once that results for pairs can be 
readily generalized to the case of commuting $n$-tuples of operators. \ 

For $T\in \mathcal{B}(\mathcal{H})$, the \textit{%
canonical polar decomposition} of $T$ is $T \equiv V|T|$ (with $\ker V = \ker T$) and the \textit{Aluthge transform} $%
\widetilde{T}$ of $T$ is $\widetilde{T}:=|T|^{\frac{1}{2}}U|T|^{\frac{1}{2}}$%
. \ The Aluthge transform was first introduced in \cite{Alu} and it has
attracted considerable attention over the last two decades (see, for instance, \cite{Ando},
\cite{CJL}, \cite{DySc}, \cite{JKP}, \cite{JKP2}, \cite{LLY} and \cite{Yam}). \ Roughly speaking, the idea behind the Aluthge transform is to
convert an operator into another operator which shares with the first one
many spectral properties, but which is closer to being a normal operator. \
For $T_{1},T_{2}\in \mathcal{B}(\mathcal{H})$, consider the pair $T=\left(
\begin{array}{c}
T_{1} \\
T_{2}%
\end{array}%
\right) $ as an operator from $\mathcal{H}$ into $\mathcal{H}\bigoplus
\mathcal{H}$, that is,
\begin{equation}
T=\left(
\begin{array}{c}
T_{1} \\
T_{2}%
\end{array}%
\right) :\mathcal{H}\rightarrow
\begin{tabular}{l}
$\mathcal{H}$ \\
$\bigoplus $ \\
$\mathcal{H}$%
\end{tabular}%
.  \label{setting 1}
\end{equation}%
Then, we have $T^{\ast }T=(T_{1}^{\ast },T_{2}^{\ast
})\left(
\begin{array}{c}
T_{1} \\
T_{2}%
\end{array}%
\right) =T_{1}^{\ast }T_{1}+T_{2}^{\ast }T_{2}$, so that we can define a
polar decomposition of $T\equiv \left(
\begin{array}{c}
T_{1} \\
T_{2}%
\end{array}%
\right) $ as follows:
\begin{equation}
T=\left(
\begin{array}{c}
T_{1} \\
T_{2}%
\end{array}%
\right) =\left(
\begin{array}{c}
V_{1} \\
V_{2}%
\end{array}%
\right) P=\left(
\begin{array}{c}
V_{1}P \\
V_{2}P%
\end{array}%
\right) \text{,}  \label{setting 2}
\end{equation}%
where $P=\sqrt{T_{1}^{\ast }T_{1}+T_{2}^{\ast }T_{2}}$. \ We then have $%
R:=(V_{1}^{\ast },V_{2}^{\ast })\left(
\begin{array}{c}
V_{1} \\
V_{2}%
\end{array}%
\right) =I$ on%
\begin{eqnarray*}
\left( \ker T\right) ^{\perp } &=&\left( \ker T_{1}\cap \ker
T_{2}\right) ^{\perp }=\left( \ker P\right) ^{\perp } \\
&=&\left( \ker \left(
\begin{array}{c}
V_{1} \\
V_{2}%
\end{array}%
\right) \right) ^{\perp }=\left( \ker V_{1}\cap \ker V_{2}\right) ^{\perp } .
\end{eqnarray*}%

For $\mathbf{T}\equiv (T_{1},T_{2})$, it is now natural to define the \textit{spherical }Aluthge transform of $\mathbf{T}$ as 

\begin{equation}
\widehat{\mathbf{T}}\equiv \widehat{\mathbf{(}T_{1},T_{2})}:=\left( \sqrt{P}V_{1}\sqrt{P},\sqrt{P}V_{2}\sqrt{P}\right) \text{ (cf. \cite{CuYo7}, \cite{CuYo6}, \cite{KiYo1}).}
\label{Def-Alu1}
\end{equation}
For a commuting pair of operators $\mathbf{T}=(T_{1},T_{2})$, we can also define
the \textit{toral} Aluthge transform by taking Aluthge transforms coordinate-wise:
\begin{equation*}
\widetilde{\mathbf{T}}\equiv \widetilde{\mathbf{(}T_{1},T_{2})}:=(\widetilde{%
T}_{1},\widetilde{T}_{2})\equiv (|T_{1}|^{\frac{1}{2}}U_{1}|T_{1}|^{\frac{1}{%
2}},|T_{2}|^{\frac{1}{2}}U_{2}|T_{2}|^{\frac{1}{2}})\text{ (cf. \cite{CuYo7}%
, \cite{CuYo6}, \cite{KiYo1}).}
\end{equation*}

Next, we recall the class of unilateral weighted shifts. \ For $\alpha \equiv \{\alpha
_{n}\}_{n=0}^{\infty }$ a bounded sequence of positive real numbers (called
\textit{weights}), let 
$$
W_{\alpha }\equiv \shift(\alpha _{0},\alpha_{1},\cdots ):\ell ^{2}(\mathbb{Z}_{+})\rightarrow \ell ^{2}(\mathbb{Z}_{+})
$$
be the associated unilateral weighted shift, defined by $W_{\alpha
}e_{n}:=\alpha _{n}e_{n+1}\;$(all $n\geq 0$), where $\{e_{n}\}_{n=0}^{\infty
}$ is the canonical orthonormal basis in $\ell ^{2}(\mathbb{Z}_{+})$. \ The
moments of $\alpha \equiv \{\alpha _{n}\}_{n=0}^{\infty }$ are given as
\begin{equation*}
\gamma _{k}\equiv \gamma _{k}(W_{\alpha }):=\left\{
\begin{tabular}{ll}
$1$, & $\text{if }k=0$ \\
$\alpha _{0}^{2}\cdots \alpha _{k-1}^{2}$, & $\text{if }k>0.$%
\end{tabular}%
\right.
\end{equation*}%
Similarly, consider double-indexed positive bounded sequences $\alpha _{%
\mathbf{k}},\beta _{\mathbf{k}}\in \ell ^{\infty }(\mathbb{Z}_{+}^{2})$, $%
\mathbf{k}\equiv (k_{1},k_{2})\in \mathbb{Z}_{+}^{2}$ and let $\ell ^{2}(%
\mathbb{Z}_{+}^{2})$\ be the Hilbert space of square-summable complex
sequences indexed by $\mathbb{Z}_{+}^{2}$. \ (Recall that $\ell ^{2}(\mathbb{%
Z}_{+}^{2})$ is canonically isometrically isomorphic to $\ell ^{2}(\mathbb{Z}%
_{+})\bigotimes \ell ^{2}(\mathbb{Z}_{+})$.) \ We define the $2$%
-variable weighted shift $W_{(\alpha ,\beta )}\equiv (T_{1},T_{2})$\ by%
\begin{equation*}
T_{1}e_{\mathbf{k}}:=\alpha _{\mathbf{k}}e_{\mathbf{k+}\varepsilon _{1}}
\end{equation*}%
\begin{equation*}
T_{2}e_{\mathbf{k}}:=\beta _{\mathbf{k}}e_{\mathbf{k+}\varepsilon _{2}},
\end{equation*}%
where $\mathbf{\varepsilon }_{1}:=(1,0)$ and $\mathbf{\varepsilon }%
_{2}:=(0,1)$. \ Clearly,
\begin{equation}
T_{1}T_{2}=T_{2}T_{1}\Longleftrightarrow \beta _{\mathbf{k+}\varepsilon
_{1}}\alpha _{\mathbf{k}}=\alpha _{\mathbf{k+}\varepsilon _{2}}\beta _{%
\mathbf{k}}\;\left( \text{all }\mathbf{k}\in \mathbb{Z}_{+}^{2}\right) .
\label{commuting}
\end{equation}%
From now on, we will consider only commuting $2$-variable weighted shifts. \ For basic properties for $2$-variable weighted shift $%
W_{(\alpha ,\beta )}$, we refer to \cite{CLY1} and \cite{CuYo1}.

Given $\mathbf{k}\equiv (k_{1},k_{2})\in \mathbb{Z}_{+}^{2}$, the moments of
$(\alpha ,\beta )$ of order $\mathbf{k}$ are
\begin{equation}
\gamma _{\mathbf{k}}\equiv \gamma _{\mathbf{k}}(W_{(\alpha ,\beta )}):=%
\begin{cases}
1, & \text{if }k_{1}=0\text{ and }k_{2}=0 \\
\alpha _{(0,0)}^{2}\cdots \alpha _{(k_{1}-1,0)}^{2}, & \text{if }k_{1}\geq 1%
\text{ and }k_{2}=0 \\
\beta _{(0,0)}^{2}\cdots \beta _{(0,k_{2}-1)}^{2}, & \text{if }k_{1}=0\text{
and }k_{2}\geq 1 \\
\alpha _{(0,0)}^{2}\cdots \alpha _{(k_{1}-1,0)}^{2}\beta
_{(k_{1},0)}^{2}\cdots \beta _{(k_{1},k_{2}-1)}^{2}, & \text{if }k_{1}\geq 1%
\text{ and }k_{2}\geq 1.%
\end{cases}
\label{moment0}
\end{equation}%
We remark that, due to the commutativity condition (\ref{commuting}), $%
\gamma _{\mathbf{k}}$ can be computed using any nondecreasing path from $%
(0,0)$ to $(k_{1},k_{2})$. \ We now recall a well known characterization of
subnormality for multivariable weighted shifts \cite{JeLu}, due to C. Berger
(cf. \cite[III.8.16]{Con}) and independently established by R. Gellar and
L.J. Wallen \cite{GeWa} in the single variable case: $\ W_{(\alpha ,\beta )}$
admits a commuting normal extension if and only if there is a probability
measure $\mu $ (which we call the \textit{Berger measure} of $W_{(\alpha
,\beta )}$) defined on the $2$-dimensional rectangle $R=[0,a_{1}]\times
\lbrack 0,a_{2}]$ $\left( \text{where }a_{i}:=\left\Vert T_{i}\right\Vert
^{2}\right) $ such that%
\begin{equation}
\gamma _{\mathbf{k}}(W_{(\alpha ,\beta )})=\int_{R}s^{k_{1}}t^{k_{2}}d\mu
(s,t)\text{, for all }\mathbf{k}\in \mathbb{Z}_{+}^{2}\text{ (called \textit{%
Berger's theorem}).}  \label{Berger Theorem}
\end{equation}%
For $0<a<1$ we let $S_{a}:=\shift(a,1,1,\cdots )$. \
Observe that $U_{+}$ and $S_{a}$ are subnormal, with Berger measures $\delta
_{1}$ and $(1-a^{2})\delta _{0}+a^{2}\delta _{1}$, respectively, where $%
\delta _{p}$ denotes the point-mass probability measure with support in the
singleton set $\{p\}$. \ We denote the class of subnormal pairs by $%
\mathfrak{H}_{\infty }$ and the class of commuting pairs of subnormal
operators by $\mathfrak{H}_{0}$; clearly, $\mathfrak{H}_{\infty } \subseteq \mathfrak{H}_{0}$.

Motivated in part by the results \cite{CLY1} and \cite{CLY2}, we now study some properties of matricially, jointly, and
spherically quasinormal pairs of commuting operators, including the following questions.

\begin{problem}
\label{Problem 1}Let $T$ be a subnormal operator, and assume that $T^{2}$ is quasinormal. \ Does it follow that $T$ is
quasinormal?
\end{problem}

\begin{problem}
\label{Problem 2}(i) \ Let $\mathbf{T}\equiv (T_{1},T_{2})\in \mathfrak{H}%
_{0} $ be spherically quasinormal, and let $m$ and $n$ be two positive integers. \ Is $\mathbf{T}^{\left( m,n\right)
}\equiv (T_{1}^{m},T_{2}^{n})$ spherically quasinormal?\newline
(ii) \ If $\mathbf{T}$ and $\mathbf{T}^{\left( 1,2\right) }\equiv
(T_{1},T_{2}^{2})$ are both spherically quasinormal, is $\mathbf{T}$
(jointly) quasinormal?\newline
(iii) \ If $\mathbf{T}^{\left( 2,1\right) }\equiv (T_{1}^{2},T_{2})$ and $\mathbf{T}^{\left( 1,2\right) }\equiv (T_{1},T_{2}^{2})$ are both spherically quasinormal, is $\mathbf{T}$ spherically quasinormal?
\end{problem}

\begin{problem}
\label{Problem 3}Let $\mathbf{T} \in \mathfrak{H}_{0}$, and assume that $\widehat{\mathbf{T}}=\mathbf{T}=\widetilde{\mathbf{T}}$. \ Is $\mathbf{T}$ (jointly) quasinormal?
\end{problem}

A subnormal operator is said to be {\it pure} if it has no nonzero normal orthogonal summands; that is, if there exists no nonzero subspace $\mathcal{M}$ of $\mathcal{H}$ invariant under $T$ such
that $T|_{\mathcal{M}}$ is normal, where $T|_{\mathcal{M}}$ denotes the
restriction of $T$ to $\mathcal{M}$. \ Since quasinormal operators are always subnormal, it makes sense to speak of pure quasinormal operators. \ In 1953, A. Brown obtained a characterization of pure quasinormal operators in terms of a tensor product of a unilateral shift $U$ (of finite or infinite multiplicity) and a positive operator $A$. \ Concretely, his result states that every pure quasinormal operator $T\in \mathcal{B}(\mathcal{H})$ is unitarily equivalent to $U_+\otimes A$ acting on $\ell^2(\mathbb{Z}_+)\otimes \mathcal{R}$, where $A\geq 0$ with $\ker A=\left\{0\right\}$; of course, $U = U_+ \otimes I_{\mathcal{R}}$, so that the multiplicity of $U$ is $\dim \mathcal{R}$ (\cite{Bro}, \cite[Theorem 3.2]{Con}; cf. Lemma \ref{Lemma 5}). \ Hence, it is natural to ask:

\begin{problem}
\label{Problem 4}Let $\mathbf{T}\equiv (T_{1},T_{2})$ be a
commuting pair of operators. \ Assume that $\mathbf{T}$ is spherically quasinormal and that $\mathbf{T}$ is pure (i.e., no nonzero normal orthogonal summands). \ Do there exist a (joint) isometry $\mathbf{U}%
=\left( U_{1},U_{2}\right) $ and a positive operator $P\geq 0$ such that $\mathbf{T}$ is unitarily equivalent to $\mathbf{U}\otimes P$?
\end{problem}

In Theorem \ref{spherically quasinormal criterion} we will obtain a characterization of quasinormality for 
subnormal $n$-tuples, which we believe is the right multivariable analogue of the key step in Brown's proof in the $1$-variable case.

We conclude this section by recording a result proved in \cite{CuYo8}. 

\begin{proposition}
\label{Prop1}(\cite[Theorem 2.2]{CuYo8}) \ For $\mathbf{T}\equiv (T_{1},T_{2})\in \mathfrak{H}_{0}$, the
following are equivalent:\newline
(i) $\mathbf{T}$ is spherically quasinormal;\newline
(ii) $\widehat{\mathbf{T}}=\mathbf{T}$.
\end{proposition}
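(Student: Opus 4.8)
The plan is to prove both implications at once by showing that (i) and (ii) are each equivalent to the single, transparent condition that, for $i=1,2$, the partial isometry $V_i$ of the joint polar decomposition (\ref{setting 2}) commutes with $P=\sqrt{T_1^{\ast}T_1+T_2^{\ast}T_2}$. Since spherical quasinormality of $\mathbf{T}$ means exactly that each $T_i$ commutes with $P^2=T_1^{\ast}T_1+T_2^{\ast}T_2$, while $\widehat{T_i}=\sqrt{P}\,V_i\sqrt{P}$ and $T_i=V_iP$ by (\ref{Def-Alu1}) and (\ref{setting 2}), this common reformulation will deliver the equivalence immediately.

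First I would record two preliminary facts. From the displayed kernel computation preceding (\ref{Def-Alu1}) we have $\ker V_1\cap\ker V_2=\ker P$, so $\ker P\subseteq\ker V_i$ and hence each $V_i$ annihilates $\ker P$. Next, since $P\ge 0$ is self-adjoint, $f(P)$ lies in the bicommutant $\{P\}''$ for every bounded Borel $f$; therefore, for any bounded operator $S$, the three statements $SP=PS$, $SP^2=P^2S$, and $S\sqrt{P}=\sqrt{P}\,S$ are mutually equivalent. This lets me move freely among $P^2$, $P$, and $\sqrt{P}$.

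The algebraic heart is a cancellation step. Using $T_i=V_iP$, one checks that $T_iP=PT_i$ is equivalent to $(V_iP-PV_i)P=0$, i.e. $[V_i,P]$ vanishes on $\Ran P$; since $\overline{\Ran P}=(\ker P)^{\perp}$ and $[V_i,P]$ is bounded, $[V_i,P]=0$ on $(\ker P)^{\perp}$, and it also vanishes on $\ker P$ (where both $V_i$ and $P$ are $0$), so $V_iP=PV_i$. The same device treats (ii): rewriting $\widehat{T_i}=T_i$ as $\sqrt{P}\,V_i\sqrt{P}=V_iP$ and rearranging gives $[\sqrt{P},V_i]\sqrt{P}=0$, whence $V_i$ commutes with $\sqrt{P}$ (using $\overline{\Ran\sqrt{P}}=(\ker P)^{\perp}$ and $V_i|_{\ker P}=0$), and conversely if $V_i$ commutes with $\sqrt{P}$ then $\widehat{T_i}=\sqrt{P}\,V_i\sqrt{P}=V_i\sqrt{P}\sqrt{P}=V_iP=T_i$.

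Chaining these equivalences yields: $\mathbf{T}$ spherically quasinormal $\Leftrightarrow$ each $T_i$ commutes with $P^2$ $\Leftrightarrow$ each $T_i$ commutes with $P$ $\Leftrightarrow$ each $V_i$ commutes with $P$ $\Leftrightarrow$ each $V_i$ commutes with $\sqrt{P}$ $\Leftrightarrow$ $\widehat{T_i}=T_i$ for $i=1,2$, i.e. $\widehat{\mathbf{T}}=\mathbf{T}$. The step demanding the most care is the cancellation argument with its bookkeeping on $\ker P$: one must use that each individual $V_i$ (not merely the column $\binom{V_1}{V_2}$) kills $\ker P$, and that density of $\Ran P$ (resp. $\Ran\sqrt{P}$) in $(\ker P)^{\perp}$ genuinely upgrades ``vanishes on the range'' to ``vanishes identically.'' The functional-calculus passages are routine once self-adjointness of $P$ is invoked.
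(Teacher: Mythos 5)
Your argument is correct. The paper itself gives no proof of this proposition --- it is simply quoted from \cite[Theorem 2.2]{CuYo8} --- so there is nothing internal to compare against; but your chain of equivalences is a complete and valid self-contained proof. The two points that need care are exactly the ones you flag: (a) passing between $P^2$, $P$, and $\sqrt{P}$ via the fact that Borel functions of a positive operator lie in its bicommutant, and (b) upgrading ``$[V_i,P]$ (resp.\ $[V_i,\sqrt{P}]$) annihilates $\Ran P$'' to ``$[V_i,P]=0$'' by combining density of $\Ran P$ in $(\ker P)^{\perp}$ with the fact that each individual $V_i$ kills $\ker P=\ker V_1\cap\ker V_2$. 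Both are handled correctly. As a side remark, your proof never uses that $\mathbf{T}\in\mathfrak{H}_{0}$ (nor even that $T_1$ and $T_2$ commute); the equivalence of spherical quasinormality with fixedness under the spherical Aluthge transform is purely a statement about the joint polar decomposition $T_i=V_iP$, so the subnormality hypothesis in the statement is inherited from the context of \cite{CuYo8} rather than needed for this particular equivalence.
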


\setlength{\unitlength}{1mm} \psset{unit=1mm}
\begin{figure}[th]
\begin{center}
\begin{picture}(135,70)

\psline{->}(20,20)(70,20)
\psline(20,35)(68,35)
\psline(20,50)(68,50)
\psline(20,65)(68,65)
\psline{->}(20,20)(20,70)
\psline(35,20)(35,68)
\psline(50,20)(50,68)
\psline(65,20)(65,68)

\put(12,16){\footnotesize{$(0,0)$}}
\put(31.5,16){\footnotesize{$(1,0)$}}
\put(46.5,16){\footnotesize{$(2,0)$}}
\put(61.5,16){\footnotesize{$(3,0)$}}

\put(25,21){\footnotesize{$\alpha_{(0,0)}$}}
\put(40,21){\footnotesize{$\alpha_{(1,0)}$}}
\put(55,21){\footnotesize{$\alpha_{(2,0)}$}}
\put(66,21){\footnotesize{$\cdots$}}

\put(25,36){\footnotesize{$\alpha_{(0,1)}$}}
\put(40,36){\footnotesize{$\alpha_{(1,1)}$}}
\put(55,36){\footnotesize{$\alpha_{(2,1)}$}}
\put(66,36){\footnotesize{$\cdots$}}

\put(25,51){\footnotesize{$\alpha_{(0,2)}$}}
\put(40,51){\footnotesize{$\alpha_{(1,2)}$}}
\put(55,51){\footnotesize{$\alpha_{(2,2)}$}}
\put(66,51){\footnotesize{$\cdots$}}

\put(26,66){\footnotesize{$\cdots$}}
\put(41,66){\footnotesize{$\cdots$}}
\put(56,66){\footnotesize{$\cdots$}}
\put(66,66){\footnotesize{$\cdots$}}

\psline{->}(35,14)(50,14)
\put(42,10){$\rm{T}_1$}
\psline{->}(10,35)(10,50)
\put(4,42){$\rm{T}_2$}

\put(11,34){\footnotesize{$(0,1)$}}
\put(11,49){\footnotesize{$(0,2)$}}
\put(11,64){\footnotesize{$(0,3)$}}

\put(20,26){\footnotesize{$\beta_{(0,0)}$}}
\put(20,41){\footnotesize{$\beta_{(0,1)}$}}
\put(20,56){\footnotesize{$\beta_{(0,2)}$}}
\put(21,66){\footnotesize{$\vdots$}}

\put(35,26){\footnotesize{$\beta_{(1,0)}$}}
\put(35,41){\footnotesize{$\beta_{(1,1)}$}}
\put(35,56){\footnotesize{$\beta_{(1,2)}$}}
\put(36,66){\footnotesize{$\vdots$}}

\put(50,26){\footnotesize{$\beta_{(2,0)}$}}
\put(50,41){\footnotesize{$\beta_{(2,1)}$}}
\put(50,56){\footnotesize{$\beta_{(2,2)}$}}
\put(51,66){\footnotesize{$\vdots$}}

\put(10,6){(i)}


\put(85,8){(ii)}

\psline{->}(95,14)(110,14)
\put(102,10){$\rm{T}_1$}
\psline{->}(77,35)(77,50)
\put(72,42){$\rm{T}_2$}

\psline{->}(80,20)(130,20)
\psline(80,35)(128,35)
\psline(80,50)(128,50)
\psline(80,65)(128,65)

\psline{->}(80,20)(80,70)
\psline(95,20)(95,68)
\psline(110,20)(110,68)
\psline(125,20)(125,68)

\put(75,16){\footnotesize{$(0,0)$}}
\put(91,16){\footnotesize{$(1,0)$}}
\put(106,16){\footnotesize{$(2,0)$}}
\put(121,16){\footnotesize{$(3,0)$}}

\put(85,22){\footnotesize{$\sqrt{\frac{2}{3}}$}}
\put(100,22){\footnotesize{$\sqrt{\frac{5}{6}}$}}
\put(115,22){\footnotesize{$\sqrt{\frac{14}{15}}$}}
\put(126,21){\footnotesize{$\cdots$}}

\put(85,37){\footnotesize{$\sqrt{\frac{1}{3}}$}}
\put(100,37){\footnotesize{$\sqrt{\frac{1}{3}}$}}
\put(115,37){\footnotesize{$\sqrt{\frac{1}{3}}$}}
\put(126,36){\footnotesize{$\cdots$}}

\put(85,52){\footnotesize{$\sqrt{\frac{1}{3}}$}}
\put(100,52){\footnotesize{$\sqrt{\frac{1}{3}}$}}
\put(115,52){\footnotesize{$\sqrt{\frac{1}{3}}$}}
\put(126,51){\footnotesize{$\cdots$}}

\put(85,66){\footnotesize{$\cdots$}}
\put(100,66){\footnotesize{$\cdots$}}
\put(115,66){\footnotesize{$\cdots$}}
\put(126,66){\footnotesize{$\cdots$}}

\put(80,26){\footnotesize{$\sqrt{\frac{1}{3}}$}}
\put(80,41){\footnotesize{$\sqrt{\frac{2}{3}}$}}
\put(80,56){\footnotesize{$\sqrt{\frac{2}{3}}$}}
\put(81,66){\footnotesize{$\vdots$}}

\put(95,26){\footnotesize{$\sqrt{\frac{1}{6}}$}}
\put(95,41){\footnotesize{$\sqrt{\frac{2}{3}}$}}
\put(95,56){\footnotesize{$\sqrt{\frac{2}{3}}$}}
\put(96,66){\footnotesize{$\vdots$}}

\put(110,26){\footnotesize{$\sqrt{\frac{1}{15}}$}}
\put(110,41){\footnotesize{$\sqrt{\frac{2}{3}}$}}
\put(110,56){\footnotesize{$\sqrt{\frac{2}{3}}$}}
\put(111,66){\footnotesize{$\vdots$}}
\end{picture}
\end{center}
\caption{Weight diagram of a generic 2-variable weighted shift and weight
diagram of the 2-variable weighted shift in Example \protect\ref{ex1},
respectively.}
\label{Figure1}
\end{figure}
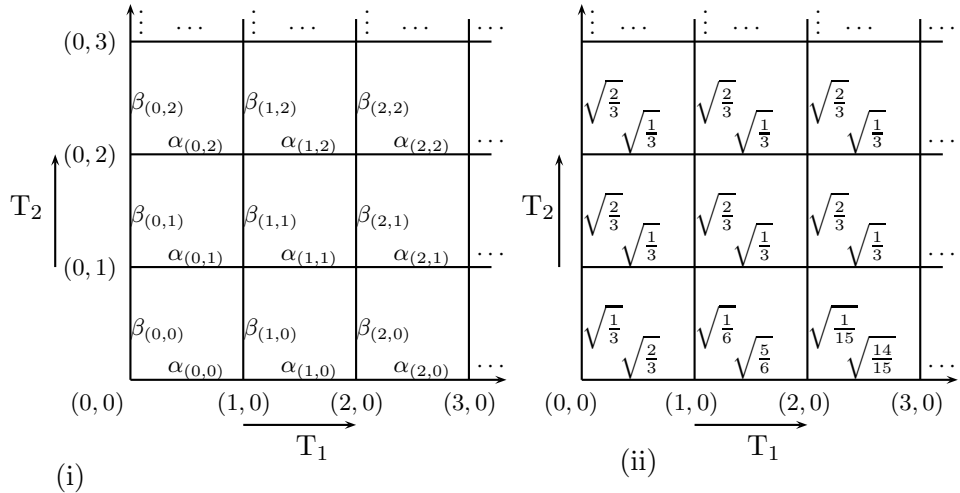




\section{\label{Quasinormal powers}Subnormal Pairs with Spherically Quasinormal Powers}

We first turn our attention to powers of quasinormal operators. \ For $T\in \mathcal{B}(\mathcal{H})$, it is well known that the
hyponormality of $T$ does not imply the hyponormality of $T^{2}$ \cite{Hal2}. \ However, for a unilateral weighted shift $W_{\alpha }$, the hyponormality
of $W_{\alpha }$ (detected by the condition $\alpha _{k}\leq \alpha _{k+1}$
for all $k\geq 0$) clearly implies the hyponormality of every power $%
W_{\alpha }^{m}\;(m\geq 1)$. \ It is also well known that the subnormality of $%
T$ implies the subnormality of $T^{m}$ $(m\geq 2)$. \ The converse
implication, however, is false. \ In fact, the subnormality of all powers $%
T^{m}\;(m\geq 2)$ does not necessarily imply the subnormality of $T$, even
if $T\equiv W_{\omega }$ is a unilateral weighted shift; e.g., for $0<a<b<1$ the square of $\shift (a,b,1,1,\cdots)$ is subnormal (\cite{Hal2}, \cite{JeLu}, \cite{Shi}, \cite{Sta2}). \ Lemma \ref{Lemma 5} says that if $T\in
\mathcal{B}(\mathcal{H})$ is (pure) quasinormal, then $T=U\left\vert
T\right\vert \cong U_{+}\otimes A$ acting on $\ell^2(\mathbb{Z}_+)
\otimes \mathcal{R}$, where $A\geq 0$ with $\mathrm{\ker }A=\left\{ 0\right\} $, $U\cong U_{+}\otimes
I_{\mathcal{R}}$, and $\left\vert T\right\vert \cong I_{\ell^2(\mathbb{Z}_+)}\otimes
A $. \ Hence, for $m\geq
1$
\begin{equation}
\begin{tabular}{l}
$T^{m}\cong U_{+}^{m}\otimes A^{m}\text{ and }\left( T^{\ast }\right)
^{m}\cong \left( U_{+}^{\ast }\right) ^{m}\otimes A^{m}\text{,}$%
\end{tabular}
\label{powers1}
\end{equation}%
so that
\begin{equation}
\begin{tabular}{l}
$\left( T^{\ast }\right) ^{m}T^{m}\cong I\otimes A^{2m}$.
\end{tabular}
\label{powers}
\end{equation}%
Therefore, the commutator $\left[ T^{m},\left( T^{\ast }\right) ^{m}T^{m}\right]$ is equal to $0$; that is, 
$T^{m}$ is quasinormal for all $m\geq 1$. \ In view of this, it is natural to ask: If $%
T^{2}$ is (pure) quasinormal, is $T$ (pure) quasinormal? \ Without a restriction on $T$, it is easy to answer this question in the negative. \ For, let $T$ be a nonzero nilpotent operator of
order two, that is, $T^{2}=0$. \ Then, $T^{2}$ is quasinormal, but $T$ is
not quasinormal. \ Of course, such an operator cannot be subnormal. \ Let us assume then that $T$ is subnormal.

The following problem is a refinement of Problem \ref{Problem 1}.

\begin{problem}
\label{P1}Let $T$ be subnormal and assume that $T^{2}$ is (pure) quasinormal. \ Is it true that $T$ is (pure) quasinormal?
\end{problem}
We now provide an almost complete answer to Problem \ref{P1}. \ First, we need an auxiliary lemma, of independent interest. \ It is similar in spirit to \cite[Lemma 3.1]{Con}. 

\begin{lemma} \label{keylemma}
Let $T$ be a subnormal operator on $\mathcal{H}$, with normal extension $N$ acting on $\mathcal{K} \supseteq \mathcal{H}$, and write 
\begin{equation*}
N=\left(
\begin{array}{cc}
T & A \\
0 & B^{\ast }%
\end{array}%
\right) \text{.}
\end{equation*}
Then $T$ is quasinormal if and only if $A^{\ast }T=0$.
\end{lemma}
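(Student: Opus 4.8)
The plan is to extract everything from the single normality identity $N^{\ast}N=NN^{\ast}$, read against the $2\times 2$ block decomposition of $\mathcal{K}=\mathcal{H}\oplus(\mathcal{K}\ominus\mathcal{H})$ (here the $(2,1)$ corner of $N$ vanishes precisely because $\mathcal{H}$ is $N$-invariant). Writing $N^{\ast}=\left(\begin{smallmatrix} T^{\ast} & 0 \\ A^{\ast} & B\end{smallmatrix}\right)$ and multiplying out, I would first compute only the $(1,1)$ corners of $N^{\ast}N$ and of $NN^{\ast}$. These are $T^{\ast}T$ and $TT^{\ast}+AA^{\ast}$, respectively, so normality of $N$ forces
\begin{equation*}
T^{\ast}T-TT^{\ast}=AA^{\ast}.
\end{equation*}
In other words, the self-commutator of $T$ is exactly $AA^{\ast}$, and this is the only consequence of normality that the argument will use.

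Next I would rewrite quasinormality in a form that meshes with this identity. By definition $T$ is quasinormal if and only if $T^{\ast}T^{2}=TT^{\ast}T$, which is the same as $(T^{\ast}T-TT^{\ast})T=0$. Substituting the corner identity above, this says precisely that $AA^{\ast}T=0$. Thus quasinormality of $T$ is equivalent to $AA^{\ast}T=0$, and it remains only to pass between $AA^{\ast}T=0$ and $A^{\ast}T=0$.

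That last step is the routine positivity observation: $A^{\ast}T=0$ trivially implies $AA^{\ast}T=0$, and conversely, for every $x\in\mathcal{H}$,
\begin{equation*}
\|A^{\ast}Tx\|^{2}=\langle AA^{\ast}Tx,Tx\rangle =0,
\end{equation*}
so $AA^{\ast}T=0$ forces $A^{\ast}T=0$. Chaining the equivalences yields that $T$ is quasinormal if and only if $A^{\ast}T=0$.

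I do not expect a serious obstacle here; the entire content is the recognition that the self-commutator formula $T^{\ast}T-TT^{\ast}=AA^{\ast}$, coming from just the $(1,1)$ corner of the normality relation, converts the quasinormality identity $T^{\ast}T^{2}=TT^{\ast}T$ into the factored form $(T^{\ast}T-TT^{\ast})T=AA^{\ast}T=0$. The only points requiring (minor) care are keeping the block bookkeeping consistent and the standard positivity argument that lets one strip the leading factor $A$ from $AA^{\ast}T$.
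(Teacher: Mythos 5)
Your proof is correct and follows essentially the same route as the paper: both reduce quasinormality to $AA^{\ast}T=0$ via the $(1,1)$ corner of a normality relation and then strip the factor $A$ using $\ker AA^{\ast}=\ker A^{\ast}$. The only (cosmetic) difference is that you extract the identity $T^{\ast}TT=TT^{\ast}T+AA^{\ast}T$ by multiplying the corner of $N^{\ast}N=NN^{\ast}$ on the right by $T$, whereas the paper reads it off directly from the $(1,1)$ entries of $N^{\ast}NN$ and $NN^{\ast}N$.
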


\begin{proof}
We calculate
\begin{equation*}
NN^{\ast }N=\left(
\begin{array}{cc}
TT^{\ast }T+AA^{\ast }T & TT^{\ast }A+A\left( A^{\ast }A+BB^{\ast }\right)
\\
B^{\ast }A^{\ast }T & B^{\ast }\left( A^{\ast }A+BB^{\ast }\right)%
\end{array}%
\right)
\end{equation*}%
and
\begin{equation*}
N^{\ast }NN=\left(
\begin{array}{cc}
T^{\ast }TT & T^{\ast }TA+T^{\ast }AB^{\ast } \\
A^{\ast }TT & A^{\ast }TA+\left(A^{\ast}A+BB^{\ast }\right) B^{\ast}
\end{array}%
\right)
\end{equation*}%
Since $NN^{\ast }N=NN^{\ast }N$, we have
\begin{equation*}
TT^{\ast }T+AA^{\ast }T=T^{\ast }TT.
\end{equation*}%
Then the commutator of $T$ and $T^{\ast }T$ is $\left[ T,T^{\ast }T\right] =-AA^{\ast }T$. \ It follows that $T$ is quasinormal if and only if $AA^{\ast }T=0$ if and only if $\Ran T\subseteq
\ker AA^{\ast }=\ker A^{\ast }$ if and only if $A^{\ast
}T=0$. \ This completes the proof.
\end{proof}

\begin{theorem}
\label{Theorem 1}Let $T\in \mathcal{B}(\mathcal{H})$ be subnormal and assume that $T^{2}$
is quasinormal. \ If $T$ is bounded below (i.e., left invertible), then $T$ is quasinormal.
\end{theorem}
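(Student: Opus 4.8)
The plan is to bypass the block computation of Lemma~\ref{keylemma} and instead read off quasinormality of both $T$ and $T^{2}$ from a single positive operator attached to a normal extension. The first step is purely algebraic. Since $T$ is subnormal it is hyponormal, so $D_{T}:=T^{\ast}T-TT^{\ast}\ge 0$, and the identity $T^{\ast 2}T^{2}-(T^{\ast}T)^{2}=T^{\ast}D_{T}T$ exhibits $T^{\ast 2}T^{2}-(T^{\ast}T)^{2}$ as a positive operator that vanishes exactly when $D_{T}^{1/2}T=0$, i.e. when $D_{T}T=0$; since $[T,T^{\ast}T]=-D_{T}T$, this happens precisely when $T$ is quasinormal. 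Applying the same reasoning to the (subnormal, hence hyponormal) operator $T^{2}$ shows that $T^{2}$ is quasinormal if and only if $T^{\ast 4}T^{4}=(T^{\ast 2}T^{2})^{2}$. Thus both the hypothesis and the desired conclusion are recast as equalities of the shape $S^{\ast 2}S^{2}=(S^{\ast}S)^{2}$.

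Next I would bring in the normal extension. Let $N$ on $\mathcal{K}\supseteq\mathcal{H}$ be a normal extension of $T$, as in Lemma~\ref{keylemma}, let $P_{\mathcal{H}}$ be the orthogonal projection of $\mathcal{K}$ onto $\mathcal{H}$, and set $M:=N^{\ast}N\ge 0$. Because $\mathcal{H}$ is invariant under $N$, one has $T^{\ast}T=P_{\mathcal{H}}M|_{\mathcal{H}}$ and $T^{\ast 2}T^{2}=P_{\mathcal{H}}M^{2}|_{\mathcal{H}}$, using $N^{\ast 2}N^{2}=(N^{\ast}N)^{2}$ for normal $N$; hence for $h\in\mathcal{H}$
\begin{equation*}
\big\langle (T^{\ast 2}T^{2}-(T^{\ast}T)^{2})h,h\big\rangle=\|Mh\|^{2}-\|P_{\mathcal{H}}Mh\|^{2}=\|(I-P_{\mathcal{H}})Mh\|^{2}.
\end{equation*}
Consequently $T$ is quasinormal if and only if $M\mathcal{H}\subseteq\mathcal{H}$. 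Since $N^{2}$ is a normal extension of $T^{2}$ with $(N^{2})^{\ast}(N^{2})=M^{2}$, the identical computation (with $N^{2}$ in place of $N$) shows that $T^{2}$ is quasinormal if and only if $M^{2}\mathcal{H}\subseteq\mathcal{H}$.

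The crux is the elementary but decisive implication $M^{2}\mathcal{H}\subseteq\mathcal{H}\Rightarrow M\mathcal{H}\subseteq\mathcal{H}$ for the positive operator $M$. Iterating the hypothesis gives $M^{2n}\mathcal{H}\subseteq\mathcal{H}$ for all $n$, so $p(M^{2})\mathcal{H}\subseteq\mathcal{H}$ for every polynomial $p$; approximating $t\mapsto\sqrt{t}$ uniformly by polynomials on $\sigma(M^{2})\subseteq[0,\|M\|^{2}]$ and passing to the norm limit in the continuous functional calculus yields that $M=(M^{2})^{1/2}$ leaves the closed subspace $\mathcal{H}$ invariant. Chaining the three steps gives $T^{2}$ quasinormal $\Rightarrow M^{2}\mathcal{H}\subseteq\mathcal{H}\Rightarrow M\mathcal{H}\subseteq\mathcal{H}\Rightarrow T$ quasinormal, which is the assertion. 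I expect the square-root step to be the only genuinely delicate point. I would also note that this argument never invokes left-invertibility, so it in fact delivers the conclusion for every subnormal $T$ whose square is quasinormal; the hypothesis that $T$ is bounded below would be needed only in a more computational approach built directly on Lemma~\ref{keylemma}, where $T^{2}$ quasinormal reads $(TA+AB^{\ast})^{\ast}T^{2}=0$ and one seeks to reduce this to $A^{\ast}T=0$ by exploiting the injectivity of $T$.
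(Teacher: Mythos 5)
Your argument is correct, and it takes a genuinely different --- and ultimately stronger --- route than the paper's. Every step checks out: the identity $T^{\ast 2}T^{2}-(T^{\ast}T)^{2}=T^{\ast}(T^{\ast}T-TT^{\ast})T$ together with hyponormality gives the equivalence of quasinormality with $T^{\ast 2}T^{2}=(T^{\ast}T)^{2}$; the computation $\langle (T^{\ast 2}T^{2}-(T^{\ast}T)^{2})h,h\rangle =\|(I-P_{\mathcal{H}})Mh\|^{2}$ shows that $T$ is quasinormal if and only if $M\mathcal{H}\subseteq \mathcal{H}$, which is exactly the coordinate-free form of Lemma \ref{keylemma} (the $(2,1)$ entry of $N^{\ast}N$ is $A^{\ast}T$); and the square-root step is sound --- in fact easier than you suggest, since a closed subspace invariant under a self-adjoint operator is automatically reducing, so $M^{2}\mathcal{H}\subseteq \mathcal{H}$ gives $M^{2}=S_{1}\oplus S_{2}$ relative to $\mathcal{H}\oplus \mathcal{H}^{\perp}$ and hence $M=\sqrt{S_{1}}\oplus \sqrt{S_{2}}$ leaves $\mathcal{H}$ invariant. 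The paper instead works inside the block matrix of $N$: Lemma \ref{keylemma} turns quasinormality of $T^{2}$ into $(TA+AB^{\ast})^{\ast}T^{2}=0$, the normality relations reduce this to $(R_{T^{\ast}T}+L_{B^{\ast}B})(A^{\ast}T)=0$, and the spectral theory of elementary operators, via $\sigma _{\ell }(R_{T^{\ast}T}+L_{B^{\ast}B})\subseteq \sigma _{r}(T^{\ast}T)+\sigma _{\ell }(B^{\ast}B)\subseteq \mathbb{R}_{+}$, shows that this elementary operator is injective unless $0\in \sigma _{\ell }(T)$; that is precisely where left invertibility enters. Your proof never invokes that hypothesis, so it resolves Problem \ref{Problem 1} in full generality: a subnormal operator with quasinormal square is quasinormal. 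This stronger statement is indeed true --- it is the $n=2$ case of the later theorem of Pietrzycki and Stochel that subnormal $n$th roots of quasinormal operators are quasinormal --- and your argument, with $M^{n}$ in place of $M^{2}$, extends verbatim to all $n$. What the paper's computational approach buys is an explicit description of the obstruction, namely $A^{\ast}T\in \ker (R_{T^{\ast}T}+L_{B^{\ast}B})$, together with spectral machinery reused elsewhere; what yours buys is the removal of the hypothesis.
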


\begin{proof}
Since $T\in \mathcal{B}(\mathcal{H})$ is subnormal, we consider a normal
extension $N$ of $T$ such that
\begin{equation*}
N=\left(
\begin{array}{cc}
T & A \\
0 & B^{\ast }%
\end{array}%
\right) \text{.}
\end{equation*}
Assume that $T^{2}$ is quasinormal, and consider the matricial representation of $N^{2}$, that is,
\begin{equation*}
N^{2}=\left(
\begin{array}{cc}
T^{2} & TA+AB^{\ast } \\
0 & B^{\ast 2}%
\end{array}%
\right) \text{.}
\end{equation*}%
From Lemma \ref{keylemma}, we know that $T^{2}$ is quasinormal if and only if $\left(
TA+AB^{\ast }\right) ^{\ast }T^{2}=0$ if and only if $A^{\ast }T^{\ast
}T^{2}+BA^{\ast }T^{2}=0$. \ Recall that $N^{\ast }N=NN^{\ast }$, i.e.,%
\begin{equation*}
\left\{
\begin{array}{c}
T^{\ast }T=TT^{\ast }+AA^{\ast } \\
A^{\ast }T=B^{\ast }A^{\ast } \\
A^{\ast }A+BB^{\ast }=B^{\ast }B.%
\end{array}%
\right .
\end{equation*}%
We thus have
\begin{eqnarray*}
A^{\ast }T^{\ast }TT &=&A^{\ast }\left( TT^{\ast }+AA^{\ast }\right) T \\
&=&A^{\ast }TT^{\ast }T+A^{\ast }AA^{\ast }T
\end{eqnarray*}%
and
\begin{equation*}
BA^{\ast }TT=B\left( A^{\ast }T\right) T=BB^{\ast }A^{\ast }T.
\end{equation*}%
Since we are assuming that $T^{2}$ is quasinormal, the previous calculation reveals that 
\begin{eqnarray*}
0 &=&A^{\ast }T^{\ast }T^{2}+BA^{\ast }T^{2} \\
&=&A^{\ast }TT^{\ast }T+A^{\ast }AA^{\ast }T+BB^{\ast }A^{\ast }T \\
&=&A^{\ast }TT^{\ast }T+\left( A^{\ast }A+BB^{\ast }\right) A^{\ast }T \\
&=&\left( A^{\ast }T\right) T^{\ast }T+B^{\ast }B\left( A^{\ast }T\right) \\
&=&\left( R_{T^{\ast }T}+L_{B^{\ast }B}\right) \left( A^{\ast }T\right) ,
\end{eqnarray*}%
where $R_{X}$ and $L_{X}$ denote the right and left multiplication operators by $X$ acting on $%
\mathcal{B}(\mathcal{H})$. \ It follows that $A^{\ast }T\in \ker \left( R_{T^{\ast }T}+L_{B^{\ast }B}\right) $. \ However, by the spectral
mapping theorem for the left spectrum $\sigma _{\ell } $ \cite{Appl}, we
have
\begin{equation*}
\sigma _{\ell }\left( R_{T^{\ast }T}+L_{B^{\ast }B}\right) =\left\{ \lambda
+\mu :\left( \lambda ,\mu \right) \in \sigma _{\ell }\left( R_{T^{\ast
}T},L_{B^{\ast }B}\right) \right\} \text{.}
\end{equation*}
If $0\in \sigma _{\ell }\left( R_{T^{\ast }T}+L_{B^{\ast }B}\right) $, then $%
0=\lambda +\mu $ for some $\left( \lambda ,\mu \right) \in \sigma _{\ell
}\left( R_{T^{\ast }T},L_{B^{\ast }B}\right) $. \ But
\begin{equation*}
\sigma _{\ell }\left( R_{T^{\ast }T},L_{B^{\ast }B}\right) =\sigma
_{r}\left( T^{\ast }T\right) \times \sigma _{\ell }\left( B^{\ast }B\right)
\subseteq \mathbb{R}_{+}\times \mathbb{R}_{+} \; \textrm{(\cite{Cu3}, \cite{CuFi}).} 
\end{equation*}%
Thus, if $0\in \sigma _{\ell }\left( R_{T^{\ast }T}+L_{B^{\ast }B}\right) $,
then $0=\lambda +\mu $, with $\lambda,\mu \in \mathbb{R}_+$. \ It follows that $\lambda =\mu =0$. \ This
implies that $0\in \sigma _{r}\left( T^{\ast }T\right)$ and $0\in \sigma
_{\ell }\left( B^{\ast }B\right) $, and therefore $0\in \sigma \left( T^{\ast
}T\right)$ and $0\in \sigma \left( B^{\ast }B\right) $. \ Therefore, we
have $0\in \sigma _{\ell }\left( T\right) \cap \sigma _{\ell }\left(
B\right) $, that is, neither $T$ nor $B$ is bounded below. \ It follows that, under the assumption that $T$ is bounded below, the operator $R_{T^*T}+L_{B*B}$ is injective, and then $A^*T=0$, as desired.
\end{proof}

\begin{corollary}
\label{Cor3}Let $T\in \mathcal{B}(\mathcal{H})$ be subnormal and $T^{2}$ be
pure quasinormal. \ If $T$ is bounded below, then $T$ is pure
quasinormal.
\end{corollary}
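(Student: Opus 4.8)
The plan is to reduce the statement to Theorem \ref{Theorem 1} together with an elementary observation relating the purity of a (quasinormal) operator to the purity of its powers. Since $T$ is subnormal, $T^{2}$ is quasinormal, and $T$ is bounded below, Theorem \ref{Theorem 1} already guarantees that $T$ is quasinormal. Thus the only additional point to establish is that $T$ is \emph{pure}; once this is shown, the phrase \emph{pure quasinormal} applies to $T$ without ambiguity, and the two halves combine to give the conclusion.

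To prove purity I would argue by contraposition, showing that if $T$ fails to be pure then so does $T^{2}$. Suppose there were a nonzero subspace $\mathcal{M}$ invariant under $T$ with $T|_{\mathcal{M}}$ normal. Since $\mathcal{M}$ is $T$-invariant it is automatically $T^{2}$-invariant, and for every $x\in \mathcal{M}$ we have $T^{2}x=T(Tx)$ with $Tx\in \mathcal{M}$, so that $T^{2}|_{\mathcal{M}}=(T|_{\mathcal{M}})^{2}$. A power of a normal operator is again normal, hence $T^{2}|_{\mathcal{M}}$ is normal on the nonzero subspace $\mathcal{M}$. This contradicts the assumption that $T^{2}$ is pure, and therefore $T$ must be pure. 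Combining this with the quasinormality supplied by Theorem \ref{Theorem 1} shows $T$ is pure quasinormal.

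I do not expect a serious obstacle here: the substantive content is carried entirely by Theorem \ref{Theorem 1}, while the purity half is a soft, algebraic transfer argument. The only point requiring a moment of care is the bookkeeping around the definition of purity, namely verifying that a candidate normal summand for $T$ genuinely produces a normal summand for $T^{2}$ (which it does, since invariance under $T$ passes to $T^{2}$ and the restriction squares correctly). One may additionally remark that for subnormal operators an invariant subspace on which the restriction is normal is in fact reducing, so that the two standard formulations of purity coincide in this setting; this remark is reassuring but not logically needed for the argument above.
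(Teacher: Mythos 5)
Your proposal is correct and follows essentially the same route as the paper: quasinormality of $T$ comes from Theorem \ref{Theorem 1}, and purity is obtained by the same contrapositive observation that a nonzero normal restriction $T|_{\mathcal{M}}$ yields the normal restriction $T^{2}|_{\mathcal{M}}=(T|_{\mathcal{M}})^{2}$, contradicting the purity of $T^{2}$. Your closing remark about invariant-versus-reducing subspaces for subnormal operators is a reasonable clarification but, as you note, not needed.
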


\begin{proof}
Quasinormality is clear from Theorem \ref{Theorem 1}. \ If $T$ is not pure,
then there is a nonzero reducing subspace $\mathcal{M}$ of $\mathcal{H}$ such such
that $T|_{\mathcal{M}}$ is normal, where $T|_{\mathcal{M}}$ denotes the
restriction of $T$ to $\mathcal{M}$. \ Since $T^{2}|_{\mathcal{M}}$
is also normal, $T^{2}$ is not pure, which is a contradiction. \ Therefore, $T$ is pure.
\end{proof}

\begin{remark}
As described in Lemma \ref{Lemma 5}, a pure quasinormal operator $T$ is, up to unitary equivalence, of the form
$T=U_{+} \otimes A$, where $A \ge 0$ acts on $\mathcal{R}$. \ Then $T^2= U_{+}^2\otimes A^2$. \ It follows from Corollary \ref{Cor3} that a bounded below subnormal operator $T$ whose square is pure quasinormal must be of the form $U_{+} \otimes A$, where $\dim \mathcal{R}$ is either {\it infinite} or {\it finite and even}.
\end{remark}

The multivariable analogues of Theorem \ref
{Theorem 1} are highly nontrivial. \ Thus, at present Problem \ref{Problem 4} is rather challenging.

We now present a multivariable analogue of a key step in Brown's characterization of quasinormality within the class of subnormal operators. \ We believe Theorem \ref{spherically quasinormal criterion} below can be used to solve Problem \ref{Problem 4}. \ Although we present our results for commuting pairs of operators, the reader will easily see that the same statements work well for commuting $n$-tuples of operators, when $n>2$. 

Let $\mathbf{T} \equiv (T_1,T_2)$ be a (commuting) subnormal pair of operators on a Hilbert space $\mathcal{H})$, and let $\mathbf{N} \equiv (N_1,N_2)$ be a normal extension of $\mathbf{T}$ acting on $\mathcal{K} \supseteq \mathcal{H}$. \ For $i=1,2$, write
\begin{equation} 
N_i=\left(
\begin{array}{cc}
T_i & A_i \\
0 & B_i^{ \ast } \label{normal extension}
\end{array}%
\right) \text{.}
\end{equation}
Then
\begin{equation} \label{N1eq}
N_1^*N_1+N_2^*N_2=\left(
\begin{array}{cc} 
T_1^*T_1+T_2^*T_2 & T_1^*A_1+T_2^*A_2 \\
A_1^*T_1+A_2^*T_2 & A_1^*A_1+B_1B_1^*+A_2^*A_2+B_2B_2^*
\end{array}%
\right) \text{.}
\end{equation}%
As usual, let $P:=\sqrt{T_1^*T_1+T_2^*T_2}$.

\begin{remark} \label{rem1}
From Lemma \ref{keylemma}, we know that if $T_1$ and $T_2$ are quasinormal then $A_1^*T_1=A_2^*T_2=0$. \ For $i=1,2$, it follows that
\begin{eqnarray}
0&=&N_i(N_1^*N_1+N_2^*N_2)-(N_1^*N_1+N_2^*N_2)N_i \\
&=&
\left(
\begin{array}{cc}
T_i & A_i \\
0 & B_i^{\ast }
\end{array}
\right)
\left(
\begin{array}{cc}
P^2 & 0 \\
0 & *
\end{array}
\right)-
\left(
\begin{array}{cc}
P^2 & 0 \\
0 & *
\end{array}
\right)
\left(
\begin{array}{cc}
T_i & A_i \\
0 & B_i^{\ast }
\end{array}
\right)
\\
&=&
\left(
\begin{array}{cc}
T_iP^2-P^2T_i & * \\
0 & *
\end{array}
\right) .
\end{eqnarray}
It follows that $(T_1,T_2)$ is spherically quasinormal. \ In short, if $\mathbf{T}$ is subnormal and each of $T_1$ and $T_2$ is quasinormal, then $\mathbf{T}$ is spherically quasinormal.
\end{remark}

\begin{corollary}
Let $\mathbf{T}$ be subnormal, and assume that $T_i$ is bounded below and $T_i^2$ is quasinormal $(i=1,2)$. \ Then $\mathbf{T}$ is spherically quasinormal.
\end{corollary}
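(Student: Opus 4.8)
The plan is to reduce this two-variable assertion to the single-variable Theorem \ref{Theorem 1}, applied in each coordinate separately, and then to close the argument with Remark \ref{rem1}. The first point I would verify is that each component $T_i$ is, by itself, a subnormal operator. Since $\mathbf{T} \equiv (T_1,T_2)$ is a subnormal pair, there is a commuting normal extension $\mathbf{N} \equiv (N_1,N_2)$ acting on some $\mathcal{K} \supseteq \mathcal{H}$, with $\mathcal{H}$ invariant under both $N_1$ and $N_2$. In particular, $\mathcal{H}$ is invariant under each $N_i$ taken separately, so $T_i = N_i|_{\mathcal{H}}$ is the restriction of the normal operator $N_i$ to an invariant subspace; hence each $T_i$ is subnormal in the single-operator sense, and the block representation \eqref{normal extension} is precisely a normal extension of $T_i$ in the form demanded by Theorem \ref{Theorem 1}.

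With that reduction in hand, the hypotheses of Theorem \ref{Theorem 1} are met in each coordinate: $T_i$ is subnormal, $T_i^2$ is quasinormal, and $T_i$ is bounded below. Theorem \ref{Theorem 1} then yields that $T_1$ and $T_2$ are each quasinormal. Finally, since $\mathbf{T}$ is subnormal and both $T_1$ and $T_2$ are quasinormal, Remark \ref{rem1} gives precisely that $\mathbf{T}$ is spherically quasinormal, which is the desired conclusion.

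The argument is short because the real work has already been carried out in Theorem \ref{Theorem 1} and Remark \ref{rem1}; the only point requiring a moment's care is the reduction in the first paragraph, namely that joint subnormality of the pair forces each $T_i$ to be individually subnormal, with the normal extension of the pair restricting to a legitimate normal extension of each $T_i$. I do not anticipate a serious obstacle here, since the left-invertibility hypothesis on each $T_i$ is exactly what Theorem \ref{Theorem 1} consumes: it guarantees that $0 \notin \sigma_{\ell}(T_i)$, hence the injectivity of $R_{T_i^*T_i} + L_{B_i^*B_i}$ and thus $A_i^*T_i = 0$, which is the quasinormality of $T_i$ by Lemma \ref{keylemma}.
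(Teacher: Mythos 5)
Your argument is correct and coincides with the paper's own proof, which simply cites Theorem \ref{Theorem 1} and Remark \ref{rem1}; the extra observation that each $T_i$ is individually subnormal via restriction of $N_i$ is a standard point the paper leaves implicit.
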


\begin{proof}
Straightforward from Theorem \ref{Theorem 1} and Remark \ref{rem1}.
\end{proof}

\begin{theorem} \label{spherically quasinormal criterion}
Let $\mathbf{T}$ be subnormal, with normal extension $\mathbf{N}$. \ Then $\mathbf{T}$ is spherically quasinormal if and only if $A_1^*T_1+A_2^*T_2=0$.
\end{theorem}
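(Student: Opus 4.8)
The plan is to adapt the single-variable argument of Lemma~\ref{keylemma}, with the spherical operator $P^2=T_1^*T_1+T_2^*T_2$ playing the role of $T^*T$, and to exploit the fact that the normal extension $\mathbf{N}$ is itself spherically quasinormal. I set $Q:=N_1^*N_1+N_2^*N_2$ and $C:=A_1^*T_1+A_2^*T_2$, so that $C^*=T_1^*A_1+T_2^*A_2$ and, by \eqref{N1eq},
\[
Q=\left(
\begin{array}{cc}
P^2 & C^* \\
C & A_1^*A_1+B_1B_1^*+A_2^*A_2+B_2B_2^*
\end{array}
\right).
\]
Since $\mathbf{N}$ is a commuting normal pair, it is in particular spherically quasinormal (cf.\ \eqref{implication}); hence $N_i$ commutes with $Q$ for $i=1,2$.

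First I would read off the $(1,1)$ block of the identity $N_iQ-QN_i=0$. Using the block form $N_i=\left(\begin{smallmatrix} T_i & A_i \\ 0 & B_i^* \end{smallmatrix}\right)$, the $(1,1)$ entries of $N_iQ$ and $QN_i$ are $T_iP^2+A_iC$ and $P^2T_i$, respectively, so that
\[
[T_i,P^2]=T_iP^2-P^2T_i=-A_iC\qquad(i=1,2).
\]
Because $\mathbf{T}$ is spherically quasinormal exactly when $[T_i,P^2]=0$ for $i=1,2$, this identity shows that $\mathbf{T}$ is spherically quasinormal if and only if $A_1C=A_2C=0$.

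The crux is then to verify that the pair of conditions $A_1C=A_2C=0$ is equivalent to the single condition $C=0$. One implication is immediate. For the converse, I would multiply each identity $A_iC=0$ on the left by $T_i^*$ and add, obtaining
\[
C^*C=(T_1^*A_1+T_2^*A_2)C=T_1^*(A_1C)+T_2^*(A_2C)=0,
\]
since $C^*=T_1^*A_1+T_2^*A_2$. Thus $C^*C=0$, which forces $C=0$, completing the proof.

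I expect the only genuinely non-mechanical step to be this last one: recognizing that the two separate conditions $A_1C=0$ and $A_2C=0$ collapse to $C=0$ via the identity $C^*C=\sum_i T_i^*(A_iC)$. Everything else is block-matrix bookkeeping already rehearsed in Lemma~\ref{keylemma} and Remark~\ref{rem1}, and the argument goes through verbatim for commuting $n$-tuples upon setting $C=\sum_{i=1}^n A_i^*T_i$.
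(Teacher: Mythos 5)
Your proof is correct and follows essentially the same route as the paper's: both arguments extract $[T_i,P^2]=-A_iC$ from the $(1,1)$ block of $[N_i,\,N_1^*N_1+N_2^*N_2]=0$ (using that a commuting normal pair is spherically quasinormal), reduce the statement to $A_1C=A_2C=0$, and then collapse these two conditions to $C=0$ by a positivity argument. The only cosmetic difference is in that last step: the paper applies $\ker ZZ^*=\ker Z^*$ to the column operator $Z=\left(\begin{smallmatrix}A_1\\A_2\end{smallmatrix}\right)$ acting on $\left(\begin{smallmatrix}T_1\\T_2\end{smallmatrix}\right)x$, while you left-multiply by $T_i^*$ and sum to obtain $C^*C=0$ --- which is the identical computation $C^*C=W^*ZZ^*W$ with $W=\left(\begin{smallmatrix}T_1\\T_2\end{smallmatrix}\right)$.
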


\begin{proof}
Using (\ref{N1eq}) and the calculation in Remark \ref{rem1}, and looking at the $(1,1)$ entry of $N_i(N_1^*N_1+N_2^*N_2)-(N_1^*N_1+N_2^*N_2)N_i \; (i=1,2)$, it is clear that 
$$
[T_i,P^2]=-A_i(A_1^*T_1+A_2^*T_2).
$$
It follows that $(T_1,T_2)$ is spherically quasinormal if and only if $A_i(A_1^*T_1+A_2^*T_2)=0$ for $i=1,2$. \ Consider now the equation
\begin{eqnarray} \label{eq200}
\left(
\begin{array}{c}
A_1 \\
A_2
\end{array}
\right)
\left(
\begin{array}{cc}
A_1^* & A_2^*
\end{array}
\right)
\left(
\begin{array}{c}
T_1 \\
T_2
\end{array}
\right)
=
\left(
\begin{array}{c}
0 \\
0
\end{array}
\right) .
\end{eqnarray}
Since $\ker ZZ^*=\ker Z^*$ for all operators $Z$, we see that (\ref{eq200}) is equivalent to 
\begin{eqnarray*} 
\left(
\begin{array}{cc}
A_1^* & A_2^*
\end{array}
\right)
\left(
\begin{array}{c}
T_1 \\
T_2
\end{array}
\right)
=
\left(
\begin{array}{c}
0 \\
0
\end{array}
\right) ,
\end{eqnarray*}
and this is equivalent to $A_1^*T_1+A_2^*T_2=0$, as desired.
\end{proof}

We now consider the case of joint quasinormality for a subnormal pair, and how the normal extension detects it. \ Recall that a commuting pair $\mathbf{T}$ is (jointly) quasinormal if $T_i$ commutes with $T_j^*T_j$ for all $i,j=1,2$. \ Also, recall the Fuglede-Putnam Theorem \cite[12.5]{ConwayOTbook}: if $N$ and $M$ are normal operators and $T$ is an operator such that $NT=TM$, then $N^*T=TM^*$. 

\begin{theorem}
\label{Cor4}Let $\mathbf{T}\equiv (T_{1},T_{2})$ be a subnormal pair, with normal extension $\mathbf{N}$. \ Then $\mathbf{T}$ is (jointly) quasinormal if and only if $A_{i}^{\ast }T_{j}=0$ $\left( i,j=1,2\right) $.
\end{theorem}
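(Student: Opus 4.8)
The plan is to follow the same two-sided block-matrix bookkeeping used in Lemma~\ref{keylemma} and Theorem~\ref{spherically quasinormal criterion}, but now extracting the individual commutators $[T_i, T_j^*T_j]$ rather than the single spherical combination. First I would compute $N_j^*N_j$ as a $2\times 2$ block matrix, whose $(1,1)$ entry is $T_j^*T_j$ and whose $(1,2)$ entry is $T_j^*A_j$. Then, exactly as in Remark~\ref{rem1}, I would form $N_i(N_j^*N_j)-(N_j^*N_j)N_i$ and read off its $(1,1)$ block. The computation should yield
\begin{equation*}
[T_i, T_j^*T_j] = -A_i A_j^* T_j,
\end{equation*}
so that joint quasinormality, meaning $[T_i, T_j^*T_j]=0$ for all $i,j=1,2$, is equivalent to $A_i A_j^* T_j = 0$ for all $i,j$.

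Next I would peel off the $A_i$ factor. For each fixed $j$, the four conditions $A_i A_j^* T_j=0$ $(i=1,2)$ say precisely that the range of $A_j^*T_j$ lies in $\ker A_1 \cap \ker A_2$. Stacking $A_1,A_2$ into the column operator $\binom{A_1}{A_2}$ and invoking the identity $\ker ZZ^*=\ker Z^*$ (used already in Theorem~\ref{spherically quasinormal criterion}), the joint vanishing $A_1(A_j^*T_j)=A_2(A_j^*T_j)=0$ collapses to $A_j^*T_j=0$. Carrying this out for $j=1$ and $j=2$ gives $A_1^*T_1=0$ and $A_2^*T_2=0$; these are exactly the $i=j$ cases of the asserted conclusion.

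The genuinely new ingredient is obtaining the off-diagonal relations $A_1^*T_2=0$ and $A_2^*T_1=0$, and this is where I expect the main obstacle to lie. The diagonal conditions $A_j^*T_j=0$ alone do not obviously control the cross terms, so I would bring in the Fuglede--Putnam theorem, which the excerpt has just recalled for precisely this purpose. The idea is that commutativity of the normal extension, $N_1N_2=N_2N_1$, is an intertwining relation $N_1 N_2 = N_2 N_1$ to which Fuglede--Putnam applies, yielding $N_1^*N_2=N_2 N_1^*$ (and symmetrically $N_2^*N_1=N_1N_2^*$). Writing these operator equations in block form and comparing the $(1,2)$ or $(2,1)$ entries should produce relations tying $A_1^*T_2$ to $A_2^*T_1$ and to the already-vanishing diagonal terms $A_j^*T_j$. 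Concretely, I anticipate that the off-diagonal entry of $N_1^*N_2 - N_2N_1^*=0$ reads as a combination such as $T_1^*A_2 - A_1 B_2^* + (\text{terms involving }A_j^*T_j)$, which, once the diagonal conditions are inserted, forces $A_i^*T_j=0$ for $i\neq j$. If the direct block comparison is stubborn, an alternative is to apply Fuglede--Putnam to the restriction and the fact that each $N_i$ is normal together with $[N_1,N_2]=0$, so that the full von Neumann algebra generated by $N_1,N_2,N_1^*,N_2^*$ is abelian; the commutant structure then makes each $A_i^*T_j$ accessible. The converse direction is immediate: if all $A_i^*T_j=0$ then every $A_iA_j^*T_j=0$, so each $[T_i,T_j^*T_j]=0$ and $\mathbf{T}$ is jointly quasinormal.
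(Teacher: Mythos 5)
Your first two steps are sound and coincide with the paper's computation: from $N_iN_j^*N_j=N_j^*N_jN_i$ (Fuglede--Putnam plus commutativity) the $(1,1)$ block gives $[T_i,T_j^*T_j]=-A_iA_j^*T_j$, so joint quasinormality is equivalent to $A_iA_j^*T_j=0$ for all $i,j$, and the converse direction of the theorem is then immediate. One small repair: your ``stacking'' argument only shows $\Ran (A_j^*T_j)\subseteq \ker A_1\cap \ker A_2$, which does not by itself force $A_j^*T_j=0$; the correct (and simpler) route is to take $i=j$ and use $\ker A_jA_j^*=\ker A_j^*$, which gives $A_j^*T_j=0$ directly.

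The genuine gap is exactly where you suspected it: the off-diagonal relations $A_1^*T_2=0$ and $A_2^*T_1=0$. The Fuglede--Putnam block comparison you sketch will not produce them, and no argument can, because that implication fails. Take $T_1=U_+$ and $T_2=I$ on $\ell^2(\mathbb{Z}_+)$, with normal extension $N_1=U$ (the bilateral shift) and $N_2=I$ on $\ell^2(\mathbb{Z})$. This pair is jointly quasinormal in the sense defined in the paper ($U_+$ commutes with $U_+^*U_+=I$ and with $I^*I=I$), yet $A_1A_1^*=[T_1^*,T_1]\neq 0$, so $A_1^*T_2=A_1^*\neq 0$. What joint quasinormality actually yields from your identity is only the diagonal conditions $A_1^*T_1=A_2^*T_2=0$, i.e., by Lemma \ref{keylemma}, that each $T_j$ is quasinormal. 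For comparison, the paper's own proof reaches the off-diagonal conditions by asserting that $[T_i,T_i^*T_k]=0$ for all $i,k$ under joint quasinormality; but for $i\neq k$ commutativity gives $[T_i,T_i^*T_k]=-[T_i^*,T_i]T_k$, and this vanishing is a matricial-type requirement not contained in the definition of joint quasinormality. So your instinct that the cross terms are not controlled by the hypotheses was correct, and the honest conclusion of your line of argument is the weaker (and true) statement that $\mathbf{T}$ is jointly quasinormal if and only if $A_1^*T_1=A_2^*T_2=0$.
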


\begin{proof}
As in (\ref{normal extension}), write 
\begin{equation*}
\left( N_{1},N_{2}\right) =\left( \left(
\begin{array}{cc}
T_{1} & A_{1} \\
0 & B_{1}^{\ast }%
\end{array}%
\right) ,\left(
\begin{array}{cc}
T_{2} & A_{2} \\
0 & B_{2}^{\ast }%
\end{array}%
\right) \right) \text{.}
\end{equation*}%
For $i,j\in \left\{ 1,2\right\} $, $N_{i}N_{j}=N_{j}N_{i}$ and $N_{i}$ is
normal. \ By the Fuglede-Putnam Theorem we have
\begin{equation}
N_{i}N_{j}^{\ast }=N_{j}^{\ast }N_{i}.  \label{com}
\end{equation}%
For $i,j,k\in \left\{ 1,2\right\} $, we calculate
\begin{equation*}
N_{i}N_{j}^{\ast }N_{k}=\left(
\begin{array}{cc}
T_{i}T_{j}^{\ast }T_{k}+A_{i}A_{j}^{\ast }T_{k} & T_{i}T_{j}^{\ast
}A_{k}+A_{i}\left( A_{j}^{\ast }A_{k}+B_{j}B_{k}^{\ast }\right)  \\
B_{i}^{\ast }A_{j}^{\ast }T_{k} & B_{i}^{\ast }\left( A_{j}^{\ast
}A_{k}+B_{j}B_{k}^{\ast }\right)
\end{array}%
\right)
\end{equation*}%
and
\begin{equation*}
N_{j}^{\ast }N_{k}N_{i}=\left(
\begin{array}{cc}
T_{j}^{\ast }T_{k}T_{i} & T_{j}^{\ast }T_{k}A_{i}+T_{j}^{\ast}A_{k}B_{i}^{\ast } \\
A_{j}^{\ast }T_{k}T_{i} & A_{j}^{\ast }T_{k}A_{i}+\left( A_{j}^{\ast}A_{k}+B_{j}B_{k}^{\ast }\right) B_{i}^{\ast }%
\end{array}%
\right)
\end{equation*}%
Since $N_{i}N_{j}^{\ast }N_{k}=N_{j}^{\ast }N_{k}N_{i} \; (i,j,k=1,2)$, we have
\begin{equation}
T_{i}T_{j}^{\ast }T_{k}+A_{i}A_{j}^{\ast }T_{k}=T_{j}^{\ast}T_{k}T_{i} \; (i,j,k=1,2). \label{equ1}
\end{equation}
Observe that for $i=j$ we have $\left[T_{i},T_{i}^{\ast }T_{k}\right] =-A_{i}A_{i}^{\ast }T_{k}$.

$\left( \Longrightarrow \right)$: \ If $\mathbf{T}\equiv (T_{1},T_{2})$ is
(jointly) quasinormal, then by (\ref{equ1}), $\left[ T_{i},T_{i}^{\ast }T_{k}\right] =-A_{i}A_{i}^{\ast }T_{k}=0 \; (i,k=1,2)$. \ It follows that $A_{i}^{\ast }T_{k}=0 \; (i,k=1,2)$, as desired.

$\left( \Longleftarrow \right) :$ \ This is clear from (\ref{equ1}).
\end{proof}

Recall now that a commuting pair $\mathbf{T}$ is matricially quasinormal if $T_i$ commutes with $T_j^*T_k$ for all $i,j,k=1,2$. 

\begin{corollary}
Let $\mathbf{T}\equiv (T_{1},T_{2})$ be a subnormal pair, with normal extension $\mathbf{N}$. \ Then $\mathbf{T}$ is matricially quasinormal if and only if $A_{i}A_{j}^{\ast }T_{k}=0$ $\left( i,j,k=1,2\right) $.
\end{corollary}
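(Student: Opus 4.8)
The plan is to read the result straight off the computation already performed in the proof of Theorem \ref{Cor4}, so that the corollary becomes a one-line consequence rather than a fresh argument. By definition, $\mathbf{T}$ is matricially quasinormal precisely when $[T_i,T_j^{\ast}T_k]=0$ for \emph{every} choice of indices $i,j,k\in\{1,2\}$. The proof of Theorem \ref{Cor4} already established, by applying the Fuglede--Putnam theorem to obtain $N_iN_j^{\ast}=N_j^{\ast}N_i$ and then comparing the $(1,1)$ entries of $N_iN_j^{\ast}N_k$ and $N_j^{\ast}N_kN_i$, the identity \eqref{equ1}:
\[
T_iT_j^{\ast}T_k+A_iA_j^{\ast}T_k=T_j^{\ast}T_kT_i\qquad(i,j,k=1,2),
\]
which rearranges to $[T_i,T_j^{\ast}T_k]=-A_iA_j^{\ast}T_k$.

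First I would emphasize that \eqref{equ1} was derived for the full range $i,j,k\in\{1,2\}$, not merely the diagonal specialization $i=j$ used in the joint-quasinormal theorem; hence no additional block computation is needed here. The equivalence is then immediate. For the forward implication, matricial quasinormality forces the left-hand commutator to vanish for all $i,j,k$, and reading the identity from right to left yields $A_iA_j^{\ast}T_k=0$. For the reverse implication, substituting $A_iA_j^{\ast}T_k=0$ back into \eqref{equ1} gives $T_iT_j^{\ast}T_k=T_j^{\ast}T_kT_i$, i.e.\ $T_i$ commutes with $T_j^{\ast}T_k$ for all $i,j,k$, which is exactly matricial quasinormality.

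There is essentially no obstacle: the entire content resides in the identity \eqref{equ1}, whose derivation relies only on the normality and commutativity of the $N_i$ together with the block form \eqref{normal extension} of the normal extension. The single point worth flagging is that one must resist collapsing to the diagonal condition of Theorem \ref{Cor4}; matricial quasinormality genuinely requires the off-diagonal relations $A_iA_j^{\ast}T_k=0$ with $i\neq j$ as well, and these extra relations are precisely what separates matricial from joint quasinormality. Since \eqref{equ1} supplies all of them simultaneously, the corollary follows with no further work.
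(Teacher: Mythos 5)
Your proposal is correct and matches the paper's intent exactly: the paper's proof is simply ``straightforward from the proof of Theorem \ref{Cor4} and the definition of matricial quasinormality,'' and your argument is precisely the expansion of that remark, reading the equivalence off identity (\ref{equ1}), which was indeed derived there for all $i,j,k\in\{1,2\}$.
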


\begin{proof}
Straightforward from the proof of Theorem \ref{Cor4} and the definition of matricial quasinormality.
\end{proof}




\section{\label{JQ2VWS}Powers of Spherically Quasinormal $2$-variable Weighted Shifts}

In this section we focus on the class of $2$-variable weighted shifts. \ We mainly study spherical quasinormality, a notion that is emerging, in the multivariable context, as the most appropriate generalization of the classical notion of quasinormality for single operators. \ We start with two simple results, which help to validate the previous comment. \ We first observe that there is no matricially quasinormal $2$-variable
weighted shift. \ Next, we prove that the only (jointly) quasinormal $2$-variable weighted
shift is, up to a constant multiple, the Helton-Howe shift, that is, the shift of the form $\left( I\otimes U_{+}, U_{+}\otimes I\right)$, with Berger measure $\delta_1 \times \delta_1$. \ (We say that two commuting pairs $(S_1,S_2)$ and $(T_1,T_2)$ {\it differ by a constant multiple} if $S_i=r_iT_i$ for some $r_1,r_2>0$.) \ To this end, we need the following result which was announced in \cite{CuYo7} and established in \cite{CuYo6} (cf. \cite{CuYo8}).

\begin{theorem}
\label{Quasinormal} \ For a $2$-variable weighted shift $W_{\left( \alpha
,\beta \right) }=\left( T_{1},T_{2}\right) $, the following statements are
equivalent:\newline
(i) $\ W_{\left( \alpha ,\beta \right) }\equiv (T_{1},T_{2})\in \mathfrak{H}%
_{0}$ is a spherically quasinormal $2$-variable weighted shift; \newline
(ii) \ $T_{1}^{\ast }T_{1}+T_{2}^{\ast }T_{2}=C\cdot I$, for some positive constant $C$; \newline
(iii) \ for all $\mathbf{k}\equiv (k_{1},k_{2})\in \mathbb{Z}_{+}^{2}$, $%
\alpha _{(k_{1},k_{2})}^{2}+\beta _{(k_{1},k_{2})}^{2}=C$, for some positive constant $C>0$; \newline
(iv) \ for all $\mathbf{k}\equiv (k_{1},k_{2})\in \mathbb{Z}_{+}^{2}$, 
$\gamma_{\mathbf{k}+\varepsilon_1}+\gamma_{\mathbf{k}+\varepsilon_2}=C\gamma_{\mathbf{k}}$, for some positive constant $C>0$.
\end{theorem}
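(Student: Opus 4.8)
The plan is to handle the three ``diagonal'' conditions (ii), (iii), (iv) together, since each is really a statement about the action of $T_1^*T_1+T_2^*T_2$ on the canonical basis, and then to connect these to spherical quasinormality (i) through a single commutator computation. First I would record the elementary fact that, on the orthonormal basis $\{e_{\mathbf{k}}\}$,
\[
(T_1^*T_1+T_2^*T_2)e_{\mathbf{k}}=(\alpha_{\mathbf{k}}^2+\beta_{\mathbf{k}}^2)e_{\mathbf{k}},
\]
so that $P^2=T_1^*T_1+T_2^*T_2$ is the diagonal operator with eigenvalues $\alpha_{\mathbf{k}}^2+\beta_{\mathbf{k}}^2$. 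Reading this off gives (ii)$\Leftrightarrow$(iii) at once: a diagonal operator equals $C\cdot I$ precisely when each eigenvalue $\alpha_{\mathbf{k}}^2+\beta_{\mathbf{k}}^2$ equals $C$.

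For (iii)$\Leftrightarrow$(iv) I would use the moment recursion $\gamma_{\mathbf{k}+\varepsilon_1}=\alpha_{\mathbf{k}}^2\gamma_{\mathbf{k}}$ and $\gamma_{\mathbf{k}+\varepsilon_2}=\beta_{\mathbf{k}}^2\gamma_{\mathbf{k}}$, which follow from the path-independence of $\gamma_{\mathbf{k}}$ noted after (\ref{moment0}): compute the moment along any nondecreasing path to $\mathbf{k}$ and append one final step. Adding these,
\[
\gamma_{\mathbf{k}+\varepsilon_1}+\gamma_{\mathbf{k}+\varepsilon_2}=(\alpha_{\mathbf{k}}^2+\beta_{\mathbf{k}}^2)\gamma_{\mathbf{k}},
\]
and since every $\gamma_{\mathbf{k}}>0$, dividing shows that (iv) holds with constant $C$ exactly when $\alpha_{\mathbf{k}}^2+\beta_{\mathbf{k}}^2=C$ for all $\mathbf{k}$, which is (iii).

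The substantive equivalence is (i)$\Leftrightarrow$(ii). The direction (ii)$\Rightarrow$(i) is immediate: if $P^2=C\cdot I$, then each $T_i$ trivially commutes with $P^2$, which is the definition of spherical quasinormality (and membership in $\mathfrak{H}_0$ then follows from the implication chain (\ref{implication}), spherically quasinormal $\Rightarrow$ subnormal). For (i)$\Rightarrow$(iii) I would compute the commutators directly from the diagonal formula,
\[
[T_1,P^2]e_{\mathbf{k}}=\alpha_{\mathbf{k}}\left((\alpha_{\mathbf{k}}^2+\beta_{\mathbf{k}}^2)-(\alpha_{\mathbf{k}+\varepsilon_1}^2+\beta_{\mathbf{k}+\varepsilon_1}^2)\right)e_{\mathbf{k}+\varepsilon_1},
\]
and symmetrically for $[T_2,P^2]$ with $\varepsilon_2$ replacing $\varepsilon_1$. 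Since the weights $\alpha_{\mathbf{k}},\beta_{\mathbf{k}}$ are strictly positive, spherical quasinormality ($[T_i,P^2]=0$ for $i=1,2$) forces $\alpha_{\mathbf{k}}^2+\beta_{\mathbf{k}}^2$ to be invariant under both $\mathbf{k}\mapsto\mathbf{k}+\varepsilon_1$ and $\mathbf{k}\mapsto\mathbf{k}+\varepsilon_2$. Because every $\mathbf{k}\in\mathbb{Z}_+^2$ is reachable from $(0,0)$ by finitely many such steps, this local invariance propagates to the whole lattice, yielding $\alpha_{\mathbf{k}}^2+\beta_{\mathbf{k}}^2=\alpha_{(0,0)}^2+\beta_{(0,0)}^2=:C$ for all $\mathbf{k}$, i.e. (iii).

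I do not anticipate a serious obstacle: once the diagonal formula for $P^2$ is in hand, the commutator computation is mechanical, and the only genuine idea is the lattice-connectivity argument upgrading coordinatewise invariance to global constancy. The single point needing care is the moment step in (iii)$\Leftrightarrow$(iv), where I must invoke path-independence of $\gamma_{\mathbf{k}}$ (a consequence of the commutativity relation (\ref{commuting})) rather than differentiating the closed-form expression in (\ref{moment0}) term by term; appending a final step along a chosen path makes the factor $\alpha_{\mathbf{k}}^2$ or $\beta_{\mathbf{k}}^2$ transparent.
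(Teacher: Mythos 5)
Your argument is correct and complete. Note, however, that the paper does not actually prove Theorem \ref{Quasinormal}: it is quoted as a known result from \cite{CuYo7}, \cite{CuYo6} (cf.\ \cite{CuYo8}), so there is no in-paper proof to compare against. Your route --- diagonalizing $P^{2}=T_{1}^{\ast}T_{1}+T_{2}^{\ast}T_{2}$ on the canonical basis to get (ii)~$\Leftrightarrow$~(iii), using the one-step moment recursion $\gamma_{\mathbf{k}+\varepsilon_{1}}=\alpha_{\mathbf{k}}^{2}\gamma_{\mathbf{k}}$, $\gamma_{\mathbf{k}+\varepsilon_{2}}=\beta_{\mathbf{k}}^{2}\gamma_{\mathbf{k}}$ for (iii)~$\Leftrightarrow$~(iv), and closing the loop with the explicit commutator formula plus lattice connectivity for (i)~$\Leftrightarrow$~(ii) --- is the standard argument for weighted shifts and each step checks out. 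The two hypotheses you use implicitly are both available in the paper: the weights are assumed strictly positive (so the moments are positive and the commutator computation forces constancy of $\alpha_{\mathbf{k}}^{2}+\beta_{\mathbf{k}}^{2}$), and spherical quasinormality implies joint subnormality (Lemma \ref{Lemma 3} and the chain (\ref{implication})), which is what upgrades (ii) to the full statement of (i) including membership in $\mathfrak{H}_{0}$.
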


\begin{remark} A commuting pair $(T_1,T_2)$ is said to be a spherical isometry if $T_1^*T_1+T_2^*T_2=I$. \ It is well known that spherical isometries are subnormal (\cite{Ath1}, \cite{EsPu2}). \ Theorem \ref{Quasinormal} asserts that every spherically quasinormal $2$-variable weighted shift is, up to a constant multiple, a spherical isometry.
\end{remark}

Let us consider now matricial quasinormality for $2$-variable weighted shifts. \ Since $T_1$ must commute with $T_1^*T_2$, it follows easily that $T_1$ must be normal when restricted to $\Ran T_2$. \ In particular, the restriction of $T_1$ to each row with index $k_2$ bigger than zero will be normal. \ This contradicts the fact that there are no normal unilateral weighted shifts. \ Therefore, there are no matricially quasinormal $2$-variable weighted shifts.

Next, we examine (joint) quasinormality. \ Let us assume that $%
W_{\left( \alpha ,\beta \right) }\equiv (T_{1},T_{2})\in \mathfrak{H}_{0}$
is (jointly) quasinormal, i.e., $T_{i}$ commutes with $T_{j}^{\ast }T_{j}$ for $i,j\in \left\{ 1,2\right\} $. \ Thus, for all $(k_{1},k_{2})\in \mathbb{Z}_{+}^{2}$ we obtain that
\begin{equation}
\begin{tabular}{l}
$T_{1}T_{2}^{\ast }T_{2}\left( e_{(k_{1},k_{2})}\right) =T_{2}^{\ast
}T_{2}T_{1}\left( e_{(k_{1},k_{2})}\right) $ \\
$\Longleftrightarrow \beta _{(k_{1},k_{2})}^{2}T_{1}\left(
e_{(k_{1},k_{2})}\right) =\alpha _{(k_{1},k_{2})}\beta
_{(k_{1}+1,k_{2})}T_{2}^{\ast }\left( e_{(k_{1}+1,k_{2}+1)}\right) $ \\
$\Longleftrightarrow \beta _{(k_{1},k_{2})}^{2}\alpha
_{(k_{1},k_{2})}=\alpha _{(k_{1},k_{2})}\beta _{(k_{1}+1,k_{2})}^{2}$ \\
$\Longleftrightarrow \beta _{(k_{1},k_{2})}=\beta _{(k_{1}+1,k_{2})}$.%
\end{tabular}
\label{cond1}
\end{equation}%
and%
\begin{equation}
\begin{tabular}{l}
$T_{2}T_{1}^{\ast }T_{1}\left( e_{(k_{1},k_{2})}\right) =T_{1}^{\ast
}T_{1}T_{2}\left( e_{(k_{1},k_{2})}\right) $ \\
$\Longleftrightarrow \alpha _{(k_{1},k_{2})}=\alpha _{(k_{1},k_{2}+1)}$.%
\end{tabular}
\label{cond2}
\end{equation}%
Recall now that jointly quasinormal pairs are always spherically quasinormal, and apply Theorem \ref{Quasinormal} to (\ref{cond1}) and (\ref{cond2}). \ We see that
\begin{equation}
\alpha _{(k_{1},k_{2})}=\alpha _{(k_{1}+1,k_{2})}\text{ and }\beta
_{(k_{1},k_{2})}=\beta _{(k_{1},k_{2}+1)}\text{.}  \label{cond3}
\end{equation}
Since (joint) quasinormality implies spherical quasinormality, by (\ref{cond1}) and (\ref{cond2}) we have 
\begin{equation}
\alpha _{(k_{1},k_{2})}^{2}+\beta _{(k_{1},k_{2})}^{2}=\alpha
_{(k_{1}+1,k_{2})}^{2}+\beta _{(k_{1}+1,k_{2})}^{2}\Longrightarrow \alpha
_{(k_{1},k_{2})}=\alpha _{(k_{1}+1,k_{2})}  \label{cond4}
\end{equation}%
and
\begin{equation}
\alpha _{(k_{1},k_{2})}^{2}+\beta _{(k_{1},k_{2})}^{2}=\alpha
_{(k_{1},k_{2}+1)}^{2}+\beta _{(k_{1},k_{2}+1)}^{2}\Longrightarrow \beta
_{(k_{1},k_{2})}=\beta _{(k_{1},k_{2}+1)}\text{.}  \label{cond5}
\end{equation}
Let $W_{\alpha ^{(k_{2})}}$ (resp. $W_{\beta ^{(k_{1})}}$) denote the unilateral weighted shift of the associated with the $k_{2}$-th horizontal row (resp. $k_{1}$-th vertical
column) in the weighted diagram of $W_{\left( \alpha ,\beta \right)
}\equiv (T_{1},T_{2})$. \ Condition (\ref{cond4})
implies that
\begin{equation}
W_{\alpha ^{(k_{2})}}=\shift(\alpha _{\left( 0,k_{2}\right) },\alpha
_{\left( 1,k_{2}\right) },\cdots )=\alpha_{(0,k_2)} \cdot U_{+}.  \label{cond6}
\end{equation}%
Similarly, Condition (\ref{cond5}) implies that
\begin{equation}
W_{\beta ^{(k_{1})}}=\shift(\beta _{\left( k_{1},0\right) },\beta
_{\left( k_{1},10\right) },\cdots )=\beta_{(k_1,0)}\cdot U_{+}.  \label{cond7}
\end{equation}%
Thus, by (\ref{cond6}) and (\ref{cond7}), we obtain that, up to a constant multiple, the only (jointly) quasinormal $2$-variable weighted shift is $\left( I\otimes U_{+},U_{+}\otimes I\right) $.

We summarize the previous analysis in the following result.

\begin{theorem}
\label{Theorem 2}Up to a constant multiple, the only (jointly) quasinormal $2$-variable weighted shift is the Helton-Howe shift $\left( I\otimes U_{+},U_{+}\otimes I\right) $.
\end{theorem}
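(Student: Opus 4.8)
The plan is to read joint quasinormality directly as the four scalar commutation relations $[T_i,T_j^*T_j]=0$ $(i,j\in\{1,2\})$ and evaluate each of them on the canonical basis vectors $e_{(k_1,k_2)}$. The key simplification is that $T_1^*T_1$ and $T_2^*T_2$ act diagonally, with $T_1^*T_1 e_{(k_1,k_2)}=\alpha_{(k_1,k_2)}^2 e_{(k_1,k_2)}$ and $T_2^*T_2 e_{(k_1,k_2)}=\beta_{(k_1,k_2)}^2 e_{(k_1,k_2)}$, while each $T_i$ sends a basis vector to a scalar multiple of a single basis vector. Consequently every one of the four commutators, applied to $e_{(k_1,k_2)}$, collapses to an equality between two positive scalars multiplying the same target basis vector; cancelling the common shift weight (legitimate since all weights are positive) leaves a constraint relating neighbouring weights.

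Carrying this out, the mixed relation $[T_1,T_2^*T_2]=0$ yields $\beta_{(k_1,k_2)}=\beta_{(k_1+1,k_2)}$ (equation (\ref{cond1})) and the mixed relation $[T_2,T_1^*T_1]=0$ yields $\alpha_{(k_1,k_2)}=\alpha_{(k_1,k_2+1)}$ (equation (\ref{cond2})), exactly as in the displayed computation. The two remaining constancy conditions, $\alpha_{(k_1,k_2)}=\alpha_{(k_1+1,k_2)}$ (equation (\ref{cond4})) and $\beta_{(k_1,k_2)}=\beta_{(k_1,k_2+1)}$ (equation (\ref{cond5})), I would obtain directly from the two diagonal relations $[T_1,T_1^*T_1]=0$ and $[T_2,T_2^*T_2]=0$, which reduce respectively to $\alpha_{(k_1,k_2)}^2=\alpha_{(k_1+1,k_2)}^2$ and $\beta_{(k_1,k_2)}^2=\beta_{(k_1,k_2+1)}^2$. (The displayed analysis instead derives (\ref{cond4})--(\ref{cond5}) by feeding (\ref{cond1})--(\ref{cond2}) into the spherical quasinormality forced by Theorem \ref{Quasinormal}; both routes arrive at the full set of constancy relations (\ref{cond3}).)

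Once all four constancy conditions are in hand, connectivity of $\mathbb{Z}_+^2$ under unit steps in the two coordinate directions shows that $\alpha_{(k_1,k_2)}\equiv a$ and $\beta_{(k_1,k_2)}\equiv b$ are globally constant for some $a,b>0$. Under the identification $\ell^2(\mathbb{Z}_+^2)\cong \ell^2(\mathbb{Z}_+)\otimes \ell^2(\mathbb{Z}_+)$ via $e_{(k_1,k_2)}=e_{k_1}\otimes e_{k_2}$, the operators then become $T_1=a\,U_+\otimes I$ and $T_2=b\,I\otimes U_+$. Dividing $T_1$ by $a$ and $T_2$ by $b$ (the constant-multiple normalization of the statement) produces $(U_+\otimes I,\,I\otimes U_+)$, which, after relabeling the two tensor factors, is the Helton-Howe shift $(I\otimes U_+,\,U_+\otimes I)$ with Berger measure $\delta_1\times\delta_1$.

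I do not anticipate a serious obstacle: each commutator is a one-line diagonal calculation, and positivity of the weights makes every cancellation valid. The main points demanding care are bookkeeping rather than depth. First, one must verify that the four relations \emph{together} pin down constancy in both coordinate directions; a single variable's relations alone leave one direction free, so all four are genuinely needed. Second, one must keep the tensor-factor convention straight so that the resulting constant pair is correctly identified, up to constant multiple and relabeling, with the Helton-Howe shift (and one notes in passing that a pair of globally constant weights automatically satisfies the commutativity condition (\ref{commuting})).
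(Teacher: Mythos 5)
Your proposal is correct, and the overall strategy coincides with the paper's: evaluate the defining commutators $[T_i,T_j^{\ast}T_j]=0$ on the canonical basis vectors, deduce that $\alpha$ and $\beta$ are constant in both coordinate directions, and identify the resulting pair with a constant multiple of $(I\otimes U_+,U_+\otimes I)$. The one genuine difference is how you obtain the constancy relations (\ref{cond4}) and (\ref{cond5}). The paper derives the mixed relations (\ref{cond1})--(\ref{cond2}) exactly as you do, but then gets the remaining two by observing that joint quasinormality implies spherical quasinormality and invoking Theorem \ref{Quasinormal}(iii), i.e., $\alpha_{\mathbf{k}}^{2}+\beta_{\mathbf{k}}^{2}=C$, which together with (\ref{cond1})--(\ref{cond2}) forces (\ref{cond4})--(\ref{cond5}). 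You instead extract them directly from the diagonal relations $[T_i,T_i^{\ast}T_i]=0$: on basis vectors these give $\alpha_{\mathbf{k}}^{3}=\alpha_{\mathbf{k}}\alpha_{\mathbf{k}+\varepsilon_1}^{2}$ and $\beta_{\mathbf{k}}^{3}=\beta_{\mathbf{k}}\beta_{\mathbf{k}+\varepsilon_2}^{2}$, whence $\alpha_{\mathbf{k}}=\alpha_{\mathbf{k}+\varepsilon_1}$ and $\beta_{\mathbf{k}}=\beta_{\mathbf{k}+\varepsilon_2}$ by positivity of the weights (this is just the classical fact that the only quasinormal $1$-variable weighted shift is a constant multiple of $U_+$, applied to each row and column). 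Your route is more self-contained, as it uses only the definition and does not depend on the characterization of spherical quasinormality imported from \cite{CuYo6}; the paper's route has the expository advantage of displaying the implication joint $\Rightarrow$ spherical quasinormality at work. Your closing remarks --- that all four relations are needed to pin down constancy in both directions, that globally constant weights automatically satisfy the commutativity condition (\ref{commuting}), and that the identification with $(I\otimes U_+,U_+\otimes I)$ is a matter of tensor-factor labeling --- are all accurate and take care of the only points where the argument could be sloppy.
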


We now discuss four key questions in the study of spherical quasinormality. \ The following two Problems are suitable restatements of Problems \ref{Problem 2} and \ref{Problem 3}, respectively.

\begin{problem}
\label{P2}(i) \ Let $\mathbf{T}\equiv (T_{1},T_{2})\in \mathfrak{H}_{0}$ be
spherically quasinormal. \ Does it follow that $\mathbf{T}^{\left( m,n\right) }\equiv
(T_{1}^{m},T_{2}^{n})$ is spherically quasinormal?\newline
(ii) \ Let $\mathbf{T}$ be spherically quasinormal and assume that $\mathbf{T}^{\left( 1,2\right) }\equiv
(T_{1}^2,T_{2})$ is also spherically quasinormal. \ Does it follow that $\mathbf{T}$
is jointly quasinormal?\newline
(iii) \ Let $\mathbf{T}\equiv (T_{1},T_{2})\in \mathfrak{H}_{0}$, and assume that both $\mathbf{T}^{\left( 2,1\right) }\equiv (T_{1}^{2},T_{2}^{1})$ and
$\mathbf{T}^{\left( 1,2\right) }\equiv (T_{1},T_{2}^{2})$ are spherically quasinormal. \ Does it follow that $\mathbf{T}$ is spherically quasinormal?
\end{problem}

\begin{problem}
\label{P3}Let $\mathbf{T} \in \mathfrak{H}_{0}$ and assume that $\widehat{\mathbf{T}}=\mathbf{T}=\widetilde{\mathbf{T}}$. \ Does it follow that $\mathbf{T}$ is (jointly) quasinormal?
\end{problem}

We first address Problem \ref{P2} (i). \ The following example
shows that there exists a spherically quasinormal $2$-variable weighted shift $W_{\left( \alpha ,\beta
\right) }$ such that $W_{\left( \alpha ,\beta \right) }^{(2,1)}$ is not
spherically quasinormal. \ Motivated by the ideas in \cite{CuP}, we split the ambient space $\ell ^{2}(%
\mathbb{Z}_{+}^{2})$ into an orthogonal direct sum $\oplus
_{p=0}^{m-1}\oplus _{q=0}^{n-1}\mathcal{H}_{(p,q)}^{(m,n)}$, where%
\begin{equation*}
\mathcal{H}_{(p,q)}^{(m,n)}:=\vee \{e_{(m\ell +p,nk+q)}:k=0,1,2,\cdots ,\ell
=0,1,2,\cdots \}\text{.}
\end{equation*}%
Let $W_{\left( \alpha ,\beta \right) }^{(m,n)}|_{\mathcal{H}%
_{(p,q)}^{(m,n)}} $ be the restriction of $W_{\left( \alpha ,\beta \right)
}^{(m,n)}$ to the space $\mathcal{H}_{(p,q)}^{(m,n)}$. \ Each of $\mathcal{H}%
_{(p,q)}^{(m,n)}$ reduces $T_{1}^{m}$ and $T_{2}^{n}$, and $W_{\left( \alpha
,\beta \right) }^{(m,n)}$ is subnormal if and only if each $W_{\left( \alpha
,\beta \right) }^{(m,n)}|_{\mathcal{H}_{(p,q)}^{(m,n)}}$ is subnormal. \ We
let $\alpha _{(k_{1},k_{2})}^{\left( m,n\right) }|_{\mathcal{H}%
_{(p,q)}^{(m,n)}}$ and $\beta _{(k_{1},k_{2})}^{(m,n)}|_{\mathcal{H}%
_{(p,q)}^{(m,n)}}$ be the weights of $W_{\left( \alpha ,\beta \right)
}^{(m,n)}|_{\mathcal{H}_{(p,q)}^{(m,n)}}$. 

\begin{example} 
\label{ex1}Consider $W_{\left( \alpha ,\beta \right) }$ given by Figure \ref{Figure1}(ii). \ Then $W_{\left( \alpha ,\beta \right) }$ is spherically quasinormal (indeed, a spherical isometry!) but $W_{\left( \alpha ,\beta \right) }^{(2,1)}$ is not spherically quasinormal. \ To establish this, we will apply the results in Theorem \ref{Quasinormal}. \ Suppose that $W_{\left( \alpha ,\beta \right) }^{(2,1)}$ is spherically quasinormal. \ Then $W_{\left( \alpha ,\beta \right) }^{(2,1)}|_{\mathcal{H}_{(0,0)}^{(2,1)}}$
and $W_{\left( \alpha ,\beta \right) }^{(2,1)}|_{\mathcal{H}_{(1,0)}^{(2,1)}} $ are both spherically quasinormal, so that by Theorem \ref{Quasinormal} we have that $\alpha _{(k_{1},k_{2})}^{\left( 2,1\right) }|_{%
\mathcal{H}_{(p,0)}^{(2,1)}}+\beta _{(k_{1},k_{2})}^{(2,1)}|_{\mathcal{H}%
_{(p,0)}^{(2,1)}}$ is constant for all $(k_{1},k_{2})\in \mathbb{Z}_{+}^{2}$
and $p\in \left\{ 0,1\right\} $. \ Consider $W_{\left( \alpha ,\beta \right)
}^{(2,1)}|_{\mathcal{H}_{(0,0)}^{(2,1)}}$ and $\alpha _{(k_{1},0)}^{\left(
2,1\right) }|_{\mathcal{H}_{(0,0)}^{(2,1)}}+\beta _{(k_{1},0)}^{(2,1)}|_{%
\mathcal{H}_{(0,0)}^{(2,1)}}$ for $k_{1}\in \left\{ 0,1\right\} $. \ Observe
that%
\begin{equation*}
\begin{tabular}{l}
$\alpha _{(0,0)}^{\left( 2,1\right) }|_{\mathcal{H}_{(0,0)}^{(2,1)}}+\beta
_{(0,0)}^{(2,1)}|_{\mathcal{H}_{(0,0)}^{(2,1)}}=\frac{8}{9}\neq \frac{44}{45}%
=\alpha _{(1,0)}^{\left( 2,1\right) }|_{\mathcal{H}_{(0,0)}^{(2,1)}}+\beta
_{(1,0)}^{(2,1)}|_{\mathcal{H}_{(0,0)}^{(2,1)}}.$%
\end{tabular}%
\end{equation*}%
By Theorem \ref{Quasinormal}, $W_{\left( \alpha ,\beta
\right) }^{(2,1)}|_{\mathcal{H}_{(0,0)}^{(2,1)}}$ is not spherically
quasinormal, and as a result $W_{\left( \alpha ,\beta \right) }^{(2,1)}$ is not
spherically quasinormal, as desired. \qed
\end{example}

To proceed, we recall some basic results from the theory of truncated moment problems. \ (For more on truncated moment problems we refer to \cite{tcmp1} and \cite{tcmp4}.) \ Given real numbers
\begin{equation*}
\gamma \equiv \gamma ^{(2n)}:=\gamma _{00},\gamma _{01},\gamma _{10},\gamma
_{02},\gamma _{11},\gamma _{20},\cdots ,\gamma _{02n},\cdots ,\gamma _{2n0}
\end{equation*}%
with $\gamma _{00}>0$, the \textit{truncated real moment problem} for $%
\gamma $ entails finding conditions for the existence of a positive Borel
measure $\mu $, supported in $\mathbb{R}_{+}^{2}$, such that
\begin{equation*}
\gamma _{ij}=\int y^{i}x^{j}d\mu(x,y) ,\quad 0\leq i+j\leq n.
\end{equation*}%
Given $\gamma \equiv \gamma ^{(2n)}$, we can build an associated \textit{%
moment matrix} $M(n)\equiv M(n)(\gamma ):=(M[i,j](\gamma ))_{i,j=0}^{n}$,
where
\begin{equation}
M[i,j](\gamma ):=\left(
\begin{array}{llll}
\gamma _{0,i+j} & \gamma _{1,i+j-1} & \cdots & \gamma _{j,i} \\
\gamma _{1,i+j-1} & \gamma _{2,i+j-2} & \cdots & \gamma _{j+1,i-1} \\
\text{ \thinspace \thinspace \quad }\vdots & \text{ \thinspace \thinspace
\quad }\vdots & \ddots & \text{ \thinspace \thinspace \quad }\vdots \\
\gamma _{i,j} & \gamma _{i+1,j-1} & \cdots & \gamma _{i+j,0}%
\end{array}%
\right) .  \label{moment matrix}
\end{equation}%
We denote the successive rows and columns of $M(n)(\gamma )$ by
\begin{equation}
\mathbf{1},X,Y,X^{2},XY,Y^{2},\cdots ,X^{n},\cdots ,Y^{n}.
\label{moment matrix 1}
\end{equation}%
Observe that each block $M[i,j](\gamma )$ is of \textit{Hankel} form, i.e.,
constant in cross-diagonals. 

We now provide a characterization of a class of $2$-variable weighted shifts $W_{\left( \alpha ,\beta \right) } \equiv (T_1,T_2)$ which are spherically quasinormal and have power $(T_1^2,T_2)$ also spherically quasinormal. \ This answers Problem \ref{P2} (ii) in the negative, while identifying the key obstruction, namely the condition $\alpha_{(0,0)}<1$.

\begin{theorem}
\label{Theorem 4}Let $W_{\left( \alpha ,\beta \right) }\equiv (T_1,T_2)$ be spherically quasinormal, and assume that $W_{\left(\alpha ,\beta \right) }^{(2,1)} \equiv (T_1^2,T_2)$ is also spherically quasinormal. \ Assume also that $\alpha_{(3,0)}=1$. \ Then, up to a scalar multiple, either $W_{\left( \alpha ,\beta \right) }$ is the Helton-Howe shift or the Berger measure $\mu$ of $W_{\left( \alpha ,\beta \right) }$ is $2$-atomic of the form
$$
\mu = (1-x_0) \delta_{(0,1+q)} + x_0 \delta_{(1,q)},
$$
where $x_0:=\alpha_{(0,0)}^2$ and $q:=\beta_{(1,0)}^2$. \ In this case, the restriction of $(T_1,T_2)$ to the invariant subspace $\mathcal{N}$ is, up to a constant multiple, the Helton-Howe shift. \ (Here $\mathcal{N}$ denotes the subspace of $\ell^2(\mathbb{Z}_+^2)$ generated by all canonical orthonormal basis vectors $e_{(k_1,k_2)}$ with $k_1\ge1$.)
\end{theorem}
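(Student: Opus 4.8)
The plan is to convert both spherical quasinormality hypotheses into pointwise identities among the squared weights, combine them into a single recursion in the first variable, and then recover the Berger measure from its moments. Throughout I keep the constant $C$ of Theorem \ref{Quasinormal} general, so that ``up to a scalar multiple'' is handled automatically. First I would record that, by Theorem \ref{Quasinormal}, spherical quasinormality of $W_{(\alpha,\beta)}=(T_1,T_2)$ means $\alpha_{(k_1,k_2)}^2+\beta_{(k_1,k_2)}^2=C$ for all $(k_1,k_2)$. For $(T_1^2,T_2)$ I would compute directly that $T_1^2$ and $T_2$ each commute with $Q:=(T_1^2)^*T_1^2+T_2^*T_2$, which acts diagonally with eigenvalue $f(k_1,k_2):=\alpha_{(k_1,k_2)}^2\alpha_{(k_1+1,k_2)}^2+\beta_{(k_1,k_2)}^2$. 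Commutation forces $f$ to be invariant under $(k_1,k_2)\mapsto(k_1+2,k_2)$ and under $(k_1,k_2)\mapsto(k_1,k_2+1)$, so $f$ depends only on the parity of $k_1$ (equivalently, this is Theorem \ref{Quasinormal} applied on each of the two reducing slices $\mathcal{H}_{(p,0)}^{(2,1)}$). Writing $f=D_0$ for $k_1$ even and $f=D_1$ for $k_1$ odd and subtracting $\alpha^2+\beta^2=C$, I obtain the master recursion
\[
\alpha_{(k_1,k_2)}^2\bigl(\alpha_{(k_1+1,k_2)}^2-1\bigr)=D_{k_1\bmod 2}-C .
\]

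Next I would pin down $D_0$ and $D_1$. Evaluating the master recursion at $(2,0)$ and using the hypothesis $\alpha_{(3,0)}=1$ makes the left-hand side vanish, forcing $D_0=C$; feeding this back into the recursion at even indices and using $\alpha_{(k_1,k_2)}^2>0$ yields $\alpha_{(k_1,k_2)}=1$ for every odd $k_1$. Substituting these into the recursion at odd indices gives $\alpha_{(k_1,k_2)}^2=1+(D_1-C)$ for every even $k_1\ge 2$. Here I would invoke monotonicity of the weights: each horizontal row is the restriction of the subnormal (hence hyponormal) operator $T_1$ to a $T_1$-invariant subspace, so $\alpha_{(k_1,k_2)}\le\alpha_{(k_1+1,k_2)}$, and the chain $1=\alpha_{(1,k_2)}^2\le\alpha_{(2,k_2)}^2=1+(D_1-C)\le\alpha_{(3,k_2)}^2=1$ forces $D_1=C$. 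Consequently $\alpha_{(k_1,k_2)}=1$ for all $k_1\ge 1$ and all $k_2$, while $x_{k_2}:=\alpha_{(0,k_2)}^2\le 1$.

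With the $\alpha$'s determined, I would recover $\beta_{(k_1,k_2)}^2=C-\alpha_{(k_1,k_2)}^2$, so $\beta_{(k_1,k_2)}^2=C-1=:q$ for $k_1\ge 1$ (whence $q=\beta_{(1,0)}^2$ and $C=1+q$) and $\beta_{(0,k_2)}^2=C-x_{k_2}$. Substituting into the commutativity relation (\ref{commuting}) at $k_1=0$, namely $\beta_{(1,k_2)}^2\alpha_{(0,k_2)}^2=\alpha_{(0,k_2+1)}^2\beta_{(0,k_2)}^2$, produces the scalar recursion
\[
x_{k_2+1}=\frac{(C-1)\,x_{k_2}}{C-x_{k_2}},\qquad x_0=\alpha_{(0,0)}^2 .
\]
If $x_0=1$ the recursion is constant, all weights equal $1$ (for $\alpha$) or $\sqrt{q}$ (for $\beta$), and $W_{(\alpha,\beta)}$ is the Helton-Howe shift up to a constant multiple. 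If $x_0<1$, I would verify by a direct moment computation that the $2$-atomic measure $\mu=(1-x_0)\delta_{(0,1+q)}+x_0\delta_{(1,q)}$ reproduces every moment $\gamma_{(k_1,k_2)}$ determined (via (\ref{moment0})) by the weights found above; by uniqueness of the Berger measure in (\ref{Berger Theorem}), $\mu$ is exactly the Berger measure of $W_{(\alpha,\beta)}$. Finally, since all weights with $k_1\ge 1$ equal $1$ or $\sqrt{q}$, the subspace $\mathcal{N}$ (invariant under both $T_1$ and $T_2$) carries a restriction $(T_1,T_2)|_{\mathcal{N}}$ with constant weights, i.e.\ the Helton-Howe shift up to a constant multiple, which finishes the argument.

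I expect the main obstacle to be the step establishing $D_0=D_1=C$. This is precisely where the apparently ad hoc hypothesis $\alpha_{(3,0)}=1$ and the subnormality-driven monotonicity of the row weights are both indispensable, and it is exactly the place where the rigid Helton-Howe solution ($x_0=1$) separates from the one-parameter family of $2$-atomic examples ($x_0<1$); the latter identifies $\alpha_{(0,0)}<1$ as the obstruction to joint quasinormality, consistent with Theorem \ref{Theorem 2}. A secondary care point is the justification that spherical quasinormality of $(T_1^2,T_2)$ yields only parity-wise constancy of $f$ rather than global constancy, since $(T_1^2,T_2)$ is not itself a $2$-variable weighted shift in the standard sense.
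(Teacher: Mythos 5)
Your proposal is correct, and it reaches the conclusion by a genuinely different route from the paper's. The paper works along the bottom row first: from the restriction of $(T_1^2,T_2)$ to $\mathcal{H}^{(2,1)}_{(0,0)}$ it extracts the single identity $x_0x_1+p=x_2+r$, combines it with $x_0+p=x_2+r$ to get $x_1=1$, uses hyponormality to force $x_2=1$, and then invokes Stampfli's theorem (a subnormal unilateral weighted shift with two consecutive equal weights is constant from the first of them on) to flatten the entire $0$-th row, after which commutativity propagates upward row by row. You instead derive the master recursion $\alpha_{\mathbf{k}}^2\bigl(\alpha_{\mathbf{k}+\varepsilon_1}^2-1\bigr)=D_{k_1\bmod 2}-C$ valid at \emph{every} lattice point, and the hypothesis $\alpha_{(3,0)}=1$ together with row monotonicity kills both constants at once; this determines all rows simultaneously and bypasses Stampfli's theorem entirely. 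The second divergence is in identifying the measure: the paper encodes the weight structure as the column relations $X^2=X$, $XY=qX$, $X+Y=(1+q)\cdot\mathbf{1}$ and invokes the Curto--Fialkow flat-extension machinery to produce the unique $\operatorname{rank}M(1)$-atomic representing measure supported on the associated variety, whereas you solve the scalar recursion for $x_{k_2}$ and match moments against the candidate measure directly. Your route is more elementary and self-contained; the paper's buys the conclusion with no explicit computation of $x_{k_2}$ and illustrates the moment-matrix technique. Two small points to tighten: (i) the moment verification you leave as a claim does go through, via the closed form $x_k=x_0q^k/\bigl((1-x_0)(1+q)^k+x_0q^k\bigr)$ and the telescoping identity $1+q-x_k=g(k+1)/g(k)$ with $g(k):=(1-x_0)(1+q)^k+x_0q^k$, but it should be written out; (ii) restrictions of hyponormal operators to invariant subspaces need not be hyponormal, so justify the monotonicity $\alpha_{(k_1,k_2)}\le\alpha_{(k_1+1,k_2)}$ by noting that each row \emph{reduces} $T_1$ (or that restrictions of subnormal operators to invariant subspaces are subnormal, hence hyponormal).
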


\begin{proof}
Let $x_1:=\alpha _{(1,0)}^2$, $x_2:=\alpha _{(2,0)}^2$, $x_3:=\alpha _{(3,0)}^2$, $
p:=\beta _{(0,0)}^2$, $r:=\beta _{(2,0)}^2$ and $s:=\beta _{(3,0)}^2$. \ By assumption, $x_3=1$. \ Since both $W_{\left( \alpha ,\beta \right) }$ and $W_{\left( \alpha ,\beta \right) }^{(2,1)}$ are spherically quasinormal, Theorem \ref{Quasinormal} readily implies that  
\begin{equation}
\begin{tabular}{l}
$x_0+p=x_1+q=x_2+r=1+s$ and $x_0x_1+p=x_2+r$.
\end{tabular}
\end{equation}
Then
\begin{equation}
\begin{tabular}{l}
$x_0x_1+p=x_0+p \Longrightarrow
x_0\left( x_1-1\right) =0 \Longrightarrow x_1=1$.
\end{tabular}
\end{equation}
Since $W_{\left( \alpha ,\beta \right) }$ is subnormal (and therefore hyponormal), we must have $0\leq x_0\leq x_1\leq x_2 \leq x_3$, and it therefore follows that $x_1=x_2=x_3=1$. \ Since $%
W_{\left( \alpha ,\beta \right) }=(T_{1},T_{2})$ is spherically quasinormal,
it is also subnormal (by Lemma \ref{Lemma 3}), so that $T_{1}$ and $T_{2}$ are
both subnormal operators. \ Since $T_1$ is subnormal, each horizontal row is a subnormal unilateral weighted shift. \ For these shifts, it is well known that the presence of two equal weights readily implies that the shift is of the form $\shift(\alpha_0,\alpha_1,\alpha_1,\alpha_1,\cdots)$, with $\alpha_0 \le \alpha_1$ \cite{Sta2}. \ It follows that
\begin{equation*}
\shift(\alpha _{(0,0)},\alpha _{(1,0)},\cdots )=S_{\alpha _{(0,0)}}\equiv
\shift(\alpha _{(0,0)},1,1,\cdots )
\end{equation*}%
with Berger measures $(1-\alpha _{(0,0)}^{2})\delta _{0}+\alpha _{(0,0)}^{2}\delta
_{1}$. \ Since $W_{\left( \alpha ,\beta \right) }$ is spherically
quasinormal, Theorem \ref{Quasinormal} and the commutativity of $T_{1}$ and $%
T_{2}$ imply that $\shift(\alpha _{(1,k_{2})},\alpha _{(2,k_{2})},\newline \cdots
)=U_{+}$ and $\shift(\beta _{(k_{1},1)},\beta _{(k_{1},2)},\cdots )=\sqrt{q%
}\cdot U_{+}$ for all $k_{1}\geq 1$ and $k_{2}\geq 0$. \ 
This immediately leads to the following column relations in the moment matrix of $W_{(\alpha ,\beta )}$:
\begin{equation}
\left\{
\begin{tabular}{l}
$X^2=X$ \\
$XY=qX$ \\
$X+Y=(1+q) \cdot 1$.
\end{tabular}
\right .
\label{eq202}
\end{equation}
Let us focus now on the $6 \times 6$ moment matrix $M(2)$. \ Since $M(2)$ is positive and recursively generated, we can formally multiply the third column relation in (\ref{eq202}) by $Y$ to obtain
$$
XY+Y^2=(1+q)Y,
$$
and therefore
$$
Y^2=(1+q)Y-qX.
$$
Thus, the columns $Y$, $X^2$, $XY$ and $Y^2$ are all linear combinations of the columns $1$ and $X$. \ It follows that $M(2)$ is a flat extension of $M(1)$, and there exists a unique representing measure $\mu$, with $\card(\supp \mu) = \rank M(1)$, where $\card$ denotes cardinality. (The measure $\mu$ is actually the Berger measure of $( T_1,T_2)$.) \ If $\rank M(1)=1$, then $X=x_0 \cdot 1$ and $Y=p \cdot 1$, so that $x_0=1$ and $p=q$. \ We can then easily show that, up to a constant multiple, $(T_1,T_2)$ is the Helton-Howe shift. \ If instead $\rank M(1)=2$, then $x_0<1$ and therefore $p>q$ (because $x_0+p=1+q$ by the last equation in (\ref{eq202}). \ Also, $\supp \mu$ is the algebraic variety consisting of the intersection of the zero sets of the polynomials associated with the column relations in (\ref{eq202}). The equations to solve (simultaneously) are:
\begin{equation}
\left\{
\begin{tabular}{l}
$x^2=x$ \\
$xy=qx$ \\
$x+y=1+q$.
\end{tabular}
\right .
\label{eq203}
\end{equation}
It follows that $\supp \mu=\{ (0,1+q),(1,q)\}$. \ A simple calculation now reveals that the densities of $\mu$ associated to these two atoms are $1-x_0$ and $x_0$, resp. \ The proof is complete.
\end{proof}

\begin{corollary}
\label{Cor 41}Let $W_{\left( \alpha ,\beta \right) }\equiv (T_1,T_2)$ be spherically quasinormal, and assume that $W_{\left(\alpha ,\beta \right) }^{(2,1)} \equiv (T_1^2,T_2)$ and $W_{\left(\alpha ,\beta \right) }^{(1,2)} \equiv (T_1,T_2^2)$ are also spherically quasinormal. \ Then, up to a scalar multiple, $W_{\left( \alpha ,\beta \right) }$ is the Helton-Howe shift, and therefore $W_{\left( \alpha ,\beta \right) }$ is (jointly) quasinormal.
\end{corollary}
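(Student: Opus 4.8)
The plan is to combine Theorem \ref{Theorem 4} with the extra hypothesis on $W_{(\alpha,\beta)}^{(1,2)}$, using the latter to eliminate the non-Helton-Howe alternative. Since $W_{(\alpha,\beta)}$ and $W_{(\alpha,\beta)}^{(2,1)}$ are both spherically quasinormal, Theorem \ref{Theorem 4} leaves, up to a constant multiple, exactly two possibilities: either $W_{(\alpha,\beta)}$ is already the Helton-Howe shift, in which case we are done, or its Berger measure is the $2$-atomic measure
$$
\mu = (1-x_0)\,\delta_{(0,1+q)} + x_0\,\delta_{(1,q)},
$$
with $x_0=\alpha_{(0,0)}^2\in(0,1)$ and $q=\beta_{(1,0)}^2>0$. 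In the second case I would first read off from $\mu$ the moments $\gamma_{(0,0)}=1$, $\gamma_{(1,0)}=x_0$, $\gamma_{(0,1)}=1-x_0+q$, $\gamma_{(0,2)}=(1-x_0)(1+q)^2+x_0q^2$, together with the fact that $\gamma_{(k_1,k_2)}=x_0q^{k_2}$ whenever $k_1\ge1$ (the atom at $(0,1+q)$ drops out once $k_1\ge1$). In particular $\alpha_{(k_1,k_2)}^2=1$ and $\beta_{(k_1,k_2)}^2=q$ for all $k_1\ge1$, so that on $\mathcal{N}$ the pair is the scaled Helton-Howe shift, exactly as Theorem \ref{Theorem 4} asserts.

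Next I would invoke the hypothesis that $W_{(\alpha,\beta)}^{(1,2)}\equiv(T_1,T_2^2)$ is spherically quasinormal. Decomposing $(T_1,T_2^2)$ into the two reducing subspaces on which it acts as a genuine $2$-variable weighted shift and applying Theorem \ref{Quasinormal} to each, spherical quasinormality is equivalent to $\alpha_{(k_1,k_2)}^2+\beta_{(k_1,k_2)}^2\beta_{(k_1,k_2+1)}^2$ being constant along the lattice points of each summand. The points $(0,0)$ and $(1,0)$ lie in the same summand, and at $(1,0)$ the weights are already those of the Helton-Howe shift, so the common constant equals $1+q^2$. Forcing the same value at the boundary point $(0,0)$ gives
$$
\alpha_{(0,0)}^2+\beta_{(0,0)}^2\beta_{(0,1)}^2=1+q^2.
$$
Here $\alpha_{(0,0)}^2=x_0$, while $\beta_{(0,0)}^2\beta_{(0,1)}^2$ telescopes to $\gamma_{(0,2)}/\gamma_{(0,0)}=(1-x_0)(1+q)^2+x_0q^2$, so the identity becomes
$$
x_0+(1-x_0)(1+q)^2+x_0q^2=1+q^2.
$$
Collecting terms, this simplifies to $(1-x_0)(1+q)^2=(1-x_0)(1+q^2)$; since $x_0\neq1$ in the $2$-atomic alternative, we may cancel $1-x_0$ and conclude $(1+q)^2=1+q^2$, i.e. $q=0$, contradicting the positivity of the weight $\beta_{(1,0)}$.

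Therefore the $2$-atomic case cannot occur, and up to a constant multiple $W_{(\alpha,\beta)}$ must be the Helton-Howe shift; its joint quasinormality then follows from Theorem \ref{Theorem 2}. I expect the only delicate point to be the bookkeeping of the reducing decomposition of $(T_1,T_2^2)$: one must check that $(0,0)$ and $(1,0)$ genuinely belong to the same spherically quasinormal summand (they do, both having second coordinate $0$), so that the value $1+q^2$ produced by the Helton-Howe part may legitimately be imposed at the off-diagonal boundary point $(0,0)$. Once this is secured, the remaining moment computation is routine.
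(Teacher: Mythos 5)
Your proposal is correct and follows essentially the same route as the paper: reduce via Theorem \ref{Theorem 4} to the $2$-atomic Berger measure $\mu=(1-x_0)\delta_{(0,1+q)}+x_0\delta_{(1,q)}$, then use spherical quasinormality of $(T_1,T_2^2)$ to force $\alpha_{(0,0)}^2+\beta_{(0,0)}^2\beta_{(0,1)}^2=1+q^2$, which via $\beta_{(0,0)}^2\beta_{(0,1)}^2=\gamma_{(0,2)}=\int y^2\,d\mu$ collapses to $2q(1-x_0)=0$, a contradiction. The only cosmetic difference is that you spell out why the constant equals $1+q^2$ (evaluating on the reducing summand containing $(1,0)$), a step the paper leaves implicit.
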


\begin{proof}
We will continue to use the notation from Theorem \ref{Theorem 4} for the weights, and we will also let $b:=\beta_{(0,1)}^2$. \ Assume that $W_{\left( \alpha ,\beta \right) }$ is not the Helton-Howe shift. \ By Theorem \ref{Theorem 4} we know that the Berger measure $\mu$ of $W_{\left( \alpha ,\beta \right) }$ is $2$-atomic with atoms at $(0,1+q)$ and $(1,q)$. \ Since $(T_1,T_2^2)$ is spherically quasinormal, we must have $pb+x_0=q^2+1$. \ Now recall that 
$$
pb=\beta_{(0,1)}^2 \beta_{(0,0)}^2=\gamma_{02}=\int y^2 d\mu = (1-x_0)(1+q)^2+x_0 q^2.
$$
It follows that 
$$
(1-x_0)(1+q)^2+x_0 q^2+x_0=q^2+1,
$$
which is equivalent to 
$$
2q(1-x_0)=0.
$$
Since $q \ne 0$ and $x_0 <1$, we obtain a contradiction. \ The proof is complete.
\end{proof}

We now address Problem \ref{P2}(iii). \ For this, we consider a subnormal $2$-variable weighted shift $(T_1,T_2)$ with $2$-atomic Berger measure written as 
$$
\mu \equiv \sigma \delta_{(s,t)}+\tau \delta_{(u,v)}.
$$
where $s,t,u,v \ge 0$, $s<u$, $t \ne v$ $\sigma, \tau >0$ and $\sigma + \tau =1$. \ We seek concrete necessary and sufficient conditions for the spherical quasinormality of $(T_1,T_2)$, $(T_1^2,T_2)$ and $(T_1,T_2^2)$ in terms of $s,t,u,v,\sigma$ and $\tau$.

\begin{lemma} \label{lem301}
Let $(T_1,T_2)$ and $\mu$ be as above, and recall that $(T_1,T_2)$ is spherically quasinormal if and only if 
\begin{equation}
\label{eq301}
\gamma_{\mathbf{k}+\varepsilon_1}+\gamma_{\mathbf{k}+\varepsilon_2}=C\gamma_{\mathbf{k}}
\end{equation}
for some constant $C>0$ and all $\mathbf{k} \in \mathbb{Z}_+^2$ (Theorem \ref{Quasinormal}). \ Then $(T_1,T_2)$ is spherically quasinormal if and only if 
$$
s+t=u+v.
$$
\end{lemma}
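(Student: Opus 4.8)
The plan is to combine Berger's theorem~(\ref{Berger Theorem}) with the moment reformulation of spherical quasinormality in Theorem~\ref{Quasinormal}(iv). First I would record the moments explicitly from the $2$-atomic measure: writing the integration variables as $(x,y)$ to avoid clashing with the atoms, (\ref{Berger Theorem}) gives
\begin{equation*}
\gamma_{(k_1,k_2)}=\int x^{k_1}y^{k_2}\,d\mu(x,y)=\sigma\, s^{k_1}t^{k_2}+\tau\, u^{k_1}v^{k_2},\qquad (k_1,k_2)\in\mathbb{Z}_+^2 .
\end{equation*}
Substituting into the criterion $\gamma_{\mathbf{k}+\varepsilon_1}+\gamma_{\mathbf{k}+\varepsilon_2}=C\gamma_{\mathbf{k}}$ and factoring out the two exponential terms $s^{k_1}t^{k_2}$ and $u^{k_1}v^{k_2}$, the spherical quasinormality of $(T_1,T_2)$ becomes the requirement that
\begin{equation*}
\sigma\,(s+t-C)\,s^{k_1}t^{k_2}+\tau\,(u+v-C)\,u^{k_1}v^{k_2}=0
\end{equation*}
hold for all $(k_1,k_2)\in\mathbb{Z}_+^2$ and some $C>0$.

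The heart of the matter is then a linear-independence statement: the two sequences $\mathbf{k}\mapsto s^{k_1}t^{k_2}$ and $\mathbf{k}\mapsto u^{k_1}v^{k_2}$ are linearly independent over $\mathbb{Z}_+^2$ because the atoms $(s,t)$ and $(u,v)$ are distinct (which is guaranteed here by $s<u$). The cleanest justification is measure-theoretic: a finitely supported signed measure is determined by its moments, so if $a\,\delta_{(s,t)}+b\,\delta_{(u,v)}$ had all moments equal to $0$ it would be the zero measure, forcing $a=b=0$. Granting this and using $\sigma,\tau>0$, the displayed identity forces $s+t-C=0$ and $u+v-C=0$, and hence $s+t=u+v$, with the common value equal to $C$.

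For the converse I would simply set $C:=s+t$ and check directly that
\begin{equation*}
\gamma_{\mathbf{k}+\varepsilon_1}+\gamma_{\mathbf{k}+\varepsilon_2}=\sigma\,(s+t)\,s^{k_1}t^{k_2}+\tau\,(u+v)\,u^{k_1}v^{k_2}=C\,\gamma_{\mathbf{k}}
\end{equation*}
for every $\mathbf{k}$, so that $(T_1,T_2)$ is spherically quasinormal by Theorem~\ref{Quasinormal}(iv). The one point requiring attention is the positivity $C>0$ demanded by that theorem: since $s<u$ we cannot have $s=t=u=v=0$, whence $C=s+t>0$.

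The only real obstacle is the linear-independence step; the rest is bookkeeping. I would take care to make that step valid even when some coordinate of an atom vanishes (for instance $s=0$ or $t=0$), where the convention $0^0=1$ is in force; the measure-determinacy argument covers all such configurations uniformly and spares me a case analysis according to which coordinates are zero.
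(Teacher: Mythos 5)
Your proof is correct, and it is more self-contained than the paper's. Both arguments reduce the lemma to condition (iv) of Theorem \ref{Quasinormal} applied to the explicit $2$-atomic moments $\gamma_{\mathbf{k}}=\sigma s^{k_1}t^{k_2}+\tau u^{k_1}v^{k_2}$, but the executions differ: the paper checks constancy of the ratio $F(\mathbf{k})=(\gamma_{\mathbf{k}+\varepsilon_1}+\gamma_{\mathbf{k}+\varepsilon_2})/\gamma_{\mathbf{k}}$ via the two difference tests $F(\mathbf{k}+\varepsilon_i)=F(\mathbf{k})$ and outsources the resulting algebra to \textit{Mathematica}, arriving at the factorization $s^{k_1}t^{k_2}u^{k_1}v^{k_2}(t-v)(s+t-u-v)\sigma\tau=0$, whereas you factor the moment identity directly into $\sigma(s+t-C)s^{k_1}t^{k_2}+\tau(u+v-C)u^{k_1}v^{k_2}=0$ and kill both coefficients by linear independence of the two point masses. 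Your route buys a fully checkable human proof (including the converse and the positivity of $C$, which the paper leaves implicit) and sidesteps a minor blemish in the paper's displayed condition, whose prefactor $s^{k_1}t^{k_2}u^{k_1}v^{k_2}$ vanishes identically for $k_1,k_2\geq 1$ when some atom coordinate is zero, so that the stated equivalence really has to be read off at small $\mathbf{k}$. One remark: the measure-determinacy justification of linear independence, while perfectly valid (and uniform in the $0^0$ convention), is heavier than needed; evaluating your identity at $\mathbf{k}=(0,0)$ and $\mathbf{k}=(1,0)$ gives $a+b=0$ and $as+bu=0$ with $a=\sigma(s+t-C)$, $b=\tau(u+v-C)$, and $s\neq u$ forces $a=b=0$ immediately.
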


\begin{proof}
In view of (\ref{eq301}), to verify spherical quasinormality we must ensure that 
$$
F(\mathbf{k}):=(\gamma_{\mathbf{k}+\varepsilon_1}+\gamma_{\mathbf{k}+\varepsilon_2})/\gamma_{\mathbf{k}}
$$
is constant for all $\mathbf{k} \in \mathbb{Z}_+^2$. \ This requires two tests: (i) $F(\mathbf{k}+\varepsilon_1)=F(\mathbf{k})$ and (ii) $F(\mathbf{k}+\varepsilon_2)=F(\mathbf{k})$. \ A straightforward calculation using \textit{Mathematica} \cite{Wol} reveals that $F$ satisfies (i) if and only if $F$ satisfies (ii) if and only if  
$$
s^{k_1}t^{k_2}u^{k_1}v^{k_2}(t-v)(s+t-u-v)\sigma \tau=0.
$$
It follows that $(T_1,T_2)$ is spherically quasinormal if and only if 
$$
s+t=u+v,
$$
as desired.
\end{proof}

In a completely similar way, we establish the following result, once again resorting to \textit{Mathematica}.

\begin{lemma} \label{lem302}
Let $(T_1,T_2)$ and $\mu$ be as above. \ Then $(T_1^2,T_2)$ is spherically quasinormal if and only if 
$$
s^2+t=u^2+v.
$$
\end{lemma}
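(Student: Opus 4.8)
The plan is to mirror the proof of Lemma \ref{lem301}, but applied to the power $(T_1^2,T_2)$ in place of $(T_1,T_2)$. First I would translate the spherical quasinormality of $(T_1^2,T_2)$ into a condition on the moments of $\mu$. By Theorem \ref{Quasinormal}(ii), $(T_1^2,T_2)$ is spherically quasinormal if and only if $(T_1^2)^*(T_1^2)+T_2^*T_2=C\cdot I$ for some constant $C>0$. Evaluating both sides on the basis vector $e_{\mathbf{k}}$ yields $\alpha_{\mathbf{k}}^2\alpha_{\mathbf{k}+\varepsilon_1}^2+\beta_{\mathbf{k}}^2=C$; rewriting the weights via $\alpha_{\mathbf{k}}^2=\gamma_{\mathbf{k}+\varepsilon_1}/\gamma_{\mathbf{k}}$ and $\beta_{\mathbf{k}}^2=\gamma_{\mathbf{k}+\varepsilon_2}/\gamma_{\mathbf{k}}$, and telescoping $\alpha_{\mathbf{k}}^2\alpha_{\mathbf{k}+\varepsilon_1}^2=\gamma_{\mathbf{k}+2\varepsilon_1}/\gamma_{\mathbf{k}}$, this becomes
\[
\gamma_{\mathbf{k}+2\varepsilon_1}+\gamma_{\mathbf{k}+\varepsilon_2}=C\gamma_{\mathbf{k}}\qquad(\mathbf{k}\in\mathbb{Z}_+^2),
\]
which is exactly the analogue of (\ref{eq301}) with $\varepsilon_1$ replaced by $2\varepsilon_1$ in the first term.

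Next I would set $G(\mathbf{k}):=(\gamma_{\mathbf{k}+2\varepsilon_1}+\gamma_{\mathbf{k}+\varepsilon_2})/\gamma_{\mathbf{k}}$ and require it to be constant, reducing as before to the two tests $G(\mathbf{k}+\varepsilon_1)=G(\mathbf{k})$ and $G(\mathbf{k}+\varepsilon_2)=G(\mathbf{k})$. Substituting the $2$-atomic moments $\gamma_{(k_1,k_2)}=\sigma s^{k_1}t^{k_2}+\tau u^{k_1}v^{k_2}$ and abbreviating $a:=\sigma s^{k_1}t^{k_2}$, $b:=\tau u^{k_1}v^{k_2}$, one finds $\gamma_{\mathbf{k}}=a+b$ and $\gamma_{\mathbf{k}+2\varepsilon_1}+\gamma_{\mathbf{k}+\varepsilon_2}=a(s^2+t)+b(u^2+v)$, so that
\[
G(\mathbf{k})=\frac{a(s^2+t)+b(u^2+v)}{a+b}
\]
is a weighted average (with nonnegative weights $a,b$) of $s^2+t$ and $u^2+v$. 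The cleanest way to finish is to note that the weight ratio $a/b=(\sigma/\tau)(s/u)^{k_1}(t/v)^{k_2}$ genuinely varies with $\mathbf{k}$, since $s<u$ forces $s/u\ne1$; hence $G$ is constant in $\mathbf{k}$ if and only if the two averaged values coincide, i.e. $s^2+t=u^2+v$. Equivalently, a direct expansion of the two tests—precisely the point at which invoking \textit{Mathematica} \cite{Wol}, as in Lemma \ref{lem301}, is convenient—shows both reduce to a single identity of the shape $(\text{monomial in }s,t,u,v)\cdot\sigma\tau(t-v)(s^2+t-u^2-v)=0$, and the hypotheses $\sigma,\tau>0$ and $t\ne v$ then force $s^2+t=u^2+v$.

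I do not expect a serious obstacle, as the argument is entirely parallel to Lemma \ref{lem301}; the only point I would treat with care is the degenerate case in which an atom coordinate vanishes (say $s=0$), so that the monomial weight $a$ drops out for all $k_1\ge1$ and the ``ratio varies'' step needs a small adjustment. There I would simply compare $G$ along the line $k_1=0$, where both atoms contribute, with $G$ for $k_1\ge1$, where only the second atom survives: this again forces the two averaged values to agree, and the standing hypotheses $\sigma,\tau>0$, $s<u$, $t\ne v$ guarantee the relevant weights never vanish simultaneously, so the equivalence $s^2+t=u^2+v$ persists. The substantive content of the proof is thus the bookkeeping in the first paragraph that converts spherical quasinormality of the power $(T_1^2,T_2)$ into the shifted moment recursion above.
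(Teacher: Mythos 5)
Your proposal is correct and takes essentially the same route as the paper, which gives no details beyond declaring the proof ``completely similar'' to that of Lemma \ref{lem301} via \emph{Mathematica}: your shifted recursion $\gamma_{\mathbf{k}+2\varepsilon_1}+\gamma_{\mathbf{k}+\varepsilon_2}=C\gamma_{\mathbf{k}}$ and the factored identity of the form $(\text{monomial})\cdot\sigma\tau(t-v)(s^2+t-u^2-v)=0$ are exactly the intended computation. One small caution: since $(T_1^2,T_2)$ decomposes as a direct sum over the parity of $k_1$, Theorem \ref{Quasinormal} only forces $(T_1^2)^*T_1^2+T_2^*T_2$ to be a (possibly different) constant on each reducing subspace, so the legitimately necessary tests are $G(\mathbf{k}+2\varepsilon_1)=G(\mathbf{k})$ and $G(\mathbf{k}+\varepsilon_2)=G(\mathbf{k})$ rather than your test (i); this is harmless, because the $\varepsilon_2$-test at $\mathbf{k}=(0,0)$ alone (where both atomic weights contribute with coefficient $\sigma\tau\neq 0$ even in your degenerate cases $s=0$ or $t=0$ or $v=0$) already gives $\sigma\tau(t-v)(s^2+t-u^2-v)=0$ and hence the desired equality.
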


\begin{lemma} \label{lem303}
Let $(T_1,T_2)$ and $\mu$ be as above. \ Then $(T_1^2,T_2)$ is spherically quasinormal if and only if 
$$
s+t^2=u+v^2.
$$
\end{lemma}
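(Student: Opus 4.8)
The plan is to mirror the argument of Lemma \ref{lem301}, now applied to the power pair $(T_1^2,T_2)$ in place of $(T_1,T_2)$. The one structural fact I need is that a power of a $2$-variable weighted shift is again a $2$-variable weighted shift whose moments form a subsequence of the original moments; for $(T_1^2,T_2)$ one has $\gamma^{(2,1)}_{(k_1,k_2)} = \gamma_{(2k_1,k_2)}$. Substituting the $2$-atomic form $\mu = \sigma\delta_{(s,t)} + \tau\delta_{(u,v)}$ into this subsequence produces moments of the shape
$$
\gamma^{(2,1)}_{(k_1,k_2)} = \sigma\, a^{k_1} b^{k_2} + \tau\, c^{k_1} d^{k_2},
$$
with $\sigma,\tau>0$ and two distinct atoms $(a,b),(c,d)\in\mathbb{R}_+^2$ determined from $(s,t)$ and $(u,v)$ by the power operation. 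Thus the power pair is again a subnormal $2$-variable weighted shift carrying a $2$-atomic Berger measure, and Lemma \ref{lem301} becomes directly applicable once the two atoms have been identified.

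First I would make the identification of the atoms explicit and check that they remain distinct and lie in $\mathbb{R}_+^2$, using the standing hypotheses $s<u$ and $t\ne v$. Second, by Theorem \ref{Quasinormal}(iv), spherical quasinormality of $(T_1^2,T_2)$ is equivalent to constancy over $\mathbf{k}\in\mathbb{Z}_+^2$ of
$$
F(\mathbf{k}) := \frac{\gamma^{(2,1)}_{\mathbf{k}+\varepsilon_1} + \gamma^{(2,1)}_{\mathbf{k}+\varepsilon_2}}{\gamma^{(2,1)}_{\mathbf{k}}}.
$$
Third, exactly as in Lemma \ref{lem301}, I would run the two constancy tests $F(\mathbf{k}+\varepsilon_1)=F(\mathbf{k})$ and $F(\mathbf{k}+\varepsilon_2)=F(\mathbf{k})$ with \textit{Mathematica} \cite{Wol}. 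As in that computation, both tests collapse to the vanishing of a single product: a monomial in the atom coordinates, times $\sigma\tau$, times the factor $(t-v)$, times the coordinate-sum defect of the two atoms. Since $\sigma\tau>0$, the monomial is generically nonzero, and $t\ne v$, constancy of $F$ is equivalent to the vanishing of that defect, which I would record as the displayed equation $s+t^2=u+v^2$.

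The main obstacle is precisely the coordinate bookkeeping in the first and third steps: one must track correctly which coordinate the power acts upon, so that the two atoms $(a,b),(c,d)$, and hence the coordinate-sum defect, are assembled in the exact form $s+t^2-u-v^2$, i.e. as the difference of the two sides of the displayed equation, rather than in the companion form produced by acting on the other coordinate. This is the only delicate point, since the factorization of each test as $(\text{monomial})\times\sigma\tau\times(t-v)\times(\text{defect})$ is robust and parallels Lemma \ref{lem301} verbatim; the remaining ingredients — the nonnegativity and distinctness of the atoms, and the final equivalence via $\sigma\tau>0$ and $t\ne v$ — are routine. Once the defect is confirmed to be $s+t^2-u-v^2$, the equivalence with spherical quasinormality is immediate and the proof is complete.
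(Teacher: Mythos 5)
Your plan --- reduce to Lemma \ref{lem301} by identifying the Berger measure of the power pair --- is in the right spirit; indeed the paper offers no separate computation here, declaring the proof ``completely similar'' to that of Lemma \ref{lem301} (a fresh \emph{Mathematica} run of the two constancy tests), so your route via transformed atoms is a legitimate, even tidier, variant. But your structural premise is false as stated: a power of a $2$-variable weighted shift is \emph{not} again a $2$-variable weighted shift on $\ell^2(\mathbb{Z}_+^2)$ with subsequence moments. As the paper itself arranges before Example \ref{ex1}, the power decomposes as an orthogonal direct sum of $2$-variable weighted shifts over the reducing subspaces $\mathcal{H}^{(m,n)}_{(p,q)}$; for $(T_1,T_2^2)$ these are $\mathcal{H}^{(1,2)}_{(0,0)}$ and $\mathcal{H}^{(1,2)}_{(0,1)}$, and only the restriction to the first has moments given by a subsequence formula, namely $\gamma_{(k_1,2k_2)}=\sigma s^{k_1}(t^2)^{k_2}+\tau u^{k_1}(v^2)^{k_2}$. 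Spherical quasinormality of the full power pair is equivalent to spherical quasinormality of \emph{both} summands, so your subsequence check yields necessity of the displayed condition but not sufficiency. The repair is short and you should make it explicitly: the second summand has Berger measure $\bigl(\sigma t\,\delta_{(s,t^2)}+\tau v\,\delta_{(u,v^2)}\bigr)/(\sigma t+\tau v)$, i.e., the \emph{same} atoms with rescaled densities (one atom dropping out if $t=0$ or $v=0$); since, as you and the paper's Remark both observe, the criterion depends only on $\supp\mu$ and amounts to constancy of $x+y$ on the support, the single equation $s+t^2=u+v^2$ disposes of both summands at once.

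The second problem is the coordinate bookkeeping you yourself flagged as ``the main obstacle'': a proof must resolve it, not merely name it, and your one explicit formula resolves it the wrong way. The identity $\gamma^{(2,1)}_{(k_1,k_2)}=\gamma_{(2k_1,k_2)}$ squares the \emph{first} coordinate, produces atoms $(s^2,t),(u^2,v)$, and hence delivers $s^2+t=u^2+v$ --- that is Lemma \ref{lem302}, not the displayed equation. The displayed equation $s+t^2=u+v^2$ corresponds to squaring the second coordinate: the pair printed in Lemma \ref{lem303} is a typo for $(T_1,T_2^2)$, as the sentence introducing the three lemmas and the way Theorem \ref{thm 3} invokes them make plain. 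So the correct computation uses $\gamma^{(1,2)}_{(k_1,k_2)}=\gamma_{(k_1,2k_2)}$ (on the $(0,0)$-summand), with atoms $(s,t^2)$ and $(u,v^2)$; these satisfy the standing hypotheses of Lemma \ref{lem301} --- first coordinates distinct since $s<u$, second coordinates distinct since $t\neq v$ and $t,v\geq 0$ --- after which that lemma (or the identical \emph{Mathematica} factorization, with $\sigma\tau>0$ and $t^2\neq v^2$) gives exactly $s+t^2=u+v^2$. With these two corrections --- the direct-sum argument for sufficiency and the committed choice of coordinate --- your proof closes.
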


\begin{remark}
(i) \ Observe that the conditions in Lemmas \ref{lem301}, \ref{lem302} and \ref{lem303} do not involve the densities $\sigma$ or $\tau$. \ Thus, spherical quasinormality for these $2$-variable weighted shifts depends only on the $\supp \mu$. \newline
(ii) Observe that if the pairs $(s,t)$ and $(u,v)$ satisfy simultaneously the equations in Lemmas \ref{lem301}, \ref{lem302} and \ref{lem303}, then one must have 
$$
s+t=u+v,
$$
$$
s^2+t=u^2+v,
$$
and
$$
s+t^2=u+v^2.
$$
By simple algebraic manipulations (see the Proof of Theorem \ref{thm 3} below), and keeping in mind that $s<u$, one easily obtains that $t=1-s$, $u=1-s$ and $v=s$. \ It follows that any choice of $s$ in the interval $[0,\frac{1}{2})$ and any choice of $\sigma$ produces an example of a $2$-variable weighted shift with $2$-atomic Berger measure and such that $(T_1,T_2)$, $(T_1^2,T_2)$ and $(T_1,T_2^2)$ are each spherically quasinormal. \ Notice that this does not contradict Theorem \ref{Theorem 4}, since here we do not assume that $\alpha_{(3,0)}=1.$
\end{remark}

We are now ready to give an answer to Problem \ref{P2}(iii).

\begin{theorem} \label{thm 3}
There exists a subnormal $2$-variable weighted shift $(T_1,T_2)$ with $2$-atomic Berger measure such that: \newline
(i) $(T_1^2,T_2)$ is spherically quasinormal; \newline
(ii) $(T_1,T_2^2)$ is spherically quasinormal; and \newline
(iii) $(T_1,T_2)$ is not spherically quasinormal.
\end{theorem}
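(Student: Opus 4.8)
The plan is to construct an explicit example by exploiting the three characterizations in Lemmas \ref{lem301}, \ref{lem302} and \ref{lem303}, which reduce spherical quasinormality of $(T_1,T_2)$, $(T_1^2,T_2)$ and $(T_1,T_2^2)$ to three purely algebraic conditions on the support points $(s,t)$ and $(u,v)$ of a $2$-atomic Berger measure. Since we want (i) and (ii) to hold but (iii) to \emph{fail}, the key observation is that the three conditions are \emph{not} independent: if all three held simultaneously, the Remark preceding this theorem shows the support points would be forced into a rigid configuration. So instead I would impose only the two conditions $s^2+t=u^2+v$ (for $(T_1^2,T_2)$) and $s+t^2=u+v^2$ (for $(T_1,T_2^2)$), and then arrange the support points so that the first condition $s+t=u+v$ \emph{fails}.

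First I would set up the algebra. Writing $d:=u-s>0$ (since $s<u$) and subtracting the equation $s+t^2=u+v^2$ from $s^2+t=u^2+v$, I would solve the resulting linear-in-differences system for $t-v$ in terms of $s,u$. Concretely, $s^2-u^2=v-t$ and $t^2-v^2=u-s$, i.e.\ $(t-v)(t+v)=u-s=d$ and $(s-u)(s+u)=v-t$, so $(t-v)=(s+u)d$ and simultaneously $(t-v)(t+v)=d$; dividing gives $t+v=\tfrac{1}{s+u}$. This is a clean finite system, and picking any admissible $s<u$ with all coordinates nonnegative yields explicit values of $t,v$. The point is to verify that the solution generically has $s+t\neq u+v$, so that Lemma \ref{lem301} guarantees $(T_1,T_2)$ is \emph{not} spherically quasinormal, while Lemmas \ref{lem302} and \ref{lem303} guarantee the two powers \emph{are}.

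Next, I would confirm that the chosen numbers actually define a legitimate subnormal $2$-variable weighted shift. This is immediate from Berger's theorem (\ref{Berger Theorem}): any $2$-atomic probability measure $\mu=\sigma\delta_{(s,t)}+\tau\delta_{(u,v)}$ on the relevant rectangle, with $\sigma,\tau>0$ and $\sigma+\tau=1$, is automatically the Berger measure of a commuting subnormal pair, so subnormality is free once the atoms lie in $\mathbb{R}_+^2$ and are distinct with $s<u$, $t\neq v$. The densities $\sigma,\tau$ play no role in any of the three spherical-quasinormality tests (as the Remark notes), so I may choose them arbitrarily.

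The main obstacle is not any single step but the verification that an \emph{admissible} choice exists: I must check that the solution of the two-equation system produces genuinely nonnegative coordinates with $s<u$ and $t\neq v$, and simultaneously with $s+t\neq u+v$, so that (iii) indeed fails rather than accidentally holding. Since the Remark already traces out the locus where all three conditions coincide ($t=1-s$, $u=1-s$, $v=s$), I would deliberately pick a point \emph{off} that locus; the cleanest route is to exhibit one concrete numerical pair $(s,t),(u,v)$ (for instance, by fixing a convenient value of $s$ and reading off $t,v$ from $t+v=\tfrac{1}{s+u}$ and $t-v=(s+u)(u-s)$), display the three resulting scalar equalities, and note that $s+t\neq u+v$ by direct substitution. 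Once such a witness is in hand, Lemmas \ref{lem301}--\ref{lem303} close the argument with no further computation.
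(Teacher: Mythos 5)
Your proposal is correct and follows essentially the same route as the paper: both reduce the problem to the algebraic conditions of Lemmas \ref{lem301}--\ref{lem303}, solve the two power conditions to get $t-v=(s+u)(u-s)$ and $t+v=\tfrac{1}{s+u}$, and observe that the base condition $s+t=u+v$ then fails precisely when $(u-s)(s+u-1)\neq 0$, i.e.\ for any admissible choice with $s+u\neq 1$. The paper simply records this parametrization explicitly rather than exhibiting a single numerical witness, but the argument is the same.
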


\begin{proof}
By Lemmas \ref{lem301}, \ref{lem302} and \ref{lem303}, it suffices to find a nonnegative real numbers $s,t,u,v$ such that $s<u$, $t \ne v$,
\begin{equation} \label{eq401}
s^2+t=u^2+v,
\end{equation}
\begin{equation} \label{eq402}
s+t^2=u+v^2
\end{equation}
and
\begin{equation} \label{eq403}
s+u \ne t+v.
\end{equation}
It is easy to see that (\ref{eq401}) implies that 
$$
v=s^2+t-u^2.
$$
When this value is inserted into (\ref{eq402}) one gets
$$
t=\frac{1-s^3-s^2u+su^2+u^3}{2(s+u)}.
$$
As a result, (\ref{eq403}) becomes
$$
(u-s)(s+u-1) \ne 0.
$$
Thus, to complete the proof all we need is to choose $s$ and $u$ such that $s+u \ne 1$.
\end{proof}

We next consider Problem \ref{Problem 3}. \ For this, we consider a class $%
\mathcal{A}_{TS}$ of commuting $2$-variable weighted shifts $W_{(\alpha
,\beta )}$ for which the toral and spherical Aluthge transforms agree, that
is, $\widetilde{W}_{(\alpha ,\beta )}=\widehat{W}_{(\alpha ,\beta )}$ (cf.
\cite{CuYo7}, \cite{CuYo6}). \ Using Lemmas \ref{CartAlu} and \ref{PolarAlu}%
, it suffices to restrict attention to the equalities

\begin{equation*}
\sqrt{\alpha _{\mathbf{k}}\alpha _{\mathbf{k}+\mathbf{\varepsilon }_{1}}}%
=\alpha _{\mathbf{k}}\frac{(\alpha _{\mathbf{k}+\mathbf{\epsilon }%
_{1}}^{2}+\beta _{\mathbf{k}+\mathbf{\epsilon }_{1}}^{2})^{1/4}}{(\alpha _{%
\mathbf{k}}^{2}+\beta _{\mathbf{k}}^{2})^{1/4}}\text{ and }\sqrt{\beta _{%
\mathbf{k}}\beta _{\mathbf{k}+\mathbf{\varepsilon }_{2}}}=\beta _{\mathbf{k}}%
\frac{(\alpha _{\mathbf{k}+\mathbf{\epsilon }_{2}}^{2}+\beta _{\mathbf{k}+%
\mathbf{\epsilon }_{2}}^{2})^{1/4}}{(\alpha _{\mathbf{k}}^{2}+\beta _{%
\mathbf{k}}^{2})^{1/4}}
\end{equation*}%
for all $\mathbf{k}\in \mathbb{Z}_{+}^{2}$. \ Thus, we easily see that $%
\widetilde{W}_{(\alpha ,\beta )}=\widehat{W}_{(\alpha ,\beta )}$ if and only
if%
\begin{equation*}
\alpha _{\mathbf{k}+\mathbf{\varepsilon }_{1}}^{2}(\alpha _{\mathbf{k}%
}^{2}+\beta _{\mathbf{k}}^{2})=\alpha _{\mathbf{k}}^{2}(\alpha _{\mathbf{k}+%
\mathbf{\epsilon }_{1}}^{2}+\beta _{\mathbf{k}+\mathbf{\epsilon }_{1}}^{2})%
\text{ and }\beta _{\mathbf{k}+\mathbf{\varepsilon }_{2}}^{2}(\alpha _{%
\mathbf{k}}^{2}+\beta _{\mathbf{k}}^{2})=\beta _{\mathbf{k}}^{2}(\alpha _{%
\mathbf{k}+\mathbf{\epsilon }_{2}}^{2}+\beta _{\mathbf{k}+\mathbf{\epsilon }%
_{2}}^{2})
\end{equation*}%
for all $\mathbf{k}\in \mathbb{Z}_{+}^{2}$, which is equivalent to
\begin{equation*}
\alpha _{\mathbf{k}+\mathbf{\varepsilon }_{1}}\beta _{\mathbf{k}}=\alpha _{%
\mathbf{k}}\beta _{\mathbf{k}+\mathbf{\varepsilon }_{1}}\text{ and }\beta _{%
\mathbf{k}+\mathbf{\varepsilon }_{2}}\alpha _{\mathbf{k}}=\beta _{\mathbf{k}%
}\alpha _{\mathbf{k}+\mathbf{\varepsilon }_{2}}
\end{equation*}%
for all $\mathbf{k}\in \mathbb{Z}_{+}^{2}$. \ If we now recall condition (%
\ref{commuting}) for the commutativity of $W_{(\alpha ,\beta )}$, that is, $%
\alpha _{\mathbf{k}}\beta _{\mathbf{k}+\mathbf{\epsilon _{1}}}=\beta _{%
\mathbf{k}}\alpha _{\mathbf{k}+\mathbf{\epsilon _{2}}}$ for all $\mathbf{k}%
\in \mathbb{Z}_{+}^{2}$, we see at once that $\widetilde{W}_{(\alpha ,\beta
)}=\widehat{W}_{(\alpha ,\beta )}$ if and only if $\alpha _{\mathbf{k}+%
\mathbf{\epsilon _{1}}}=\alpha _{\mathbf{k}+\mathbf{\epsilon _{2}}}$ and $%
\beta _{\mathbf{k}+\mathbf{\epsilon _{2}}}=\beta _{\mathbf{k}+\mathbf{%
\epsilon _{1}}}$ for all $\mathbf{k}\in \mathbb{Z}_{+}^{2}$. \ It follows
that the weight diagram for $W_{(\alpha ,\beta )}$ is completely determined
by the $0$-th row and the weight $\beta _{(0,0)}$ (see Figure \ref{Figure2}(ii)).

\setlength{\unitlength}{1mm} \psset{unit=1mm}
\begin{figure}[th]
\begin{center}
\begin{picture}(135,70)

\psline{->}(20,20)(70,20)
\psline(20,35)(68,35)
\psline(20,50)(68,50)
\psline(20,65)(68,65)
\psline{->}(20,20)(20,70)
\psline(35,20)(35,68)
\psline(50,20)(50,68)
\psline(65,20)(65,68)

\put(12,16){\footnotesize{$(0,0)$}}
\put(31.5,16){\footnotesize{$(1,0)$}}
\put(46.5,16){\footnotesize{$(2,0)$}}
\put(61.5,16){\footnotesize{$(3,0)$}}

\put(25,21){\footnotesize{$\sqrt{x}$}}
\put(40,21){\footnotesize{$1$}}
\put(55,21){\footnotesize{$1$}}
\put(66,21){\footnotesize{$\cdots$}}

\put(25,36){\footnotesize{$1$}}
\put(40,36){\footnotesize{$1$}}
\put(55,36){\footnotesize{$1$}}
\put(66,36){\footnotesize{$\cdots$}}

\put(25,51){\footnotesize{$1$}}
\put(40,51){\footnotesize{$1$}}
\put(55,51){\footnotesize{$1$}}
\put(66,51){\footnotesize{$\cdots$}}

\put(26,66){\footnotesize{$\cdots$}}
\put(41,66){\footnotesize{$\cdots$}}
\put(56,66){\footnotesize{$\cdots$}}
\put(66,66){\footnotesize{$\cdots$}}

\psline{->}(35,14)(50,14)
\put(42,10){$\rm{T}_1$}
\psline{->}(10,35)(10,50)
\put(4,42){$\rm{T}_2$}

\put(11,34){\footnotesize{$(0,1)$}}
\put(11,49){\footnotesize{$(0,2)$}}
\put(11,64){\footnotesize{$(0,3)$}}

\put(20,26){\footnotesize{$\sqrt{xq}$}}
\put(20,41){\footnotesize{$\sqrt{q}$}}
\put(20,56){\footnotesize{$\sqrt{q}$}}
\put(21,66){\footnotesize{$\vdots$}}

\put(35,26){\footnotesize{$\sqrt{q}$}}
\put(35,41){\footnotesize{$\sqrt{q}$}}
\put(35,56){\footnotesize{$\sqrt{q}$}}
\put(36,66){\footnotesize{$\vdots$}}

\put(50,26){\footnotesize{$\sqrt{q}$}}
\put(50,41){\footnotesize{$\sqrt{q}$}}
\put(50,56){\footnotesize{$\sqrt{q}$}}
\put(51,66){\footnotesize{$\vdots$}}

\put(10,6){(i)}


\put(85,8){(ii)}

\psline{->}(95,14)(110,14)
\put(102,10){$\rm{T}_1$}
\psline{->}(77,35)(77,50)
\put(72,42){$\rm{T}_2$}

\psline{->}(80,20)(130,20)
\psline(80,35)(128,35)
\psline(80,50)(128,50)
\psline(80,65)(128,65)

\psline{->}(80,20)(80,70)
\psline(95,20)(95,68)
\psline(110,20)(110,68)
\psline(125,20)(125,68)

\put(75,16){\footnotesize{$(0,0)$}}
\put(91,16){\footnotesize{$(1,0)$}}
\put(106,16){\footnotesize{$(2,0)$}}
\put(121,16){\footnotesize{$(3,0)$}}

\put(85,21){\footnotesize{${\alpha_{00}}$}}
\put(100,21){\footnotesize{${\alpha_{10}}$}}
\put(115,21){\footnotesize{${\alpha_{20}}$}}
\put(126,21){\footnotesize{$\cdots$}}

\put(85,36){\footnotesize{${\alpha_{10}}$}}
\put(100,36){\footnotesize{${\alpha_{20}}$}}
\put(115,36){\footnotesize{${\alpha_{30}}$}}
\put(126,36){\footnotesize{$\cdots$}}

\put(85,51){\footnotesize{${\alpha_{20}}$}}
\put(100,51){\footnotesize{${\alpha_{30}}$}}
\put(115,51){\footnotesize{${\alpha_{40}}$}}
\put(126,51){\footnotesize{$\cdots$}}

\put(85,66){\footnotesize{$\cdots$}}
\put(100,66){\footnotesize{$\cdots$}}
\put(115,66){\footnotesize{$\cdots$}}
\put(126,66){\footnotesize{$\cdots$}}

\put(80,26){\footnotesize{$\beta_{00}$}}
\put(80,41){\footnotesize{$\frac{\alpha_{10}\beta_{00}}{\alpha_{00}}$}}
\put(80,56){\footnotesize{$\frac{\alpha_{20}\beta_{00}}{\alpha_{00}}$}}
\put(81,66){\footnotesize{$\vdots$}}

\put(95,26){\footnotesize{$\frac{\alpha_{10}\beta_{00}}{\alpha_{00}}$}}
\put(95,41){\footnotesize{$\frac{\alpha_{20}\beta_{00}}{\alpha_{00}}$}}
\put(95,56){\footnotesize{$\frac{\alpha_{30}\beta_{00}}{\alpha_{00}}$}}
\put(96,66){\footnotesize{$\vdots$}}

\put(110,26){\footnotesize{$\frac{\alpha_{20}\beta_{00}}{\alpha_{00}}$}}
\put(110,41){\footnotesize{$\frac{\alpha_{30}\beta_{00}}{\alpha_{00}}$}}
\put(110,56){\footnotesize{$\frac{\alpha_{40}\beta_{00}}{\alpha_{00}}$}}
\put(111,66){\footnotesize{$\vdots$}}
\end{picture}
\end{center}
\caption{Weight diagram of the 2-variable weighted shift in Theorem \protect
\ref{Theorem 4} and weight diagram of the 2-variable weighted shift in
Theorem \protect\ref{Theorem 5}, respectively.}
\label{Figure2}
\end{figure}
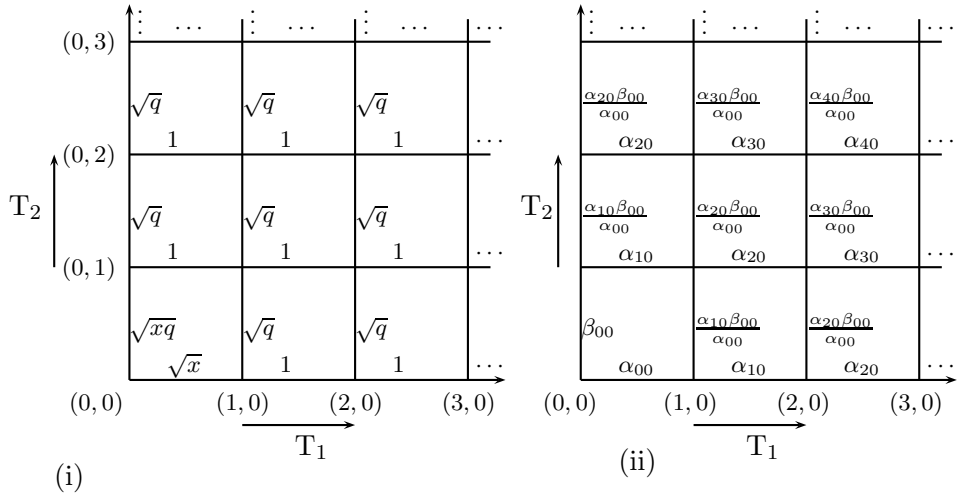

Next, we give an answer to Problem \ref{P3}.

\begin{theorem}
\label{Theorem 5}If $\widetilde{W}_{(\alpha ,\beta )}=\widehat{W}_{(\alpha
,\beta )}=W_{(\alpha ,\beta )}$, then $W_{(\alpha ,\beta )}$ is (jointly)
quasinormal.
\end{theorem}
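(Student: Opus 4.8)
The plan is to combine the two hypotheses $\widetilde{W}_{(\alpha,\beta)}=\widehat{W}_{(\alpha,\beta)}$ and $\widehat{W}_{(\alpha,\beta)}=W_{(\alpha,\beta)}$ with the commutativity relation (\ref{commuting}) to force all $\alpha$-weights to coincide and all $\beta$-weights to coincide; this is precisely (joint) quasinormality by the analysis leading to Theorem \ref{Theorem 2}. First I would invoke the computation carried out just before the theorem, which shows that $\widetilde{W}_{(\alpha,\beta)}=\widehat{W}_{(\alpha,\beta)}$ is equivalent to $\alpha_{\mathbf{k}+\varepsilon_1}=\alpha_{\mathbf{k}+\varepsilon_2}$ and $\beta_{\mathbf{k}+\varepsilon_1}=\beta_{\mathbf{k}+\varepsilon_2}$ for all $\mathbf{k}$. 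Each of these equalities relates two weights sitting on the same antidiagonal $k_1+k_2=\mathrm{const}$, so iterating them down to the bottom edge shows that $\alpha_{(k_1,k_2)}$ and $\beta_{(k_1,k_2)}$ depend only on $k_1+k_2$ (this is exactly the weight diagram in Figure \ref{Figure2}(ii)). I would therefore set $a_n:=\alpha_{(n,0)}$ and $b_n:=\beta_{(n,0)}$, so that $\alpha_{(k_1,k_2)}=a_{k_1+k_2}$ and $\beta_{(k_1,k_2)}=b_{k_1+k_2}$.

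Next I would exploit the hypothesis $\widehat{W}_{(\alpha,\beta)}=W_{(\alpha,\beta)}$. Since all weights are strictly positive, the explicit formula for the spherical Aluthge weights recorded just before the theorem gives $\widehat{\alpha}_{\mathbf{k}}=\alpha_{\mathbf{k}}(\alpha_{\mathbf{k}+\varepsilon_1}^2+\beta_{\mathbf{k}+\varepsilon_1}^2)^{1/4}/(\alpha_{\mathbf{k}}^2+\beta_{\mathbf{k}}^2)^{1/4}$, and the corresponding expression for $\widehat{\beta}_{\mathbf{k}}$. Setting $\widehat{\alpha}_{\mathbf{k}}=\alpha_{\mathbf{k}}$ and $\widehat{\beta}_{\mathbf{k}}=\beta_{\mathbf{k}}$ forces $\alpha_{\mathbf{k}+\varepsilon_i}^2+\beta_{\mathbf{k}+\varepsilon_i}^2=\alpha_{\mathbf{k}}^2+\beta_{\mathbf{k}}^2$ for $i=1,2$; since $\mathbb{Z}_+^2$ is connected under the two elementary shifts, the quantity $\alpha_{\mathbf{k}}^2+\beta_{\mathbf{k}}^2$ is a global constant $C$. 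By Theorem \ref{Quasinormal} this is exactly spherical quasinormality (equivalently, one could cite Proposition \ref{Prop1}), and in the notation above it reads $a_n^2+b_n^2=C$ for all $n\geq0$.

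The decisive step is the commutativity relation (\ref{commuting}), namely $\beta_{\mathbf{k}+\varepsilon_1}\alpha_{\mathbf{k}}=\alpha_{\mathbf{k}+\varepsilon_2}\beta_{\mathbf{k}}$, which in the notation $a_n,b_n$ becomes $b_{n+1}a_n=a_{n+1}b_n$, i.e. $b_{n+1}/a_{n+1}=b_n/a_n$. Hence the ratio $b_n/a_n\equiv\rho$ is independent of $n$. Substituting $b_n=\rho a_n$ into $a_n^2+b_n^2=C$ yields $a_n^2(1+\rho^2)=C$, so $a_n$ is constant, and therefore $b_n=\rho a_n$ is constant as well. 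Consequently $\alpha_{(k_1,k_2)}=\alpha_{(k_1+1,k_2)}$ and $\beta_{(k_1,k_2)}=\beta_{(k_1,k_2+1)}$ for all $\mathbf{k}$; by (\ref{cond3}) and the discussion preceding Theorem \ref{Theorem 2}, this means that $W_{(\alpha,\beta)}$ is, up to a constant multiple, the Helton-Howe shift, and in particular (jointly) quasinormal.

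I expect the main obstacle to be conceptual rather than computational: spherical quasinormality alone does \emph{not} imply joint quasinormality (Example \ref{ex1}), so the argument must genuinely use the additional rigidity supplied by $\widetilde{W}=\widehat{W}$ together with commutativity. The crux is recognizing that commutativity converts the ``constant on antidiagonals'' structure into a constant ratio $b_n/a_n$, which is precisely what upgrades the sphere condition $a_n^2+b_n^2=C$ into individual constancy of the $a_n$ and the $b_n$. The only technical care needed is that all weights be strictly positive, so that the divisions performed are legitimate.
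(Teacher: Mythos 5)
Your proof is correct, and it reaches the conclusion by a route that differs from the paper's in the decisive step. Both arguments begin identically: the computation preceding the theorem shows that $\widetilde{W}_{(\alpha ,\beta )}=\widehat{W}_{(\alpha ,\beta )}$ forces $\alpha _{\mathbf{k}+\varepsilon _{1}}=\alpha _{\mathbf{k}+\varepsilon _{2}}$ and $\beta _{\mathbf{k}+\varepsilon _{1}}=\beta _{\mathbf{k}+\varepsilon _{2}}$, hence the weights depend only on $k_{1}+k_{2}$ (Figure \ref{Figure2}(ii)), and both use Theorem \ref{Quasinormal} to obtain $\alpha _{\mathbf{k}}^{2}+\beta _{\mathbf{k}}^{2}\equiv C$. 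The paper then exploits the hypothesis $\widetilde{W}_{(\alpha ,\beta )}=W_{(\alpha ,\beta )}$ directly: the toral fixed-point equation $\sqrt{\alpha _{(k_{1},0)}\alpha _{(k_{1}+1,0)}}=\alpha _{(k_{1},0)}$ flattens the $0$-th row at once, the sphere condition then flattens the $\beta $'s along that row, and the subnormality of $T_{2}$ (via the rigidity of subnormal unilateral shifts with two consecutive equal weights, \cite{Sta2}) is used to propagate the constancy up the columns. You instead never use $\widetilde{W}_{(\alpha ,\beta )}=W_{(\alpha ,\beta )}$ beyond its consequence $\widetilde{W}_{(\alpha ,\beta )}=\widehat{W}_{(\alpha ,\beta )}$: writing $a_{n}=\alpha _{(n,0)}$ and $b_{n}=\beta _{(n,0)}$, you apply the commutativity relation (\ref{commuting}) at $(n,0)$, together with $\alpha _{(n,1)}=a_{n+1}$ from the antidiagonal structure, to get $b_{n+1}/a_{n+1}=b_{n}/a_{n}$, and the constant ratio combined with $a_{n}^{2}+b_{n}^{2}=C$ forces $a_{n}$ and $b_{n}$ to be constant. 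This is a purely algebraic finish that dispenses with the appeal to subnormality and Stampfli's theorem, a small but genuine simplification, and it costs nothing since the constant-ratio identity is immediate from (\ref{commuting}); the only hypothesis it needs beyond the paper's is the strict positivity of the weights, which is built into the definition of a $2$-variable weighted shift here. Both proofs end the same way, identifying the resulting constant-weight shift as the Helton--Howe shift up to constant multiples and concluding joint quasinormality via Theorem \ref{Theorem 2}.
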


\begin{proof}
Since $\widetilde{W}_{(\alpha ,\beta )}=\widehat{W}_{(\alpha ,\beta )}$, the
weight diagram of $W_{(\alpha ,\beta )}$ must be of the form shown in Figure \ref{Figure2}(ii). \ Since $\widetilde{W}_{(\alpha ,\beta )}=W_{(\alpha ,\beta )}$, we note
that for $k_{1}\geq 0$
\begin{equation}
\sqrt{\alpha _{(k_{1},0)}\alpha _{(k_{1}+1,0)}}=\alpha
_{(k_{1},0)}\Longrightarrow \alpha _{(k_{1},0)}=\alpha _{(k_{1}+1,0)}\text{.}
\label{equ14}
\end{equation}%
By Theorem \ref{Quasinormal}, we also have that for $k_{1}\geq 0$
\begin{equation}
\beta _{(k_{1},0)}=\beta _{(k_{1}+1,0)}\text{.}  \label{equ15}
\end{equation}%
By the condition (\ref{commuting}) for the commutativity of $W_{(\alpha
,\beta )}$, we have that for $k_{1}\geq 0$
\begin{equation}
\alpha _{(k_{1},1)}=\alpha _{(k_{1}+1,1)}\text{.}  \label{equ16}
\end{equation}%
By Theorem \ref{Quasinormal} again, we have that for $k_{1}\geq 0$
\begin{equation}
\beta _{(k_{1},1)}=\beta _{(k_{1}+1,1)}\text{.}  \label{equ17}
\end{equation}%
Since $W_{(\alpha ,\beta )}\equiv (T_{1},T_{2})$ is spherically quasinormal,
it is subnormal, so that $T_{1}$ and $T_{2}$ are both subnormal. \ By (\ref{equ15}) we have
$\beta_{(0,0)}=\beta_{(1,0)}$. \ Also, from Figure \ref{Figure2}(ii), we have $\beta_{1,0)}=\beta_{(0,1)}$, and
as a result, $\beta_{(0,0)}=\beta_{(0,1)}$. \ Since $T_2$ is subnormal, it follows that $\beta_{(0,0)}=\beta_{(0,k_2)}$ for all $k_2 \ge 1$. \ It is now straightforward that $\beta _{(0,0)}=\beta _{(k_{1},k_{2})}$
for all $k_{1},k_{2}\geq 0$. \ Similarly, $\alpha _{(0,0)}=\alpha _{(k_{1},k_{2})}$ for all $k_1,k_2 \ge 0$. \ It follows that 
\begin{equation*}
W_{(\alpha ,\beta )}\cong \left( I\otimes \alpha _{(0,0)}\cdot U_{+},\beta
_{(0,0)}\cdot U_{+}\otimes I\right) \text{.}
\end{equation*}%
Therefore, by Theorem %
\ref{Theorem 2}, $W_{(\alpha ,\beta )}$ is (jointly) quasinormal, as desired.
\end{proof}




\section{Appendix}

For the reader's convenience, in this section, we gather several well known
auxiliary results which are needed for the proofs of the main results in
this article.

\begin{lemma}
\label{CartAlu} Let $W_{(\alpha ,\beta )}\equiv \left( T_{1},T_{2}\right) $
be a $2$-variable weighted shift. \ Then for all $\mathbf{k}\in \mathbb{Z}%
_{+}^{2}$,
\begin{equation*}
\widetilde{T}_{1}e_{\mathbf{k}}=\sqrt{\alpha _{\mathbf{k}}\alpha _{\mathbf{k}%
+\mathbf{\varepsilon }_{1}}}e_{\mathbf{k}+\mathbf{\varepsilon }_{1}}\text{
and }\widetilde{T}_{2}e_{\mathbf{k}}=\sqrt{\beta _{\mathbf{k}}\beta _{%
\mathbf{k}+\mathbf{\varepsilon }_{2}}}e_{\mathbf{k}+\mathbf{\varepsilon }%
_{2}}
\end{equation*}
\end{lemma}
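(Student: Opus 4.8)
The plan is to reduce the statement to a coordinatewise computation, since the toral Aluthge transform acts on each coordinate separately through the single-variable formula $\widetilde{T}_i = |T_i|^{\frac{1}{2}} U_i |T_i|^{\frac{1}{2}}$, where $T_i = U_i|T_i|$ is the canonical polar decomposition. By the symmetry of the two coordinates (with $\alpha$ and $\varepsilon_1$ replaced by $\beta$ and $\varepsilon_2$), it suffices to establish the formula for $\widetilde{T}_1$; the formula for $\widetilde{T}_2$ then follows verbatim.

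First I would compute $|T_1|$. From $T_1 e_{\mathbf{k}} = \alpha_{\mathbf{k}} e_{\mathbf{k}+\varepsilon_1}$ one obtains $T_1^{\ast} e_{\mathbf{k}} = \alpha_{\mathbf{k}-\varepsilon_1} e_{\mathbf{k}-\varepsilon_1}$ (reading the right-hand side as $0$ when $k_1=0$), whence $T_1^{\ast}T_1 e_{\mathbf{k}} = \alpha_{\mathbf{k}}^2 e_{\mathbf{k}}$. Thus $|T_1|$ is the diagonal operator given by $|T_1| e_{\mathbf{k}} = \alpha_{\mathbf{k}} e_{\mathbf{k}}$, and taking the positive square root yields $|T_1|^{\frac{1}{2}} e_{\mathbf{k}} = \sqrt{\alpha_{\mathbf{k}}}\, e_{\mathbf{k}}$.

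Next I would identify $U_1$. Because every weight $\alpha_{\mathbf{k}}$ is strictly positive, $T_1$ sends the orthonormal basis to a family of pairwise orthogonal nonzero vectors, so $\ker T_1 = \{0\}$ and $U_1$ is the isometry determined by $U_1 e_{\mathbf{k}} = e_{\mathbf{k}+\varepsilon_1}$; indeed $U_1 |T_1| e_{\mathbf{k}} = \alpha_{\mathbf{k}} U_1 e_{\mathbf{k}} = \alpha_{\mathbf{k}} e_{\mathbf{k}+\varepsilon_1} = T_1 e_{\mathbf{k}}$, confirming the polar decomposition. Substituting into the Aluthge formula then gives
\[
\widetilde{T}_1 e_{\mathbf{k}} = |T_1|^{\frac{1}{2}} U_1 |T_1|^{\frac{1}{2}} e_{\mathbf{k}} = \sqrt{\alpha_{\mathbf{k}}}\, |T_1|^{\frac{1}{2}} e_{\mathbf{k}+\varepsilon_1} = \sqrt{\alpha_{\mathbf{k}}\,\alpha_{\mathbf{k}+\varepsilon_1}}\, e_{\mathbf{k}+\varepsilon_1},
\]
as claimed, and the same argument applied to $T_2$ yields $\widetilde{T}_2 e_{\mathbf{k}} = \sqrt{\beta_{\mathbf{k}}\,\beta_{\mathbf{k}+\varepsilon_2}}\, e_{\mathbf{k}+\varepsilon_2}$.

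There is no genuine obstacle here; the computation is entirely elementary. The only point demanding a moment's care is the well-definedness of the partial isometry $U_1$ in the polar decomposition, which is why I would emphasize that the strict positivity of the weights forces $\ker T_1 = \{0\}$, so that $U_1$ is a genuine isometry (acting as a shift in the first coordinate) rather than merely a partial isometry with nontrivial initial space. Once this is in hand, both displayed identities are immediate.
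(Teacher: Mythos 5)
Your proof is correct and is exactly the routine verification the paper has in mind: the paper states Lemma \ref{CartAlu} without proof (treating it as standard, analogous to its one-line proof of Lemma \ref{PolarAlu}), and your computation of $|T_1|$ as the diagonal operator $e_{\mathbf{k}}\mapsto\alpha_{\mathbf{k}}e_{\mathbf{k}}$ and of $U_1$ as the coordinate shift, followed by substitution into $\widetilde{T}_1=|T_1|^{\frac{1}{2}}U_1|T_1|^{\frac{1}{2}}$, is the intended argument. Your remark that strict positivity of the weights makes $U_1$ a genuine isometry is a sensible point of care and fully consistent with the paper's standing assumption that the weight sequences are positive.
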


\begin{lemma}
\label{Lemma 1}\cite{CuYo7} Let $W_{(\alpha ,\beta )}$ be a commuting $2$%
-variable weighted shift. \ Then
\begin{eqnarray}
\widetilde{W}_{(\alpha ,\beta )} &\equiv &\left( \widetilde{T}_{1},%
\widetilde{T}_{2}\right) \text{ is commuting}  \notag \\
&\Longleftrightarrow &\alpha _{\mathbf{k+}\varepsilon _{2}}\alpha _{\mathbf{k%
}+\varepsilon _{1}+\varepsilon _{2}}=\alpha _{\mathbf{k+}\varepsilon
_{1}}\alpha _{\mathbf{k}+2\varepsilon _{2}}  \label{prop1eq}
\end{eqnarray}%
for all $\mathbf{k}\in \mathbb{Z}_{+}^{2}$.
\end{lemma}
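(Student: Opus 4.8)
The plan is to treat the toral Aluthge transform $\widetilde{W}_{(\alpha,\beta)}=(\widetilde{T}_1,\widetilde{T}_2)$ as a $2$-variable weighted shift in its own right and apply the commutativity criterion (\ref{commuting}) to it. By Lemma \ref{CartAlu}, $\widetilde{W}_{(\alpha,\beta)}$ has first-coordinate weights $\widetilde{\alpha}_{\mathbf{k}}:=\sqrt{\alpha_{\mathbf{k}}\alpha_{\mathbf{k}+\varepsilon_1}}$ and second-coordinate weights $\widetilde{\beta}_{\mathbf{k}}:=\sqrt{\beta_{\mathbf{k}}\beta_{\mathbf{k}+\varepsilon_2}}$. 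First I would invoke (\ref{commuting}) for these weights: $\widetilde{W}_{(\alpha,\beta)}$ is commuting if and only if $\widetilde{\beta}_{\mathbf{k}+\varepsilon_1}\widetilde{\alpha}_{\mathbf{k}}=\widetilde{\alpha}_{\mathbf{k}+\varepsilon_2}\widetilde{\beta}_{\mathbf{k}}$ for all $\mathbf{k}\in\mathbb{Z}_+^2$.

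Since all the weights are strictly positive, I would then square both sides to clear the radicals, obtaining
$$\beta_{\mathbf{k}+\varepsilon_1}\beta_{\mathbf{k}+\varepsilon_1+\varepsilon_2}\,\alpha_{\mathbf{k}}\alpha_{\mathbf{k}+\varepsilon_1}=\alpha_{\mathbf{k}+\varepsilon_2}\alpha_{\mathbf{k}+\varepsilon_1+\varepsilon_2}\,\beta_{\mathbf{k}}\beta_{\mathbf{k}+\varepsilon_2}.$$
The key move is to regroup each side so that the commutativity relation (\ref{commuting}) for $W_{(\alpha,\beta)}$ itself, namely $\beta_{\mathbf{k}+\varepsilon_1}\alpha_{\mathbf{k}}=\alpha_{\mathbf{k}+\varepsilon_2}\beta_{\mathbf{k}}$, appears as a factor: the left side carries the factor $\beta_{\mathbf{k}+\varepsilon_1}\alpha_{\mathbf{k}}$ and the right side carries $\alpha_{\mathbf{k}+\varepsilon_2}\beta_{\mathbf{k}}$. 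These are equal, so they cancel, leaving
$$\beta_{\mathbf{k}+\varepsilon_1+\varepsilon_2}\alpha_{\mathbf{k}+\varepsilon_1}=\alpha_{\mathbf{k}+\varepsilon_1+\varepsilon_2}\beta_{\mathbf{k}+\varepsilon_2}.$$

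Finally I would eliminate the remaining $\beta$-weights by applying (\ref{commuting}) at the shifted index $\mathbf{k}+\varepsilon_2$, which reads $\beta_{\mathbf{k}+\varepsilon_1+\varepsilon_2}\alpha_{\mathbf{k}+\varepsilon_2}=\alpha_{\mathbf{k}+2\varepsilon_2}\beta_{\mathbf{k}+\varepsilon_2}$; solving for $\beta_{\mathbf{k}+\varepsilon_1+\varepsilon_2}$ and substituting, the factor $\beta_{\mathbf{k}+\varepsilon_2}$ cancels and $\alpha_{\mathbf{k}+\varepsilon_2}$ clears, yielding exactly $\alpha_{\mathbf{k}+\varepsilon_2}\alpha_{\mathbf{k}+\varepsilon_1+\varepsilon_2}=\alpha_{\mathbf{k}+\varepsilon_1}\alpha_{\mathbf{k}+2\varepsilon_2}$, which is (\ref{prop1eq}). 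Because every operation performed, squaring, cancellation, and substitution, is reversible under the standing hypothesis that the weights are strictly positive, the chain of implications is in fact a chain of equivalences, so the biconditional follows. I do not expect a serious obstacle here; the only point requiring care is the index bookkeeping, namely applying (\ref{commuting}) at the correct indices ($\mathbf{k}$ versus $\mathbf{k}+\varepsilon_2$) so that the $\beta$-weights cancel cleanly and only $\alpha$-weights survive in the final identity.
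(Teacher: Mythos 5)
Your argument is correct: the paper states Lemma \ref{Lemma 1} as a quoted result from \cite{CuYo7} and gives no proof of its own, and your computation --- reading off the weights of $\widetilde{W}_{(\alpha,\beta)}$ from Lemma \ref{CartAlu}, applying the commutativity criterion (\ref{commuting}) to those weights, squaring, and cancelling via (\ref{commuting}) at $\mathbf{k}$ and at $\mathbf{k}+\varepsilon_{2}$ --- is the natural (and essentially the only) way to derive (\ref{prop1eq}), with every step reversible because the weights are strictly positive. No gaps.
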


\begin{lemma}
\label{Lemma 2}\cite{CuYo7} Consider a $2$-variable weighted shift $%
W_{(\alpha ,\beta )}\equiv \left( T_{1},T_{2}\right) $, and assume that $%
W_{(\alpha ,\beta )}$ is a commuting pair of hyponormal operators. \ Then so
is $\widehat{W}_{(\alpha ,\beta )}$.
\end{lemma}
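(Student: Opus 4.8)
The plan is to turn the statement into explicit inequalities on the weights of $\widehat{W}_{(\alpha,\beta)}$. Since $T_1^\ast T_1$ and $T_2^\ast T_2$ act diagonally on the canonical basis, so does $P=\sqrt{T_1^\ast T_1+T_2^\ast T_2}$, with $Pe_{\mathbf{k}}=\rho_{\mathbf{k}}e_{\mathbf{k}}$, where $\rho_{\mathbf{k}}:=\sqrt{\alpha_{\mathbf{k}}^2+\beta_{\mathbf{k}}^2}$; moreover $V_1e_{\mathbf{k}}=(\alpha_{\mathbf{k}}/\rho_{\mathbf{k}})e_{\mathbf{k}+\varepsilon_1}$ and $V_2e_{\mathbf{k}}=(\beta_{\mathbf{k}}/\rho_{\mathbf{k}})e_{\mathbf{k}+\varepsilon_2}$. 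Substituting these into the definition (\ref{Def-Alu1}) of the spherical Aluthge transform shows at once that $\widehat{W}_{(\alpha,\beta)}$ is again a $2$-variable weighted shift, with weights
$$
\widehat{\alpha}_{\mathbf{k}}=\alpha_{\mathbf{k}}\sqrt{\frac{\rho_{\mathbf{k}+\varepsilon_1}}{\rho_{\mathbf{k}}}},\qquad \widehat{\beta}_{\mathbf{k}}=\beta_{\mathbf{k}}\sqrt{\frac{\rho_{\mathbf{k}+\varepsilon_2}}{\rho_{\mathbf{k}}}}.
$$
With these formulas the lemma amounts to three weight conditions: commutativity of $\widehat{W}_{(\alpha,\beta)}$, and the hyponormality of each $\widehat{T}_i$, i.e. that $\widehat{\alpha}$ is nondecreasing in the first coordinate and $\widehat{\beta}$ is nondecreasing in the second.

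Commutativity is the easy step. Inserting the formulas above into the commutativity criterion $\widehat{\alpha}_{\mathbf{k}}\widehat{\beta}_{\mathbf{k}+\varepsilon_1}=\widehat{\beta}_{\mathbf{k}}\widehat{\alpha}_{\mathbf{k}+\varepsilon_2}$, one checks that every $\rho$-factor telescopes to the common quantity $\sqrt{\rho_{\mathbf{k}+\varepsilon_1+\varepsilon_2}/\rho_{\mathbf{k}}}$ on both sides, so the identity collapses to $\alpha_{\mathbf{k}}\beta_{\mathbf{k}+\varepsilon_1}=\beta_{\mathbf{k}}\alpha_{\mathbf{k}+\varepsilon_2}$, which is precisely (\ref{commuting}) for $W_{(\alpha,\beta)}$.

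The substantive part — and the step I expect to be the main obstacle — is the hyponormality of $\widehat{T}_1$, i.e. $\widehat{\alpha}_{\mathbf{k}}\le\widehat{\alpha}_{\mathbf{k}+\varepsilon_1}$. Clearing denominators, this is equivalent to
$$
\alpha_{\mathbf{k}}^2\,\rho_{\mathbf{k}+\varepsilon_1}^2\le\alpha_{\mathbf{k}+\varepsilon_1}^2\,\rho_{\mathbf{k}}\,\rho_{\mathbf{k}+2\varepsilon_1}.
$$
The difficulty is the geometric mean $\rho_{\mathbf{k}}\rho_{\mathbf{k}+2\varepsilon_1}$; one cannot appeal to log-convexity of $\rho$, which may fail under mere hyponormality. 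Instead I would bound it below by Cauchy--Schwarz, $\rho_{\mathbf{k}}\rho_{\mathbf{k}+2\varepsilon_1}\ge\alpha_{\mathbf{k}}\alpha_{\mathbf{k}+2\varepsilon_1}+\beta_{\mathbf{k}}\beta_{\mathbf{k}+2\varepsilon_1}$, and expand $\rho_{\mathbf{k}+\varepsilon_1}^2=\alpha_{\mathbf{k}+\varepsilon_1}^2+\beta_{\mathbf{k}+\varepsilon_1}^2$; it then suffices to prove the two inequalities
$$
\alpha_{\mathbf{k}}^2\alpha_{\mathbf{k}+\varepsilon_1}^2\le\alpha_{\mathbf{k}+\varepsilon_1}^2\,\alpha_{\mathbf{k}}\alpha_{\mathbf{k}+2\varepsilon_1},\qquad \alpha_{\mathbf{k}}^2\beta_{\mathbf{k}+\varepsilon_1}^2\le\alpha_{\mathbf{k}+\varepsilon_1}^2\,\beta_{\mathbf{k}}\beta_{\mathbf{k}+2\varepsilon_1}
$$
separately and add them. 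The first reduces to $\alpha_{\mathbf{k}}\le\alpha_{\mathbf{k}+2\varepsilon_1}$, immediate from the hyponormality of $T_1$.

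For the second inequality I would use the commutativity relation (\ref{commuting}) twice — at $\mathbf{k}$ and at $\mathbf{k}+\varepsilon_1$ — to trade the first-coordinate increments of $\beta$ for $\alpha$-weights in the adjacent row $k_2+1$: writing $\alpha_{\mathbf{k}}\beta_{\mathbf{k}+\varepsilon_1}=\alpha_{\mathbf{k}+\varepsilon_2}\beta_{\mathbf{k}}$ and $\alpha_{\mathbf{k}+\varepsilon_1}\beta_{\mathbf{k}+2\varepsilon_1}=\alpha_{\mathbf{k}+\varepsilon_1+\varepsilon_2}\beta_{\mathbf{k}+\varepsilon_1}$, the second inequality simplifies, after cancelling the common positive factors, to
$$
\alpha_{\mathbf{k}}\,\alpha_{\mathbf{k}+\varepsilon_2}\le\alpha_{\mathbf{k}+\varepsilon_1}\,\alpha_{\mathbf{k}+\varepsilon_1+\varepsilon_2}.
$$
This is the product of the two first-coordinate monotonicity inequalities $\alpha_{\mathbf{k}}\le\alpha_{\mathbf{k}+\varepsilon_1}$ and $\alpha_{\mathbf{k}+\varepsilon_2}\le\alpha_{\mathbf{k}+\varepsilon_1+\varepsilon_2}$, both furnished by the hyponormality of $T_1$ (applied in rows $k_2$ and $k_2+1$). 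This settles the hyponormality of $\widehat{T}_1$. The hyponormality of $\widehat{T}_2$ follows by the symmetric argument, interchanging the two coordinates and using the hyponormality of $T_2$ together with (\ref{commuting}); combined with the commutativity verified above, this gives $\widehat{W}_{(\alpha,\beta)}\in\mathfrak{H}_0$, as required.
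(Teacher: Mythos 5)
The paper itself gives no proof of this lemma: it is stated in the Appendix as a known auxiliary result, imported from \cite{CuYo7} (cf.\ \cite{CuYo6}), so there is no in-paper argument to compare yours against. Judged on its own, your proof is correct and complete. The reductions are all legitimate: $P$ is diagonal with eigenvalues $\rho_{\mathbf{k}}=\sqrt{\alpha_{\mathbf{k}}^2+\beta_{\mathbf{k}}^2}$ (all strictly positive, since the paper's weights are positive), your weight formulas for $\widehat{T}_1,\widehat{T}_2$ agree with the paper's Lemma \ref{PolarAlu}, the telescoping of the $\rho$-factors does reduce commutativity of $\widehat{W}_{(\alpha,\beta)}$ to (\ref{commuting}), and hyponormality of each $T_i$ on $\ell^2(\mathbb{Z}_+^2)$ is indeed equivalent to monotonicity of the weights along the $i$-th coordinate (direct sum of row, resp.\ column, shifts). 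The substantive step is handled cleanly: bounding $\rho_{\mathbf{k}}\rho_{\mathbf{k}+2\varepsilon_1}$ from below by Cauchy--Schwarz and splitting the resulting inequality into the $\alpha\alpha$- and $\alpha\beta$-pieces, the first of which is immediate from $\alpha_{\mathbf{k}}\le\alpha_{\mathbf{k}+2\varepsilon_1}$ and the second of which, after the two substitutions from (\ref{commuting}) and cancellation of $\beta_{\mathbf{k}}\beta_{\mathbf{k}+\varepsilon_1}>0$, becomes $\alpha_{\mathbf{k}}\alpha_{\mathbf{k}+\varepsilon_2}\le\alpha_{\mathbf{k}+\varepsilon_1}\alpha_{\mathbf{k}+\varepsilon_1+\varepsilon_2}$, a product of two instances of the hyponormality of $T_1$; I verified that the symmetric computation for $\widehat{T}_2$ goes through the same way. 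This is a self-contained, elementary argument that could stand in place of the external citation.
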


\begin{lemma}
\label{PolarAlu} Let $W_{(\alpha ,\beta )} \equiv \left(T_{1},T_{2}\right)$
be a $2$-variable weighted shift. \ Then
\begin{equation*}
\widehat{T}_{1}e_{\mathbf{k}}=\alpha_{\mathbf{k}} \frac{(\alpha_{\mathbf{k}+%
\mathbf{\epsilon}_1}^2+\beta_{\mathbf{k}+\mathbf{\epsilon}_1}^2)^{1/4}}{%
(\alpha_{\mathbf{k}}^2+\beta_{\mathbf{k}}^2)^{1/4}} e_{\mathbf{k}+\mathbf{%
\epsilon}_1}
\end{equation*}
and
\begin{equation*}
\widehat{T}_{2}e_{\mathbf{k}}=\beta_{\mathbf{k}} \frac{(\alpha_{\mathbf{k}+%
\mathbf{\epsilon}_2}^2+\beta_{\mathbf{k}+\mathbf{\epsilon}_2}^2)^{1/4}}{%
(\alpha_{\mathbf{k}}^2+\beta_{\mathbf{k}}^2)^{1/4}} e_{\mathbf{k}+\mathbf{%
\epsilon}_2}
\end{equation*}
for all $\mathbf{k} \in \mathbb{Z}_+^2$.
\end{lemma}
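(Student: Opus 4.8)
The plan is to reduce everything to a bookkeeping computation on the canonical orthonormal basis $\{e_{\mathbf{k}}\}$, since for a $2$-variable weighted shift every operator appearing in the definition of $\widehat{\mathbf{T}}$ is either diagonal or a weighted shift with respect to this basis. First I would record that $T_i^{\ast}T_i$ acts diagonally, namely $T_1^{\ast}T_1 e_{\mathbf{k}}=\alpha_{\mathbf{k}}^2 e_{\mathbf{k}}$ and $T_2^{\ast}T_2 e_{\mathbf{k}}=\beta_{\mathbf{k}}^2 e_{\mathbf{k}}$. Hence $P^2=T_1^{\ast}T_1+T_2^{\ast}T_2$ is diagonal with $P^2 e_{\mathbf{k}}=(\alpha_{\mathbf{k}}^2+\beta_{\mathbf{k}}^2)e_{\mathbf{k}}$, and by the functional calculus for positive diagonal operators, $\sqrt{P}\,e_{\mathbf{k}}=(\alpha_{\mathbf{k}}^2+\beta_{\mathbf{k}}^2)^{1/4}e_{\mathbf{k}}$.

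Second, I would extract the partial-isometry components $V_1,V_2$ from the polar decomposition (\ref{setting 2}), that is, from $T_i=V_iP$. Because the weights are strictly positive, $\ker P=\ker T_1\cap\ker T_2=\{0\}$, so $P$ is injective and $V_i=T_iP^{-1}$ on all of $\ell^2(\mathbb{Z}_+^2)$. A one-line computation then yields
\begin{equation*}
V_1 e_{\mathbf{k}}=\frac{\alpha_{\mathbf{k}}}{\sqrt{\alpha_{\mathbf{k}}^2+\beta_{\mathbf{k}}^2}}\,e_{\mathbf{k}+\varepsilon_1}
\quad\text{and}\quad
V_2 e_{\mathbf{k}}=\frac{\beta_{\mathbf{k}}}{\sqrt{\alpha_{\mathbf{k}}^2+\beta_{\mathbf{k}}^2}}\,e_{\mathbf{k}+\varepsilon_2}.
\end{equation*}

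Third, I would substitute into the definition (\ref{Def-Alu1}) of the spherical Aluthge transform, $\widehat{T}_i=\sqrt{P}\,V_i\,\sqrt{P}$, and track the scalar accumulated by applying the three factors to $e_{\mathbf{k}}$ from right to left. The inner $\sqrt{P}$ contributes $(\alpha_{\mathbf{k}}^2+\beta_{\mathbf{k}}^2)^{1/4}$; then $V_i$ shifts the index to $\mathbf{k}+\varepsilon_i$ while dividing by $\sqrt{\alpha_{\mathbf{k}}^2+\beta_{\mathbf{k}}^2}$; and the outer $\sqrt{P}$, now evaluated at the shifted index, contributes $(\alpha_{\mathbf{k}+\varepsilon_i}^2+\beta_{\mathbf{k}+\varepsilon_i}^2)^{1/4}$. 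Collecting the three factors for $i=1$ gives exactly $\alpha_{\mathbf{k}}(\alpha_{\mathbf{k}+\varepsilon_1}^2+\beta_{\mathbf{k}+\varepsilon_1}^2)^{1/4}/(\alpha_{\mathbf{k}}^2+\beta_{\mathbf{k}}^2)^{1/4}$, and symmetrically for $i=2$, which are precisely the claimed formulas.

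There is no deep obstacle here; the only point requiring care is that the two copies of $\sqrt{P}$ are evaluated at different indices, the inner one at $\mathbf{k}$ and the outer one at the shifted index $\mathbf{k}+\varepsilon_i$, so one must not cancel them naively against the $\sqrt{\alpha_{\mathbf{k}}^2+\beta_{\mathbf{k}}^2}$ coming from $V_i$. Keeping the evaluation index attached to each diagonal factor throughout makes the three scalars combine cleanly into the stated quotient of fourth roots.
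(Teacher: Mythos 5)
Your proposal is correct and is exactly the computation the paper has in mind: the published proof consists of the single line ``Straightforward from (\ref{Def-Alu1}),'' and your writeup simply fills in that routine verification (diagonal action of $P$ and $\sqrt{P}$ on the basis, $V_ie_{\mathbf{k}}=c_{\mathbf{k}}^{-1}T_ie_{\mathbf{k}}$ with $c_{\mathbf{k}}=\sqrt{\alpha_{\mathbf{k}}^2+\beta_{\mathbf{k}}^2}$, then composition of the three factors). The one caution you flag---that the inner and outer copies of $\sqrt{P}$ are evaluated at different indices---is indeed the only place where care is needed, and you handle it correctly.
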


\begin{proof}
Straightforward from (\ref{Def-Alu1}).
\end{proof}

\begin{lemma}
\label{Lemma 3} (\cite{AtPo}, \cite{CuYo7}) \ Any spherically quasinormal is
subnormal.
\end{lemma}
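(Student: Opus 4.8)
The plan is to reduce spherical quasinormality to the known subnormality of spherical isometries by means of a spectral (direct integral) decomposition. Throughout write $Q := \sum_{j} T_j^{\ast}T_j = P^2$. By the definition of spherical quasinormality each $T_i$ commutes with $Q$; taking adjoints and using $Q = Q^{\ast}$, each $T_i^{\ast}$ commutes with $Q$ as well. Consequently every spectral projection $E(\cdot)$ of the positive operator $Q$ lies in the commutant of the $\ast$-algebra generated by $T_1,\dots,T_n$. First I would record the trivial part: $\ker P = \bigcap_i \ker T_i$ reduces $\mathbf{T}$, and $\mathbf{T}|_{\ker P} = 0$ is normal, so it suffices to treat the restriction to $(\ker P)^{\perp}$.

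Next I would disintegrate. Using the spectral measure of $Q$ one writes $\mathcal{H} \cong \int^{\oplus}_{\sigma(Q)} \mathcal{H}_{\lambda}\, d\nu(\lambda)$, under which $Q$ becomes multiplication by $\lambda$ and, since each $T_i$ commutes with $E$, each $T_i$ decomposes as a measurable field $T_i \cong \int^{\oplus} T_i(\lambda)\, d\nu(\lambda)$. The identity $\sum_j T_j^{\ast}T_j = Q$ now reads $\sum_j T_j(\lambda)^{\ast}T_j(\lambda) = \lambda\, I_{\mathcal{H}_{\lambda}}$ for $\nu$-a.e.\ $\lambda$, and commutativity of the $T_i$ passes to the fibers, so $(T_1(\lambda),\dots,T_n(\lambda))$ is a commuting tuple. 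For $\lambda = 0$ the fiber is the zero (hence normal) tuple, while for $\lambda > 0$ the rescaled tuple $\bigl(\lambda^{-1/2}T_1(\lambda),\dots,\lambda^{-1/2}T_n(\lambda)\bigr)$ is a commuting spherical isometry.

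At this point I would invoke the classical fact that commuting spherical isometries are subnormal (\cite{Ath1}, \cite{EsPu2}; see the Remark following Theorem \ref{Quasinormal}). Choosing on each fiber the minimal normal extension $\mathbf{M}(\lambda) = (M_1(\lambda),\dots,M_n(\lambda))$ on a space $\mathcal{K}_{\lambda} \supseteq \mathcal{H}_{\lambda}$, the tuple $\lambda^{1/2}\mathbf{M}(\lambda)$ is a commuting normal extension of $(T_1(\lambda),\dots,T_n(\lambda))$. Reassembling, $\mathbf{N} := \int^{\oplus} \lambda^{1/2}\mathbf{M}(\lambda)\, d\nu(\lambda)$ acting on $\mathcal{K} := \int^{\oplus}\mathcal{K}_{\lambda}\, d\nu(\lambda)$ is a commuting normal tuple that extends $\mathbf{T}$, which is exactly subnormality.

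The main obstacle is the measurability of the field $\lambda \mapsto (\mathcal{K}_{\lambda}, \mathbf{M}(\lambda))$, which is needed to make the direct integral $\mathbf{N}$ a bona fide measurable normal tuple rather than a mere fiberwise assembly. I would handle this by using the \emph{canonical} minimal normal extension, built functorially from the fiber operators $T_i(\lambda)$ via the Bram--Halmos/Athavale positivity data, so that $\lambda \mapsto \mathbf{M}(\lambda)$ inherits measurability from the measurable field $\lambda \mapsto (T_1(\lambda),\dots,T_n(\lambda))$. Alternatively, one avoids disintegration by the polar route $T_i = V_i P = P V_i$: cancelling a factor of $P$ on the dense range $\Ran P$ shows that spherical quasinormality forces each $V_i$ to commute with $P$ (and with $P^{\ast}=P$) and $(V_1,\dots,V_n)$ to be a commuting spherical isometry, after which one extends $P$ to commute with the normal extension of $\mathbf{V}$. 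Either way, the measure-theoretic (or commutant-lifting) bookkeeping is the only nontrivial point; the conceptual content is entirely contained in the reduction to spherical isometries.
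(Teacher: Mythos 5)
The paper offers no proof of this lemma: it is stated in the Appendix as a known result and simply cited to \cite{AtPo} and \cite{CuYo7} (see also \cite{CuYo8}). Your second, ``polar route'' is essentially the argument of those references: spherical quasinormality means each $T_i$ commutes with $Q=P^2$, hence with $P=Q^{1/2}$ by functional calculus, so $T_i=V_iP=PV_i$ with $(V_1,\dots,V_n)$ restricting to a commuting spherical isometry on $(\ker P)^{\perp}$ (which reduces each $V_i$, since $V_i$ and $V_i^{\ast}$ both commute with $P$); Athavale's theorem then gives a normal (spherical unitary) extension $\mathbf{M}$ of $\mathbf{V}$, and the only remaining step is to extend $P$ to a positive operator $\widetilde{P}$ commuting with $\mathbf{M}$. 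You flag this as ``commutant-lifting bookkeeping,'' and it is worth noting that no general lifting theorem is needed: defining $\widetilde{P}M^{\ast\beta}h:=M^{\ast\beta}Ph$ on the minimal extension space and writing $P=C^{\ast}C$, $\|P\|I-P=D^{\ast}D$ with $C,D$ commuting with the $V_i$, the Bram-type form computation shows $0\le\widetilde{P}\le\|P\|I$, so $\widetilde{P}$ is well defined, bounded, and commutes with each $M_i^{\ast}$ by construction; then $N_i:=M_i\widetilde{P}$ is the desired commuting normal extension. Your first, direct-integral route is a genuinely different decomposition (fibering over $\sigma(Q)$ rather than factoring out $P$): it reduces to the same key input (subnormality of spherical isometries) but at the cost of assuming $\mathcal{H}$ separable and of verifying that $\lambda\mapsto(\mathcal{K}_{\lambda},\mathbf{M}(\lambda))$ is a measurable field of uniformly bounded normal tuples --- the uniform bound does hold, since the minimal normal extension of a spherical isometry is a spherical unitary, so $\|\lambda^{1/2}M_j(\lambda)\|\le\|Q\|^{1/2}$. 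Both routes are sound; the polar one is cleaner, free of separability hypotheses, and is the one implicit in the cited literature.
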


\begin{lemma}
\label{Lemma 4} (\cite{CuYo2}, \cite{Yo1}) \ Let $\mu $ be the Berger
measure of a subnormal $2$-variable weighted shift $W_{(\alpha ,\beta
)}\equiv \left( T_{1},T_{2}\right) $, and for $k_{2}\geq 0$ let $\xi
_{k_{2}} $ (resp. $\eta _{k_{1}}$) be the Berger measure of the associated $%
k_{2}$-th horizontal $1$-variable weighted shift $W_{\alpha ^{(k_{2})}}$
(resp. $W_{\beta ^{(k_{1})}}$). \ For every $k_{1},k_{2}\geq 0$ we have
\begin{equation}
\xi _{k_{2}+1}\ll \xi _{k_{2}}\text{ and }\eta _{k_{1}+1}\ll \eta _{k_{1}}%
\text{.}  \label{abs con}
\end{equation}
\end{lemma}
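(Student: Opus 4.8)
The plan is to realize each horizontal measure $\xi_{k_2}$ as an explicit density against a single fixed measure on $[0,a_1]$, and then read off the absolute continuity directly from a monotonicity property of the conditional moments. First I would disintegrate the Berger measure of $W_{(\alpha,\beta)}$ against its first coordinate, writing $d\mu(s,t)=d\Phi_s(t)\,d\mu^X(s)$, where $\mu^X$ is the marginal of $\mu$ on $[0,a_1]$ and $\{\Phi_s\}_{s}$ is a family of probability measures on $[0,a_2]$. Such a disintegration exists and is $\mu^X$-a.e. unique because we work on a compact (hence standard) measure space. For the parallel statement about $\eta_{k_1}$ I would instead disintegrate against the second coordinate; by symmetry it suffices to treat the $\xi$-case.

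Second, I would identify $\xi_{k_2}$ concretely. Using the path-independence of the moment formula (\ref{moment0}) along the nondecreasing path that goes up to $(0,k_2)$ and then right, one gets $\gamma_{k_1}(W_{\alpha^{(k_2)}})=\gamma_{(k_1,k_2)}/\gamma_{(0,k_2)}$. Combining this with Berger's theorem (\ref{Berger Theorem}) and the disintegration yields, for all $k_1\ge 0$,
$$
\int_0^{a_1}\! s^{k_1}\,d\xi_{k_2}(s)=\frac{1}{\gamma_{(0,k_2)}}\int_0^{a_1}\! s^{k_1}g_{k_2}(s)\,d\mu^X(s),\qquad g_{k_2}(s):=\int_0^{a_2}\! t^{k_2}\,d\Phi_s(t).
$$
Since $\xi_{k_2}$ is compactly supported, hence a determinate solution of its own Hamburger moment problem, uniqueness of the representing measure forces $d\xi_{k_2}(s)=\gamma_{(0,k_2)}^{-1}g_{k_2}(s)\,d\mu^X(s)$; note $\int g_{k_2}\,d\mu^X=\gamma_{(0,k_2)}$, so this is indeed a probability measure (and for $k_2=0$ it recovers $\xi_0=\mu^X$).

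Third, which is the crux, I would exploit the pointwise implication $g_{k_2}(s)=0\Rightarrow g_{k_2+1}(s)=0$. Indeed, for $k_2\ge 1$ the vanishing $\int t^{k_2}\,d\Phi_s(t)=0$ forces the probability measure $\Phi_s$ to concentrate at $t=0$, whence $g_{k_2+1}(s)=\int t^{k_2+1}\,d\Phi_s(t)=0$ as well; for $k_2=0$ we have $g_0\equiv 1$, so there is nothing to check. Consequently, if $E$ is a Borel set with $\xi_{k_2}(E)=0$, then $g_{k_2}=0$ holds $\mu^X$-a.e.\ on $E$, so by the implication $g_{k_2+1}=0$ $\mu^X$-a.e.\ on $E$, and therefore $\xi_{k_2+1}(E)=\gamma_{(0,k_2+1)}^{-1}\int_E g_{k_2+1}\,d\mu^X=0$. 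This is precisely $\xi_{k_2+1}\ll\xi_{k_2}$, and the vertical statement $\eta_{k_1+1}\ll\eta_{k_1}$ follows verbatim from the symmetric disintegration.

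The hard part will not be the absolute continuity itself, which is essentially formal once the density formula is in place, but rather the measure-theoretic bookkeeping: establishing the existence and measurability of the conditional family $\{\Phi_s\}$, and justifying rigorously the passage from ``all moments agree'' to ``the measures agree'' simultaneously for every $k_2$. Once $d\xi_{k_2}=\gamma_{(0,k_2)}^{-1}g_{k_2}\,d\mu^X$ is secured, the monotone vanishing of the conditional moments $g_{k_2}$ delivers the nested absolute continuity immediately.
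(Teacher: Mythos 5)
The paper does not prove Lemma \ref{Lemma 4}; it imports it from \cite{CuYo2} and \cite{Yo1}, whose arguments are precisely the disintegration-of-measure technique you describe. Your proof is correct and follows essentially that same route: the identity $d\xi_{k_2}=\gamma_{(0,k_2)}^{-1}g_{k_2}\,d\mu^X$ obtained from path-independence of the moments, determinacy of compactly supported moment sequences, and the disintegration $d\mu(s,t)=d\Phi_s(t)\,d\mu^X(s)$ is the key formula in those references, and your observation that $g_{k_2}(s)=0$ forces $\Phi_s=\delta_0$ (hence $g_{k_2+1}(s)=0$) correctly delivers the nested absolute continuity.
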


\begin{lemma}
\label{Lemma 5} (\cite{Bro}, \cite{Con}) \ An operator $T\in \mathcal{B}(\mathcal{H})$
with canonical polar decomposition $T=U|T|$ is a (pure) quasinormal operator if
and only if there exists a positive operator $A\in \mathcal{B}(\mathcal{R})$
with $\ker A=\left\{ 0\right\} $ such that $T\cong U_{+}\otimes A$ acting on $\ell^2(\mathbb{Z}_+)
\otimes \mathcal{R}$; thus, $U \cong U_{+} \otimes I$ and $|T| \cong I \otimes A$. \ Furthermore, up to a unitary equivalence, $A$ is uniquely determined.
\end{lemma}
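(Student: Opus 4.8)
The plan is to prove both implications, treating the forward (``only if'') direction as the substantive one. For the converse, I would start from $T\cong U_{+}\otimes A$ with $A\geq 0$ and $\ker A=\{0\}$ and simply compute: since $U_{+}^{\ast}U_{+}=I$ we get $T^{\ast}T\cong I\otimes A^{2}$, whence $T(T^{\ast}T)\cong U_{+}\otimes A^{3}\cong(T^{\ast}T)T$, so $T$ is quasinormal. For purity, note that $T$ is injective (a tensor product of the injective operators $U_{+}$ and $A$), that $T^{n}\cong U_{+}^{n}\otimes A^{n}$, and that $\ker A=\{0\}$ forces $\overline{\Ran A^{n}}=\mathcal{R}$; hence $\overline{\Ran T^{n}}\cong\bigoplus_{k\geq n}e_{k}\otimes\mathcal{R}$ and $\bigcap_{n}\overline{\Ran T^{n}}=\{0\}$. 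Any reducing subspace $\mathcal{M}$ on which $T$ is normal (and automatically injective) satisfies $\mathcal{M}=\overline{\Ran(T^{n}|_{\mathcal{M}})}\subseteq\overline{\Ran T^{n}}$ for every $n$, forcing $\mathcal{M}=\{0\}$.

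For the ``only if'' direction I would write the canonical polar decomposition $T=U|T|$ with $\ker U=\ker T=\ker|T|$. The first key step is that quasinormality makes $U$ and $|T|$ commute: since $T$ commutes with $T^{\ast}T=|T|^{2}$ it commutes with every continuous function of $|T|^{2}$, in particular with $|T|$; then $C:=U|T|-|T|U$ satisfies $C|T|=U|T|^{2}-|T|U|T|=0$, so $C$ vanishes on $\overline{\Ran|T|}=(\ker T)^{\perp}$, while on $\ker T=\ker U$ both terms of $C$ vanish, giving $C=0$. Taking adjoints yields $U^{\ast}|T|=|T|U^{\ast}$ as well, so $|T|$ lies in the commutant of $\{U,U^{\ast}\}$.

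The second key step extracts the shift structure from purity. Because $T$ is hyponormal, $\ker T\subseteq\ker T^{\ast}$, so $\ker T$ reduces $T$ and $T|_{\ker T}=0$ is normal; purity forces $\ker T=\{0\}$, hence $|T|$ is injective with dense range and $U$ is an isometry. I would then invoke the Wold decomposition $\mathcal{H}=\mathcal{H}_{u}\oplus\mathcal{H}_{s}$ of $U$, with $\mathcal{H}_{u}=\bigcap_{n}U^{n}\mathcal{H}$. Since $|T|$ commutes with $U$ and $U^{\ast}$, it leaves each $U^{n}\mathcal{H}$ invariant and so reduces $\mathcal{H}_{u}$; thus $\mathcal{H}_{u}$ reduces $T=U|T|$, and there $U$ is unitary and commutes with $|T|\geq 0$, making $T|_{\mathcal{H}_{u}}$ a product of a unitary and a commuting positive operator, i.e.\ normal. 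Purity again forces $\mathcal{H}_{u}=\{0\}$, so $U$ is a pure isometry. Setting $\mathcal{R}:=\ker U^{\ast}=\mathcal{H}\ominus U\mathcal{H}$, the Wold decomposition gives $\mathcal{H}\cong\ell^{2}(\mathbb{Z}_{+})\otimes\mathcal{R}$ via $U^{n}r\leftrightarrow e_{n}\otimes r$ and $U\cong U_{+}\otimes I_{\mathcal{R}}$. Under this identification $|T|$ commutes with $U_{+}\otimes I$ and $U_{+}^{\ast}\otimes I$; since $U_{+}$ is irreducible, $\{U_{+},U_{+}^{\ast}\}'=\mathbb{C}I$, so the relevant commutant is exactly $I\otimes\mathcal{B}(\mathcal{R})$ (alternatively, a direct block-matrix computation shows that commuting with both the shift and the co-shift forces a scalar-diagonal block-Toeplitz form). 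Hence $|T|\cong I\otimes A$ with $A\geq 0$ and $\ker A=\{0\}$ (from injectivity of $|T|$), and $T=U|T|\cong U_{+}\otimes A$. Uniqueness follows because $A$ is recovered intrinsically as the restriction of $|T|$ to the wandering subspace $\mathcal{R}=\mathcal{H}\ominus\overline{\Ran T}$, so it is determined up to unitary equivalence.

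I expect the main obstacle to be the second step: deducing from purity that $U$ has no unitary summand. This hinges on the observation (from the first step) that $|T|$ commutes with $U^{\ast}$ as well as $U$, so that the Wold subspace $\mathcal{H}_{u}$ reduces $T$, together with the elementary fact that a commuting product of a unitary and a positive operator is normal. Once $U$ is known to be a pure shift, the commutant identification of $|T|$ is routine.
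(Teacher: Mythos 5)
The paper does not actually prove Lemma \ref{Lemma 5}: it is stated in the Appendix as a known result quoted from Brown's 1953 paper and Conway's book, so there is no in-paper argument to compare against. Your proof is correct and is essentially the classical Brown--Conway argument: the converse by direct tensor computation together with the observation that $\bigcap_n \overline{\Ran T^n}=\{0\}$ kills any injective normal summand; the forward direction via (a) deducing $U|T|=|T|U$ (hence also $U^*|T|=|T|U^*$) from quasinormality, (b) using purity to eliminate $\ker T$ and the unitary part of the Wold decomposition of $U$, and (c) identifying the commutant of $\{U_+\otimes I,\ U_+^*\otimes I\}$ with $I\otimes\mathcal{B}(\mathcal{R})$ via irreducibility of $U_+$, which pins down $|T|\cong I\otimes A$. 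All of these steps check out, including the uniqueness claim, since $A$ is recovered as the restriction of $|T|$ to the wandering subspace $\mathcal{H}\ominus\overline{\Ran T}$. One small point worth making explicit: the paper defines purity by the nonexistence of a nonzero \emph{invariant} subspace on which $T$ restricts to a normal operator, whereas your converse-direction argument only rules out \emph{reducing} such subspaces. Since $U_+\otimes A$ is hyponormal, any invariant subspace with normal restriction automatically reduces it (the $(1,1)$ entry of the self-commutator in the corresponding $2\times 2$ block form is $-XX^*$, which must be positive), so your argument does cover the paper's definition, but the sentence bridging the two notions should be added.
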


\bigskip
{\bf  Acknowledgments}. \ The authors are deeply grateful to the referee for a careful reading of the paper and for several suggestions and edits that helped to improve the presentation. \ Some of the calculations in this paper were made with the software tool {\it Mathematica} \cite{Wol}.




\end{document}